\DeclareMathOperator{\supp}{supp}
\DeclareMathOperator{\dist}{dist}
\definecolor{myblue}{rgb}{0,0,0.6}     
\begin{document}

\newcommand{\rf}[1]{(\ref{#1})}
\newcommand{\mmbox}[1]{\fbox{\ensuremath{\displaystyle{ #1 }}}}	
\newcommand{\ri}{{\mathrm{i}}}\newcommand{\re}{{\mathrm{e}}}\newcommand{\rd}{\mathrm{d}}
\newcommand{\R}{\mathbb{R}}\newcommand{\Q}{\mathbb{Q}}\newcommand{\N}{\mathbb{N}}
\newcommand{\Z}{\mathbb{Z}}\newcommand{\C}{\mathbb{C}}\newcommand{\K}{{\mathbb{K}}}
\newcommand{\cA}{\mathcal{A}}\newcommand{\cB}{\mathcal{B}}\newcommand{\cC}{\mathcal{C}}
\newcommand{\cS}{\mathcal{S}}\newcommand{\cD}{\mathcal{D}}\newcommand{\cH}{\mathcal{H}}
\newcommand{\cI}{\mathcal{I}}
\newcommand{\cItilde}{\tilde{\mathcal{I}}}
\newcommand{\cIhat}{\hat{\mathcal{I}}}
\newcommand{\cIcheck}{\check{\mathcal{I}}}
\newcommand{\cIstar}{{\mathcal{I}^*}}
\newcommand{\cJ}{\mathcal{J}}\newcommand{\cM}{\mathcal{M}}\newcommand{\cP}{\mathcal{P}}
\newcommand{\cV}{{\mathcal V}}\newcommand{\cW}{{\mathcal W}}
\newcommand{\scrD}{\mathscr{D}}\newcommand{\scrS}{\mathscr{S}}\newcommand{\scrJ}{\mathscr{J}}
\newcommand{\sD}{\mathsf{D}}\newcommand{\sN}{\mathsf{N}}\newcommand{\sS}{\mathsf{S}}
\newcommand{\bs}[1]{\mathbf{#1}}
\newcommand{\bb}{\mathbf{b}}\newcommand{\bd}{\mathbf{d}}\newcommand{\bn}{\mathbf{n}}
\newcommand{\bp}{\mathbf{p}}\newcommand{\bP}{\mathbf{P}}\newcommand{\bv}{\mathbf{v}}
\newcommand{\bx}{\mathbf{x}}\newcommand{\by}{\mathbf{y}}\newcommand{\bz}{{\mathbf{z}}}
\newcommand{\bxi}{\boldsymbol{\xi}}\newcommand{\boldeta}{\boldsymbol{\eta}}
\newcommand{\ts}{\tilde{s}}\newcommand{\tGamma}{{\tilde{\Gamma}}}
\newcommand{\done}[2]{\dfrac{d {#1}}{d {#2}}}
\newcommand{\donet}[2]{\frac{d {#1}}{d {#2}}}
\newcommand{\pdone}[2]{\dfrac{\partial {#1}}{\partial {#2}}}
\newcommand{\pdonet}[2]{\frac{\partial {#1}}{\partial {#2}}}
\newcommand{\pdonetext}[2]{\partial {#1}/\partial {#2}}
\newcommand{\pdtwo}[2]{\dfrac{\partial^2 {#1}}{\partial {#2}^2}}
\newcommand{\pdtwot}[2]{\frac{\partial^2 {#1}}{\partial {#2}^2}}
\newcommand{\pdtwomix}[3]{\dfrac{\partial^2 {#1}}{\partial {#2}\partial {#3}}}
\newcommand{\pdtwomixt}[3]{\frac{\partial^2 {#1}}{\partial {#2}\partial {#3}}}
\newcommand{\bnabla}{\boldsymbol{\nabla}}
\newcommand{\eps}{\varepsilon}
\newcommand{\im}[1]{{\rm Im}\left[#1\right]}
\newcommand{\ol}[1]{\overline{#1}}
\newcommand{\ord}[1]{\mathcal{O}\left(#1\right)}
\newcommand{\oord}[1]{o\left(#1\right)}
\newcommand{\Ord}[1]{\Theta\left(#1\right)}
\newcommand{\norm}[2]{\left\|#1\right\|_{#2}}
\newtheorem{thm}{Theorem}[section]
\newtheorem{lem}[thm]{Lemma}
\newtheorem{defn}[thm]{Definition}
\newtheorem{prop}[thm]{Proposition}
\newtheorem{cor}[thm]{Corollary}
\newtheorem{rem}[thm]{Remark}
\newtheorem{conj}[thm]{Conjecture}
\newtheorem{ass}[thm]{Assumption}
\newtheorem{example}[thm]{Example} 
\newcommand{\tH}{\widetilde{H}}
\newcommand{\Hze}{H_{\rm ze}} 	
\newcommand{\uze}{u_{\rm ze}}		
\newcommand{\dimH}{{\rm dim_H}}
\newcommand{\dimB}{{\rm dim_B}}
\newcommand{\IntClosOm}{\mathrm{int}(\overline{\Omega})}
\newcommand{\IntClosOmOne}{\mathrm{int}(\overline{\Omega_1})}
\newcommand{\IntClosOmTwo}{\mathrm{int}(\overline{\Omega_2})}
\newcommand{\Ccomp}{C^{\rm comp}}
\newcommand{\tCcomp}{\tilde{C}^{\rm comp}}
\newcommand{\uC}{\underline{C}}
\newcommand{\utC}{\underline{\tilde{C}}}
\newcommand{\oC}{\overline{C}}
\newcommand{\otC}{\overline{\tilde{C}}}
\newcommand{\capcomp}{{\rm cap}^{\rm comp}}
\newcommand{\Capcomp}{{\rm Cap}^{\rm comp}}
\newcommand{\tcapcomp}{\widetilde{{\rm cap}}^{\rm comp}}
\newcommand{\tCapcomp}{\widetilde{{\rm Cap}}^{\rm comp}}
\newcommand{\hcapcomp}{\widehat{{\rm cap}}^{\rm comp}}
\newcommand{\hCapcomp}{\widehat{{\rm Cap}}^{\rm comp}}
\newcommand{\tcap}{\widetilde{{\rm cap}}}
\newcommand{\tCap}{\widetilde{{\rm Cap}}}
\newcommand{\ccap}{{\rm cap}}
\newcommand{\ucap}{\underline{\rm cap}}
\newcommand{\uCap}{\underline{\rm Cap}}
\newcommand{\cCap}{{\rm Cap}}
\newcommand{\ocap}{\overline{\rm cap}}
\newcommand{\oCap}{\overline{\rm Cap}}
\DeclareRobustCommand
{\mathringbig}[1]{\accentset{\smash{\raisebox{-0.1ex}{$\scriptstyle\circ$}}}{#1}\rule{0pt}{2.3ex}}
\newcommand{\cirH}{\mathringbig{H}}
\newcommand{\cirHs}{\mathringbig{H}{}^s}
\newcommand{\cirHt}{\mathringbig{H}{}^t}
\newcommand{\cirHm}{\mathringbig{H}{}^m}
\newcommand{\cirHzero}{\mathringbig{H}{}^0}
\newcommand{\deO}{{\partial\Omega}}
\newcommand{\OO}{{(\Omega)}}
\newcommand{\Rn}{{(\R^n)}}
\newcommand{\Id}{{\mathrm{Id}}}
\newcommand{\gap}{\mathrm{Gap}}
\newcommand{\ggap}{\mathrm{gap}}
\newcommand{\isom}{{\xrightarrow{\sim}}}
\newcommand{\half}{{1/2}}
\newcommand{\mhalf}{{-1/2}}
\newcommand{\baro}{{\overline{\Omega}}} 
\newcommand{\inter}{{\mathrm{int}}}

\newcommand{\Hsp}{H^{s,p}}
\newcommand{\Htq}{H^{t,q}}
\newcommand{\tHsp}{{{\widetilde H}^{s,p}}}
\newcommand{\SP}{\ensuremath{(s,p)}}
\newcommand{\Xsp}{X^{s,p}}
\newcommand{\dd}{{d}}
\newcommand{\pp}{{p_*}}
\newcommand{\Rnn}{\R^{n_1+n_2}}
\newcommand{\Tr}{{\mathrm{Tr}}}

\title{On the maximal Sobolev regularity\\ of distributions supported by subsets of Euclidean space}
\author{D.\ P.\ Hewett\footnotemark[1], A.\ Moiola\footnotemark[2]}

\renewcommand{\thefootnote}{\fnsymbol{footnote}}
\footnotetext[1]{Mathematical Institute, University of Oxford, Radcliffe Observatory Quarter, Woodstock Road, Oxford, OX2 6GG, UK. 
Current address: Department of Mathematics,
University College London,
Gower Street, London, WC1E 6BT, UK.
E-mail: \texttt{d.hewett@ucl.ac.uk}
}
\footnotetext[2]{Department of Mathematics and Statistics, University of Reading, Whiteknights PO Box 220, Reading RG6 6AX, UK. Email: 
\texttt{a.moiola@reading.ac.uk}}
\maketitle
\renewcommand{\thefootnote}{\arabic{footnote}}

\begin{abstract} 
This paper concerns the following question: given a subset $E$ of $\R^n$ with empty interior and an integrability parameter $1<p<\infty$, what is the maximal regularity $s\in\R$ for which there exists a non-zero distribution in the Bessel potential Sobolev space $H^{s,p}(\R^n)$ that is supported in $E$?
For sets of zero Lebesgue measure we apply well-known results on set capacities from potential theory to characterise the maximal regularity in terms of the Hausdorff dimension of $E$, sharpening previous results. 
Furthermore, we provide a full classification of all possible maximal regularities, as functions of $p$, together with the sets of values of $p$ for which the maximal regularity is attained, and construct concrete examples for each case.
Regarding sets with positive measure, for which the maximal regularity is non-negative, we present new lower bounds on the maximal Sobolev regularity supported by certain fat Cantor sets, which we obtain both by capacity-theoretic arguments, and by direct estimation of the Sobolev norms of characteristic functions.
We collect several results characterising the regularity that can be achieved on certain special classes of sets, such as $d$-sets, boundaries of open sets, and Cartesian products, of relevance for applications in differential and integral equations.
\\[3mm]
\textbf{Keywords}: Bessel potential Sobolev spaces, $(s,p)$-nullity, polar set, set of uniqueness, capacity, Hausdorff dimension, Cantor sets.
\\[3mm]
\textbf{Mathematical Subject Classification 2010}: 
46E35 (Primary), 
28A78, 
28A80, 
31B15. 
\end{abstract}

\section{Introduction}\label{sec:intro}
This paper concerns the following question \textbf{Q}:\\ 
\emph{Given a subset $E$ of $\R^n$ with empty interior,  an integrability parameter $1<p<\infty$, and a regularity parameter $s\in\R$, does there exist a non-zero distribution in the Bessel potential Sobolev space $H^{s,p}(\R^n)$ 
which is supported in $E$?}

This question has arisen repeatedly in the course of the first author's recent investigations \cite{CoercScreen,Ch:13,CoercScreen2,ScreenPaper} into the analysis of acoustic scattering by planar screens with rough (e.g.\ fractal) boundaries. Indeed, for such scattering problems one of the factors determining the unique solvability of the Helmholtz equation boundary value problems (BVPs) with Dirichlet or Neumann boundary conditions, at least as they are classically posed, and the associated boundary integral equation (BIE) formulations, is whether the boundary of the screen (the screen being viewed as a relatively open subset of the plane) can support non-zero elements of $H^{\pm 1/2,2}(\R^2)$ \cite{CoercScreen,ScreenPaper}.

More generally, the question \textbf{Q} pertains to a number of other fundamental questions about function spaces on subsets of $\R^n$ defined in terms of the spaces $H^{s,p}(\R^n)$. 
We give a simple illustration of this in Proposition \ref{thm:Hs_equality_closed} below, where we show how \textbf{Q} is related to the question of whether $H^{s,p}_{F_1}=H^{s,p}_{F_2}$ for closed sets $F_1\neq F_2\subset \R^n$. 
In \cite{CoercScreen,ChaHewMoi:13,Hs0paper}, where our focus is on the case $p=2$, we demonstrate the relevance of \textbf{Q} for understanding when $H^{s,2}_0(\Omega)=H^{s,2}(\Omega)$ and when $\tH^{s,2}(\Omega)=H^{s,2}_{\ol\Omega}$, for a given open set $\Omega\subset \R^n$, and also for understanding when $\tH^{s,2}(\Omega_1)=\tH^{s,2}(\Omega_2)$ for open sets $\Omega_1\neq\Omega_2\subset \R^n$.
(Here, for closed $F\subset\R^n$, $H^{s,p}_F:=\{u\in H^{s,p}(\R^n):\supp u\subset F\}$, and for open $\Omega\subset \R^n$, $H^{s,p}(\Omega):=\{u|_\Omega:u\in H^{s,p}(\R^n)\}$, $H^{s,p}_0(\Omega) = \overline{C^\infty_0(\Omega)}^{H^{s,p}(\Omega)}$ and $\tH^{s,p}(\Omega) = \overline{C^\infty_0(\Omega)}^{H^{s,p}(\R^n)}$.)

Upon consulting the classical function space literature we found a number of disparate partial results relevant to the question \textbf{Q} (in particular we note \cite{Ca:67,Li:67a,Li:67b,Po:72,Po:72a,Ad:74,AdHe,Triebel97FracSpec,Tri:08,Maz'ya}), but no single convenient and up-to-date reference in which these results are collected in a form easily accessible to applied and numerical analysts. The aim of this paper is to provide such a reference, which we hope will be of use to those interested in problems involving PDEs and integral equations on rough (i.e.,\ non-Lipschitz) domains. But this is not simply a review paper. 
We also present a number of apparently new results, along with a range of concrete examples and counterexamples illustrating them. The key new results we contribute include:
\begin{itemize}
\item a sharpening of the relationship between maximal Sobolev regularity and fractal dimension (cf.\ Theorem \ref{thm:NullityHausdorff} and Remark \ref{rem:TriebelHausdorff});
\item a complete characterisation of all possible maximal regularity behaviours for sets with zero Lebesgue measure (cf.\ Corollary \ref{cor:NullitySets} and the concrete examples in Theorem \ref{thm:CantorZoo});
\item new results on the Sobolev regularity of the characteristic functions of certain fat Cantor sets with positive Lebesgue measure (Propositions \ref{prop:FatCantorCapacity}--\ref{prop:FatCantor}). 
\end{itemize}
While the paper does not provide a definitive answer to \textbf{Q} in its full generality, we hope that the results we provide, along with the open questions that we pose, will stimulate further research.

Function space experts might correctly observe that the question \textbf{Q} could be posed in a much more general setting, for instance in the context of the Besov and Triebel--Lizorkin spaces $B^s_{pq}(\R^n)$ and $F^s_{pq}(\R^n)$ \cite{Triebel83ThFS,Maz'ya,AdHe,Triebel97FracSpec}, of which $H^{s,p}(\R^n)=F^s_{p2}(\R^n)$ is a special case. 
Our decision to restrict attention to the classical Bessel potential Sobolev spaces $H^{s,p}(\R^n)$ 
(sometimes referred to as ``fractional Sobolev spaces'', ``Liouville spaces'' or ``Lebesgue spaces'') 
is made for two reasons. First, it allows a relatively simple and accessible presentation: the proofs of many of our results make use of classical nonlinear potential theoretic results on set capacities and Bessel potentials already available e.g.\ in \cite{AdHe,Maz'ya}, allowing us to avoid any discussion of more intricate theories such as atomic and quarkonial decompositions which are typically employed in the modern function space literature to analyse the spaces $B^s_{pq}(\R^n)$ and $F^s_{pq}(\R^n)$ \cite{Triebel83ThFS,Maz'ya,AdHe,Triebel97FracSpec}. 
Second, the spaces $H^{s,p}(\R^n)$ are sufficient for a very large part of the study of linear elliptic BVPs and BIEs, which are the focus of attention for example in the classic monographs \cite{LiMaI} and \cite{ChPi} and in the much more recent book by McLean \cite{McLean} that has become the standard reference for the theory of BIE formulations of BVPs for strongly elliptic systems. In such applications the focus is usually on the case $p=2$, but since the potential theoretic results we cite from \cite{AdHe,Maz'ya} are valid for any $1<p<\infty$, it seems natural to present results for this general case wherever possible.

The structure of the paper is as follows. In \S\ref{sec:MainResults} we review some basic definitions, introduce the concepts of ``$(s,p)$-nullity'' and the ``nullity threshold'' of a set $E\subset\R^n$ (which will provide a framework within which to study question \textbf{Q}), and state our main results. Sets with zero and positive Lebesgue measure require different analyses, we study them in \S\ref{subsec:ResultsSL0} and \S\ref{subsec:ResultsSG0} respectively. In \S\ref{sec:Capacity} we collect a number of results relating to certain set capacities from nonlinear potential theory, which we use to prove the results of \S\ref{sec:MainResults}. In \S\ref{sec:Domains} we provide concrete examples and counterexamples to illustrate our general results. In \S\ref{sec:conclusion} we offer some conclusions and highlight the key open problems arising from our investigations.

\tableofcontents
\addcontentsline{toc}{subsection}{Contents}

\section{Main results}
\label{sec:MainResults}

\subsection{Preliminaries}
Before stating our main results we fix our notational conventions. Given $n\in \N$, let $\scrD=\scrD(\R^n)$ denote the space of compactly supported (real- or complex-valued) smooth test functions on~$\R^n$. 
For any open set $\Omega\subset \R^n$ let
$\scrD(\Omega):=\{u\in\scrD:\supp{u}\subset\Omega\}$, let $\scrD^*(\Omega)$ denote the associated space of distributions (anti-linear continuous functionals on $\mathscr{D}(\Omega)$), and let $L^1_{\rm loc}(\Omega)\subset \scrD^*(\Omega)$ denote the space of locally integrable functions on $\Omega$; 
for brevity we write $\scrD^*=\scrD^*(\R^n)$ and $L^1_{\rm loc} = L^1_{\rm loc}(\R^n)$. Similarly for $1<p<\infty$ we write $L^p=L^p\Rn$ and $L^p_{\rm loc}=L^p_{\rm loc}\Rn$, and denote by $p'$ the H\"older conjugate of $p$, i.e.\ the number $1<p'<\infty$ such that $1/p + 1/p'=1$. For any set $E\subset\R^n$ we denote the complement of $E$ by $E^c:=\R^n\setminus E$, and the closure of $E$ by $\overline{E}$. Let $\emptyset$ denote the empty set. Given $\bx\in\R^n$ and $\eps>0$ let $B_\eps(\bx)$ denote the open ball of radius $\eps$ centred at $\bx$. 
Let $\mathscr{S}$ denote the Schwartz space of rapidly decaying smooth test functions on $\R^n$, and $\mathscr{S}^*$ the dual space of tempered distributions (anti-linear continuous functionals on $\mathscr{S}$).
For $u\in \mathscr{S}$ we define the Fourier transform $\hat{u}={\cal F} u\in \mathscr{S}$ and its inverse $\check{u}={\cal F}^{-1} u\in \mathscr{S}$ by 
\begin{align*}
\hat{u}(\bxi):= \frac{1}{(2\pi)^{n/2}}\int_{\R^n}\re^{-\ri \bxi\cdot \bx}u(\bx)\,\rd \bx , \;\; \bxi\in\R^n, \quad
\check{u}(\bx) := \frac{1}{(2\pi)^{n/2}}\int_{\R^n}\re^{\ri \bxi\cdot \bx}u(\bxi)\,\rd \bxi , \;\;\bx\in\R^n.
\end{align*}
We define the Bessel potential operator $\cJ_s$ on $\mathscr{S}$, for $s\in\R$, by $\cJ_s := {\cal F}^{-1}\cM_s{\cal F}$, where $\cM_s$ represents multiplication by $(1+|\bxi|^2)^{s/2}$.
We extend these definitions to $\mathscr{S}^*$ in the usual way: 
\begin{align*}
\hat{u}(v) := u(\check{v}),\quad
\check{u}(v) := u(\hat{v}),\quad \cM_su(v) := u(\cM_s v), \quad
(\cJ_s u)(v) := u(\cJ_s v),
\qquad u\in \mathscr{S}^*, \, v\in \mathscr{S},
\end{align*}
and note that for $u\in \mathscr{S}^*$ it holds that $\widehat{\cJ_s u} = \cM_s\hat{u}$. 

For $s\in\R$ and $1<p<\infty$ the Bessel potential Sobolev space $H^{s,p}(\R^n)$ (abbreviated throughout to $H^{s,p}$, except in Appendix \ref{s:TensorProduct} where different dimensions $n$ are considered) is defined by
\begin{align*}
H^{s,p}:=\left\{u\in \mathscr{S}^* \,:\, \cJ_s u \in L^p\right\},
\quad \textrm{with }
\|u\|_{H^{s,p}} := \|\cJ_s u\|_{L^p}.
\end{align*}
Note that in the special case $p=2$, the norm $\|u\|_{H^{s,2}}$ can be realised using Plancherel's theorem as
\begin{align}
\label{eqn:Hs2NormFourier}
\|u\|_{H^{s,2}} = \left(\int_{\R^n}(1+|\bxi|^2)^{s}|\hat{u}(\bxi)|^2\,\rd \bxi\right)^{1/2}.
\end{align}

Other commonly used notation for $H^{s,p}$ includes $H^s_p$ (cf.\ \cite{Maz'ya,Triebel83ThFS}) and $L^{s,p}$ (cf.\ \cite{AdHe}). In relation to the wider function space literature we recall that (cf.\ e.g.\ \cite[\S2.5.6]{Triebel83ThFS}) $H^{s,p}=F^s_{p2}$ with equivalent norms, where $F^s_{pq}$ are the Triebel--Lizorkin spaces. For $s\geq0$, let $W^{s,p}\subset L^p$ be the classical Sobolev--Slobodeckij--Gagliardo space defined in terms of weak derivatives (cf.\ e.g.\ \cite[pp.~73--74]{McLean}). Then for $s\in \N_0$ it holds that $H^{s,p}=W^{s,p}$ with equivalent norms \cite[\S2.3.5]{Triebel83ThFS} (in particular, $H^{0,p}=L^p$ with equal norms). For $p=2$ this result extends to all $s\geq 0$ \cite[Theorem 3.16]{McLean}. For $p\neq 2$ and $0<s\notin\N$ it holds that $W^{s,p}=B^s_{pp}$ with equivalent norms \cite[\S2.2.2]{Triebel83ThFS} (here $B^s_{pq}$ are the Besov spaces), so that (by \cite[\S2.3.2]{Triebel83ThFS} and \cite[Theorem 2.12(c)]{Triebel78ITFSDO}) $W^{s,p}\subsetneqq H^{s,p}$ for $1<p<2$ and 
$H^{s,p}\subsetneqq W^{s,p}$ for $2<p<\infty$.

We recall some basic properties of $H^{s,p}$ that will be useful later. 
It is well known that $\scrD$ is dense in $H^{s,p}$, 
and that the following embeddings are continuous with dense image: \cite[\S2.7.1]{Triebel83ThFS} 
\begin{align}
\label{eq:Embedding}
H^{t,q} \subset H^{s,p}, \qquad 1<q\le p<\infty, \quad t-s\geq n\left(\frac{1}{q}-\frac{1}{p}\right)\ge0.
\end{align}
For distributions with compact support a more general embedding result holds. 
Given a closed set $F\subset \R^n$, define the closed subspace $H^{s,p}_F\subset H^{s,p}$ by
\begin{equation*} 
H_F^{s,p} :=\big\{u\in H^{s,p}: \supp(u) \subset F\big\},
\end{equation*}
where the support of a distribution $u\in\scrD$ is defined in the usual way, namely as the largest closed subset $\Lambda\subset\R^n$ for which $u(\phi)=0$ for every $\phi\in\scrD(\Lambda^c)$ (see e.g.\ \cite[p.\ 66]{McLean}). 
Then, since pointwise multiplication by a fixed element of $\scrD$ defines a bounded linear operator from $H^{s,q}$ to $H^{s,p}$ for any $1<p\leq q<\infty$ (see e.g.\ \cite[Lemma 4.6.2]{Triebel78ITFSDO}), for any compact $K\subset \R^n$ the following embedding is continuous (in particular this holds for $s=t$ and $1<p\le q<\infty$):
\begin{align}
\label{eq:EmbeddingCompact}
H^{t,q}_K \subset H^{s,p}_K, \quad t-s\geq \max\left\lbrace n\left(\frac{1}{q}-\frac{1}{p}\right),0 \right\rbrace.
\end{align}
The dual space of $H^{s,p}$ can be isometrically realised as the space $H^{-s,p'}$  
by the duality pairing
\begin{align*}
\langle u,v \rangle_{H^{-s,p'}\times H^{s,p}} = \langle\cJ_{-s}u,\cJ_s v \rangle_{L^{p'}\times L^p},
\end{align*}
which in the special case $p=2$ can be realised using Plancherel's theorem as
\begin{align*}
\langle u,v \rangle_{H^{-s,2}\times H^{s,2}} = \int_{\R^n}\hat{u}(\bxi)\overline{\hat{v}(\bxi)}\,\rd \bxi.
\end{align*}

When $s>n/p$, elements of $H^{s,p}$ are continuous functions by the Sobolev embedding theorem \cite[Theorem 1.2.4]{AdHe}. At the other extreme, for any $\bx_0\in\R^n$, the Dirac delta function, defined as $\delta_{\bx_0}(\phi)=\overline{\phi(\bx_0)}$ for $\phi\in\scrD\Rn$ to fit our convention of using anti-linear functionals, satisfies
\begin{equation}\label{eq:delta}
\delta_{\bx_0}\in H^{s,p}\qquad \text{if and only if}\qquad s<-n/p'.
\end{equation}
Finally, we note that part (d) of Theorem 1 in \cite[\S2.4.2]{Triebel78ITFSDO} allows the spaces $\Hsp$ to be arranged in interpolation scales.
For $s_0,s_1\in \R$, $1<p_0,p_1<\infty$ and $0<\theta<1$,  
\begin{equation}\label{eq:Interpolation}
\textrm{if }\quad  s=(1-\theta)s_0 + \theta s_1 \quad \textrm{and } \quad  \frac1p=\frac{1-\theta}{p_0}+\frac\theta{p_1},
\quad \text{then} \quad  \Hsp=[H^{s_0,p_0},H^{s_1,p_1}]_\theta,
\end{equation}
where $[\cdot,\cdot]_\theta$ denotes the space of exponent $\theta$ obtained with the complex interpolation method (see \cite[\S1.9]{Triebel78ITFSDO}), and equality of spaces holds with equivalent norms.
Thus, if the spaces $\Hsp$ are represented by points in the $(1/p,s)$-plane, then straight segments constitute interpolation scales. 

\subsection{\texorpdfstring{$(s,p)$}{(s,p)}-Nullity}
\label{subsec:nullity}
We now introduce the concept of $(s,p)$-nullity, which will be the focus of our studies.
\begin{defn}
\label{def:nullity}
Given $1<p<\infty$ and $s\in\R$ we say that a set $E\subset\R^n$ is \emph{$(s,p)$-null} if $H^{s,p}_{F}=\{0\}$ for every closed set $F\subset E$.
\end{defn}
In other words, a set $E\subset\R^n$ is $(s,p)$-null if and only if there are no non-zero elements of $H^{s,p}$ supported in $E$. 

\begin{rem}
\label{rem:Compact}
Clearly, if $F$ is closed then $F$ is $(s,p)$-null if and only if $H^{s,p}_F=\{0\}$. 
Note also that the Definition \ref{def:nullity} can be equivalently stated with ``closed'' replaced by ``compact''. Indeed, if $0\neq u\in H^{s,p}$ with $\supp u\subset E$ then $0\neq \phi u\in H^{s,p}$ is compactly supported in $E$ for any $\phi\in\scrD$ such that $\phi(\bx)\neq 0$ for some $\bx\in\supp u$ (cf.\ the proof of Proposition \ref{thm:NullityOfUnions}\rf{ll}).
\end{rem}

While our terminology ``$(s,p)$-null'' appears to be new, the concept it describes has been studied previously, apparently first by H\"{o}rmander and Lions in relation to properties of Sobolev spaces normed by Dirichlet integrals \cite{HoLi:56}, and then subsequently by a number of other authors in relation to the removability of singularities for elliptic partial differential operators \cite{Li:67a,Maz'ya}, and to the approximation of functions by solutions of the associated elliptic PDEs \cite{Po:72}.
For integer $s<0$ the concept of $(s,p)$-nullity is referred to (in the special case $p=2$) as $(-s)$-polarity in \cite[D\'efinition~2]{HoLi:56}, ``$p'$-$(-s)$ polarity'' in \cite{Li:67a} and ``$(p',-s)$-polarity'' in \cite[\S 13.2]{Maz'ya}. 
A related notion is discussed in the more general context of the spaces $B^s_{pq}$ in \cite[\S17]{Triebel97FracSpec} (see Remark \ref{rem:TriebelHausdorff}). 
For $s>0$, our notion of $(s,p)$-nullity is closely related to the concept of ``sets of uniqueness'' considered in \cite[\S11.3]{AdHe} and \cite[p.~692]{Maz'ya} (for integer $s$); this relationship is discussed in \S\ref{subsec:SOU}. 
For $s>0$ and $E$ with empty interior, the concept of nullity coincides with the concept of $(s,p)$-stability, discussed in \cite[\S11.5]{AdHe}. 

The reason why Maz'ya \cite{Maz'ya} uses two different terminologies (polarity and set of uniqueness) for the positive and negative order spaces is not made clear in \cite{Maz'ya}, but this may be due to the fact that Maz'ya works primarily with the spaces $W^{s,p}$, where the positive order spaces are defined using weak derivatives, and the negative order spaces are defined by duality. By contrast, in the Bessel potential framework of the current paper, the spaces $H^{s,p}$ are defined in the same way for all $s\in\R$, so that it seems natural to define the notion of ``negligibility'' in the same way for all $s\in\R$. Our decision to introduce the terminology ``$(s,p)$-nullity'' instead of using ``$(p',-s)$-polarity'' was made simply for clarity (personally we find it more natural to say that a set which does not support an $H^{s,p}$ distribution is ``$(s,p)$-null'' rather than ``$(p',-s)$-polar''). But the difference is purely semantic, so readers familiar with the concept of polarity may read ``$(p',-s)$-
polar'' for ``$(s,p)$-null'' throughout.

The following lemma collects some elementary facts about $(s,p)$-nullity.
\begin{lem}
\label{lem:nullity1}
Let $1<p,q<\infty$, $s,t\in\R$ and $E\subset\R^n$.
 \begin{enumerate}[(i)]
\item \label{aa}If $E$ is $(s,p)$-null and $E'\subset E$ then $E'$ is $(s,p)$-null.
\item \label{bb}If $E$ is $(s,p)$-null and $t\geq s + \max\{n(1/q-1/p),0\}$ then $E$ is $(t,q)$-null.
\item \label{cc}If $E$ is $(s,p)$-null then $E$ has empty interior.
\item \label{dd}If $s>n/p$ then $E$ is $(s,p)$-null if and only if $E$ has empty interior.
\item \label{qq} $E$ is $(0,p)$-null if and only if $\underline{m}(E)=0$, where $\underline{m}$ denotes inner Lebesgue measure (cf.\ Remark~\ref{rem:CapMeasure}).
\item \label{jj}For $s<-n/p'$ there are no non-empty $(s,p)$-null sets.
\item \label{nn} Let $1<p_0,p_1<\infty$ and $s_0,s_1\in\R$. If there exists $0\neq u\in H^{s_0,p_0}\cap H^{s_1,p_1}$ with $\supp u\subset E$, then $E$ is not $(s,p)$-null for $(s,p)$ defined as in \rf{eq:Interpolation}, for every $0<\theta<1$. 
\end{enumerate}
\end{lem}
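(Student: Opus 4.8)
The plan is to obtain all seven items directly from Definition~\ref{def:nullity} and the elementary facts about $H^{s,p}$ collected above; no potential theory is needed, and several items are essentially immediate.

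I would first dispatch the four ``direct'' items. For (i): any closed $F\subset E'$ is a closed subset of $E$, so $H^{s,p}_F=\{0\}$ by hypothesis, hence $E'$ is $(s,p)$-null. For (iii): if $\inter E\neq\emptyset$ then $E$ contains a closed ball $\overline{B_\eps(\bx)}$, and any nonzero $\phi\in\scrD$ with $\supp\phi\subset B_\eps(\bx)$ lies in $H^{s,p}$ (since $\scrD\subset H^{s,p}$) and is supported in a closed subset of $E$, contradicting $(s,p)$-nullity. For (vi): for $s<-n/p'$ and any $\bx_0\in E$, the Dirac mass $\delta_{\bx_0}$ is, by \rf{eq:delta}, a nonzero element of $H^{s,p}$ supported on the closed set $\{\bx_0\}\subset E$, so $E$ is not $(s,p)$-null. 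For (vii): the classical continuous inclusion $X_0\cap X_1\hookrightarrow[X_0,X_1]_\theta$ valid for any interpolation couple (see e.g.\ \cite[\S1.9]{Triebel78ITFSDO}), applied with $X_j=H^{s_j,p_j}$ together with the identification \rf{eq:Interpolation}, puts the given $u$ in $H^{s,p}$; since $u\neq0$ and $\supp u$ is a closed subset of $E$, nullity of $E$ fails at $(s,p)$.

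For the remaining three I would argue as follows. Item (ii): by Remark~\ref{rem:Compact} it suffices to show $H^{t,q}_K=\{0\}$ for every compact $K\subset E$; the hypothesis $t-s\ge\max\{n(1/q-1/p),0\}$ is precisely the condition under which \rf{eq:EmbeddingCompact} yields $H^{t,q}_K\subset H^{s,p}_K$, and $H^{s,p}_K=\{0\}$ because $K$ is a closed subset of the $(s,p)$-null set $E$. Item (iv): one implication is (iii); conversely, if $\inter E=\emptyset$ and $s>n/p$, then for any closed $F\subset E$ and $u\in H^{s,p}_F$, the Sobolev embedding makes $u$ continuous, and for a continuous function $\supp u=\overline{\{u\neq0\}}$, so the open set $\{u\neq0\}$ is contained in $F$, which has empty interior; hence $\{u\neq0\}=\emptyset$, i.e.\ $u\equiv0$, so $H^{s,p}_F=\{0\}$. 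Item (v): since $H^{0,p}=L^p$ with equal norms, $(0,p)$-nullity of $E$ says exactly that no nonzero $L^p$ function is supported in a closed subset of $E$. If $\underline{m}(E)=0$, then any closed $F\subset E$ is Lebesgue measurable with $m(F)=0$ (by inner regularity, since every compact subset of $F$ lies in $E$), and a function whose distributional support lies in a null set vanishes a.e.; conversely, if $\underline{m}(E)>0$ choose a compact $K\subset E$ with $m(K)>0$ and note that $\chi_K\in L^p\setminus\{0\}$ has support contained in $K\subset E$, since $\chi_K$ vanishes on the open set $K^c$.

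The only points requiring a little care are the measure-theoretic bookkeeping in (v) --- that $\supp\chi_K\subset K$, and that ``$\supp u\subset F$ with $m(F)=0$'' forces $u=0$ in $L^p$, both of which reduce to the definition of distributional support as the complement of the largest open set on which the distribution vanishes --- and the citation for the intersection-into-interpolation-space embedding used in (vii). I do not anticipate any genuine obstacle: the content of the lemma is essentially bookkeeping around Definition~\ref{def:nullity}.
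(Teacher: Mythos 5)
Your proof is correct and follows essentially the same route as the paper's (much terser) proof: each item is reduced to Definition~\ref{def:nullity} together with the embeddings \rf{eq:Embedding}--\rf{eq:EmbeddingCompact}, the Sobolev embedding theorem, the fact \rf{eq:delta} about $\delta_{\bx_0}$, and the interpolation identity \rf{eq:Interpolation}. You have merely written out the details (reduction to compact supports via Remark~\ref{rem:Compact} for (ii), the measure-theoretic bookkeeping for (v), and the standard inclusion $X_0\cap X_1\hookrightarrow[X_0,X_1]_\theta$ for (vii)) that the paper leaves implicit.
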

\begin{proof}
\rf{aa} and \rf{bb} follow straight from the definition of $(s,p)$-nullity, the standard embeddings \rf{eq:Embedding} and \rf{eq:EmbeddingCompact}, and the boundedness on $H^{s,p}$ of pointwise multiplication by elements of $\scrD$.
\rf{cc} If $E$ has non-empty interior then one can trivially construct a non-zero element of $\scrD\subset H^{s,p}$ supported inside $E$. 
\rf{dd} follows from \rf{cc} and the Sobolev embedding theorem. 
\rf{qq} follows from the fact that a closed set supports a non-zero $L^p$ function if and only if it has non-zero measure.
\rf{jj} follows from \eqref{eq:delta}, and \rf{nn} follows from \rf{eq:Interpolation}.
\end{proof}

Lemma \ref{lem:nullity1} immediately implies the following proposition.
\begin{prop}
\label{prop:NullityThreshold}
Fix $1<p<\infty$. For every $E\subset\R^n$ with empty interior there exists 
$$s_E(p)\in[-n/p',n/p]$$
such that $E$ is $(s,p)$-null for $s>s_E(p)$ and not $(s,p)$-null for $s<s_E(p)$. 
We call $s_E(p)$ the \emph{nullity threshold} of $E$ for the integrability parameter $p$. 
\end{prop}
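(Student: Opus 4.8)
The plan is to define the threshold as an infimum of admissible regularity exponents and then read off every claimed property directly from Lemma~\ref{lem:nullity1}. Assume $E\ne\emptyset$ (if $E=\emptyset$ then $E$ is vacuously $(s,p)$-null for every $s\in\R$, so no finite number with the stated dichotomy exists, and this degenerate case is tacitly excluded). Set
\[
A_E(p):=\{s\in\R:E\text{ is }(s,p)\text{-null}\},\qquad s_E(p):=\inf A_E(p),
\]
so that the task reduces to showing (a) $A_E(p)$ is upward closed with $\inf A_E(p)\in[-n/p',n/p]$, and (b) this infimum exhibits the asserted threshold behaviour.

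First I would record that $(s,p)$-nullity is monotone in $s$: specialising Lemma~\ref{lem:nullity1}\rf{bb} to $q=p$ (the correction term $\max\{n(1/p-1/p),0\}$ then vanishing) shows that if $E$ is $(s,p)$-null and $t\ge s$, then $E$ is $(t,p)$-null; hence $A_E(p)$ is closed upwards. Next, since $E$ has empty interior, Lemma~\ref{lem:nullity1}\rf{dd} gives $(n/p,\infty)\subseteq A_E(p)$, so $A_E(p)\ne\emptyset$ and $s_E(p)\le n/p$; and since $E\ne\emptyset$, Lemma~\ref{lem:nullity1}\rf{jj} gives $A_E(p)\cap(-\infty,-n/p')=\emptyset$, so $s_E(p)\ge-n/p'$. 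Thus $A_E(p)$ is a ray with left endpoint $s_E(p)\in[-n/p',n/p]$. The dichotomy is then immediate from the definition of the infimum together with upward closure: if $s>s_E(p)$ there is $s'\in A_E(p)$ with $s'<s$, whence $E$ is $(s,p)$-null by monotonicity; if $s<s_E(p)$ then $s\notin A_E(p)$, i.e.\ $E$ is not $(s,p)$-null.

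I do not expect a genuine obstacle here: the proposition is essentially bookkeeping on top of Lemma~\ref{lem:nullity1}. The only points that warrant a moment's attention are that the embeddings \rf{eq:Embedding}--\rf{eq:EmbeddingCompact} (packaged in Lemma~\ref{lem:nullity1}\rf{bb}) are precisely what makes nullity monotone in $s$ for fixed $p$, and the exclusion of the empty set noted above. I would also remark explicitly that nothing is being claimed about the borderline exponent $s=s_E(p)$ itself --- whether or not $E$ is $(s_E(p),p)$-null is exactly the attainment question treated by the later, more substantive results.
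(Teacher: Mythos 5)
Your proof is correct and matches the paper's intent exactly: the paper gives no explicit argument, stating only that the proposition is an immediate consequence of Lemma~\ref{lem:nullity1}, and your write-up simply fills in the routine details (monotonicity from part~\rf{bb}, the two endpoint bounds from parts~\rf{dd} and~\rf{jj}, and the infimum dichotomy). Your side remarks about excluding $E=\emptyset$ and about the borderline exponent being left open are both accurate and consistent with the paper's conventions.
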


Our aim in this paper is to investigate the following three questions:
\begin{itemize}
\item[\textbf{Q1:}] Given $1<p<\infty$ and $E\subset \R^n$ with empty interior, can we determine $s_E(p)$?
\item[\textbf{Q2:}] For which functions $f:(1,\infty)\to[-n,n]$ does there exist $E\subset\R^n$ such that $f(p)=s_E(p)$ for all $p\in(1,\infty)$?
\item[\textbf{Q3:}]
Under what conditions on $E$ and $p$ is $E$ ``threshold null'' (i.e.\ $(s_E(p),p)$-null)? 
\end{itemize}
Our (partial) answers to these questions are summarised in \S\ref{sec:conclusion}. To state some of our results it will be useful to introduce the ``nullity set'' and ``threshold nullity set'' of a set $E\subset \R^n$, defined by
\begin{align}
\label{eqn:NullitySet}
\mathcal N_E&:=\big\{(s,p)\in\R\times(1,\infty)\text{ s.t.\ $E$ is $(s,p)$-null}\big\},\\
\label{eqn:ThresholdNullitySet}
\mathcal T_E&:=\big\{p\in (1,\infty) \text{ s.t.\ $E$ is $(s_E(p),p)$-null}\big\}.
\end{align}

Our attempts to answer questions \textbf{Q1}--\textbf{Q3} will make extensive use of the relationship between $(s,p)$-nullity and certain set capacities from classical potential theory. The following key theorem is stated in \cite[Theorem 13.2.2]{Maz'ya} for the case where $s$ is a negative integer, but Maz'ya's proof in fact works for all $s\in\R$. 
We note that this result is actually a special case of a more general result proved in \cite[Lemma 1]{Li:67a} (where the result is attributed to Grusin \cite{Gr:62}). 
The inner capacity $\underline{\rm Cap}$ appearing in the theorem is defined in \S\ref{sec:Capacity} below.
\begin{thm}[{\cite[Theorem 13.2.2]{Maz'ya}, \cite[Lemma 1]{Li:67a}}]
\label{thm:NullityCapEquiv}
Let $1<p<\infty$ and $s\in\R$. Then $E\subset\R^n$ is $(s,p)$-null if and only if $\underline{\rm Cap}{}_{-s,p'}(E)=0$.
\end{thm}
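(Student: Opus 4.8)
The plan is to follow the argument of Maz'ya \cite[Theorem 13.2.2]{Maz'ya} (due in greater generality to Grusin, via \cite[Lemma 1]{Li:67a}), verifying that it uses only the Bessel-potential structure of $H^{s,p}$ and Banach-space duality, so that nothing is special about $s$ being a negative integer. First I would reduce to compact sets: since, by its definition in \S\ref{sec:Capacity}, $\underline{\rm Cap}_{-s,p'}(E)$ is the supremum of $\cCap_{-s,p'}(K)$ over compact $K\subset E$, we have $\underline{\rm Cap}_{-s,p'}(E)=0$ iff $\cCap_{-s,p'}(K)=0$ for every compact $K\subset E$; and by Remark~\ref{rem:Compact}, $E$ is $(s,p)$-null iff $H^{s,p}_K=\{0\}$ for every compact $K\subset E$. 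So it suffices to show, for compact $K$, that $H^{s,p}_K=\{0\}$ if and only if $\cCap_{-s,p'}(K)=0$.

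The second step is a functional-analytic reformulation. Using the isometric duality $(H^{-s,p'})^*=H^{s,p}$, and the fact (routine from the definition of support, since on $\scrD$ the duality pairing is the distributional one) that a distribution pairs to zero with every $\phi\in\scrD$ whose support lies in $K^c$ precisely when its own support lies in $K$, one sees that $H^{s,p}_K$ is exactly the annihilator in $H^{s,p}$ of the subspace $\scrD(K^c)\subset H^{-s,p'}$. By the Hahn--Banach theorem, a subspace of a Banach space is dense if and only if its annihilator is trivial, so $H^{s,p}_K=\{0\}$ if and only if $\scrD(K^c)$ is dense in $H^{-s,p'}$ --- i.e.\ if and only if $K$ is ``removable'' for $H^{-s,p'}$.

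The third, and main, step identifies this density condition with vanishing capacity. For $s<0$ set $\alpha:=-s>0$ and $q:=p'$, so that $H^{-s,p'}=H^{\alpha,q}$, and recall $\cCap_{\alpha,q}(K)=\inf\{\|f\|_{L^q}^q:f\ge0,\ G_\alpha*f\ge1\text{ on a neighbourhood of }K\}$, where $G_\alpha\ge0$ is the Bessel kernel, for which $G_\alpha*f=\cJ_{-\alpha}f$ and hence $\|G_\alpha*f\|_{H^{\alpha,q}}=\|f\|_{L^q}$. If $\cCap_{\alpha,q}(K)=0$, admissible $f_j\ge0$ with $\|f_j\|_{L^q}\to0$ yield potentials $u_j:=G_\alpha*f_j\in H^{\alpha,q}$ with $u_j\ge1$ near $K$ and $\|u_j\|_{H^{\alpha,q}}\to0$; composing with a fixed smooth $\theta$ with $\theta\equiv1$ on $[1,\infty)$, $\theta(0)=0$, $0\le\theta\le1$, gives $w_j:=\theta\circ u_j\in H^{\alpha,q}$ with $w_j\equiv1$ near $K$ and $\|w_j\|_{H^{\alpha,q}}\to0$, so that for any $v\in\scrD$ the functions $v(1-w_j)$ vanish near $K$ and converge to $v$ in $H^{\alpha,q}$ (multiplication by $v\in\scrD$ being bounded on $H^{\alpha,q}$); mollifying with a small enough parameter keeps the approximants in $\scrD(K^c)$, and since $\scrD$ is dense in $H^{\alpha,q}$ we conclude $\scrD(K^c)$ is dense. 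Conversely, if $\scrD(K^c)$ is dense, take $\psi\in\scrD$ with $\psi\equiv1$ near $K$ and $\phi_j\in\scrD(K^c)$ with $\phi_j\to\psi$; then $g_j:=\psi-\phi_j\in\scrD$ satisfies $g_j\equiv1$ near $K$ and $\|g_j\|_{H^{\alpha,q}}\to0$, and since $G_\alpha*|\cJ_\alpha g_j|\ge|G_\alpha*\cJ_\alpha g_j|=|g_j|\ge1$ near $K$, the nonnegative function $|\cJ_\alpha g_j|\in L^q$ is admissible for $\cCap_{\alpha,q}(K)$, giving $\cCap_{\alpha,q}(K)\le\|\cJ_\alpha g_j\|_{L^q}^q=\|g_j\|_{H^{\alpha,q}}^q\to0$. (One could instead deduce one implication from the dual, measure-theoretic, description of Bessel capacity.) The range $s\ge0$ is degenerate: there $\cCap_{-s,p'}$ reduces, under the conventions of \S\ref{sec:Capacity} for $\cCap_{\alpha,p'}$ with $\alpha\le0$, to a statement about inner Lebesgue measure ($s=0$) or emptiness of the interior ($s>0$), consistently with Lemma~\ref{lem:nullity1}\rf{qq},\rf{dd} and the embedding $H^{-s,p'}\subset L^{p'}$.

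The main obstacle is the composition (superposition) estimate $\|\theta\circ u_j\|_{H^{\alpha,q}}\lesssim\|u_j\|_{H^{\alpha,q}}$ used above, together with the routine a.e.\ versus quasi-everywhere bookkeeping hidden in the capacity definition. For Bessel order $0<\alpha\le1$ it follows from the classical boundedness on $H^{\alpha,q}$ of composition with a Lipschitz function vanishing at $0$; for $\alpha>1$ one needs sharper superposition theorems on Bessel potential spaces, or one bypasses the truncation entirely by working with capacitary potentials, which already satisfy the bound $\le1$ quasi-everywhere. It also remains to verify that the final mollification genuinely keeps the approximating functions supported in the open set $K^c$ (immediate, since $K$ is compact), and --- as the statement asserts --- to confirm that Maz'ya's argument nowhere uses $s\in\Z_{<0}$: it does not, since it is expressed entirely via $\cJ_s$, duality, and multiplication/composition operators, all available for arbitrary real $s$.
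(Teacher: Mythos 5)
Your overall architecture is the right one, and it is essentially the route the paper indicates: the reduction to compact sets via Remark \ref{rem:Compact} and the definition of the inner capacity, followed by the Hahn--Banach/annihilator identification of $H^{s,p}_K=\{0\}$ with the density of $\scrD(K^c)$ in $H^{-s,p'}$, is exactly the content of Theorem \ref{thm:NullityDensityEquiv} (Maz'ya's Theorem 13.2.1), through which the paper says the proof passes. The problem is in your third step, where you silently replace the capacity actually appearing in the statement, namely $\Capcomp_{-s,p'}$ as defined in \rf{Capdefn} (infimum of $\|u\|_{H^{-s,p'}}^{p'}$ over $u\in\scrD$ with $u=1$ in a neighbourhood of $K$), by the Bessel-kernel capacity $\inf\{\|f\|_{L^{q}}^{q}: f\ge 0,\ G_\alpha*f\ge 1 \text{ near } K\}$. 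The latter is a realisation of the lower-case capacity $\ccap_{\alpha,q}$ (cf.\ Remark \ref{rem:SvsD}), and identifying its null sets with those of $\cCap_{\alpha,q}$ is the nontrivial Theorem \ref{thm:CapEquiv}, valid only for $\alpha\ge 0$. This substitution is what creates the obstacle you flag --- the superposition estimate $\|\theta\circ u_j\|_{H^{\alpha,q}}\lesssim\|u_j\|_{H^{\alpha,q}}$, which you concede you cannot justify for $\alpha>1$ --- and that obstacle is genuinely unresolved in your write-up. It is also entirely avoidable: with the definition \rf{Capdefn}, $\Capcomp_{-s,p'}(K)=0$ hands you directly a sequence $u_j\in\scrD$ with $u_j=1$ near $K$ and $\|u_j\|_{H^{-s,p'}}\to 0$, whence $v(1-u_j)\in\scrD(K^c)$ converges to $v$ in $H^{-s,p'}$ for every $v\in\scrD$ (boundedness of multiplication by the fixed function $v$), and density follows with no truncation, no positivity, and no mollification. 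Your converse direction in fact already constructs exactly the admissible functions $g_j=\psi-\phi_j$ needed for \rf{Capdefn}; the detour through $G_\alpha*|\cJ_\alpha g_j|$ is superfluous.

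The second genuine defect is your treatment of $s\ge 0$. Your step 3 is carried out only for $\alpha=-s>0$, and you dispose of $s\ge 0$ by asserting that the condition $\uCap_{-s,p'}(E)=0$ ``reduces to emptiness of the interior'' when $s>0$. That is false for $0<s\le n/p$: Polking's sets (Theorem \ref{thm:PolkingCheese}) are compact with empty interior yet not $(n/p,p)$-null, so the theorem you are proving forces $\Capcomp_{-n/p,p'}$ of such a set to be strictly positive; were your claimed reduction correct, it would contradict the very statement under proof. The point the paper makes in attributing the result to Maz'ya ``for all $s\in\R$'' is precisely that the duality-plus-density argument is uniform in $s$: the definition \rf{Capdefn} makes sense for every real order, multiplication by a fixed element of $\scrD$ is bounded on every Bessel potential space, and neither implication uses the sign of $s$. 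Replace your third step by this uniform argument and delete the case split, and the proof is complete.
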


Maz'ya's proof goes via the following intermediate result, which we state for future reference, since it provides another useful characterisation of $(s,p)$-nullity for closed sets.
\begin{thm}[{\cite[Theorem 13.2.1]{Maz'ya}}]
\label{thm:NullityDensityEquiv}
Let $1<p<\infty$ and $s\in\R$. Then a closed set $F\subset\R^n$ is $(s,p)$-null if and only if $\scrD(F^c)$ is dense in $H^{-s,p'}$.
\end{thm}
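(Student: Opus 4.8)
The plan is to deduce this from the duality $(H^{-s,p'})^{*}\cong H^{s,p}$ together with a Hahn--Banach argument. Recall the elementary Banach space fact that a linear subspace $V$ of a normed space $X$ is dense if and only if the zero functional is the only element of $X^{*}$ vanishing on $V$. Applying the duality recorded above with $(s,p)$ replaced by $(-s,p')$, the dual of $X:=H^{-s,p'}$ is isometrically realised as $H^{s,p}$, the action of $u\in H^{s,p}$ on $v\in H^{-s,p'}$ being given by the pairing $\langle v,u\rangle_{H^{-s,p'}\times H^{s,p}}$ (up to a complex conjugate, owing to the anti-linear convention for functionals). Taking $V:=\scrD(F^{c})$, which is a linear subspace of $H^{-s,p'}$ since $\scrD\hookrightarrow H^{t,q}$ continuously for all $t,q$, density of $\scrD(F^{c})$ in $H^{-s,p'}$ is thus equivalent to the statement that the only $u\in H^{s,p}$ with $\langle\phi,u\rangle_{H^{-s,p'}\times H^{s,p}}=0$ for every $\phi\in\scrD(F^{c})$ is $u=0$.

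The next step is to identify this annihilator condition with a support condition. A short computation from the definitions of $\cJ_{\pm s}$ and of the $L^{p'}\times L^{p}$ pairing (or, alternatively, a remark on how the realization $(H^{s,p})^{*}=H^{-s,p'}$ is constructed) shows that for $\phi\in\scrD$ and $u\in H^{s,p}$ one has $\langle\phi,u\rangle_{H^{-s,p'}\times H^{s,p}}=\overline{u(\phi)}$, where $u(\phi)$ denotes the action of the distribution $u$ on $\phi$. Hence $u\in H^{s,p}$ annihilates $\scrD(F^{c})$ exactly when $u(\phi)=0$ for every $\phi\in\scrD(F^{c})$, which by the very definition of the support of a distribution is precisely the condition $\supp u\subset F$, i.e.\ $u\in H^{s,p}_{F}$.

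Combining the two steps gives the chain of equivalences: $\scrD(F^{c})$ is dense in $H^{-s,p'}$ $\iff$ no nonzero $u\in H^{s,p}$ annihilates $\scrD(F^{c})$ $\iff$ no nonzero $u$ lies in $H^{s,p}_{F}$ $\iff$ $H^{s,p}_{F}=\{0\}$ $\iff$ $F$ is $(s,p)$-null, the last equivalence being the observation in Remark~\ref{rem:Compact}. Reading the chain in both directions yields the claimed ``if and only if''.

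I do not anticipate a genuine obstacle, since the argument is entirely soft analysis. The points that require care are: using the full strength of ``isometrically realised as'', namely that \emph{every} bounded functional on $H^{-s,p'}$ arises from an element of $H^{s,p}$, so that the annihilator of $\scrD(F^{c})$ in the dual is exactly $H^{s,p}_{F}$ and nothing larger; verifying continuity of the inclusion $\scrD\hookrightarrow H^{-s,p'}$, so that elements of $\scrD(F^{c})$ are legitimate test vectors for the duality pairing; and keeping track of the complex conjugates produced by the convention that distributions act anti-linearly, which affect none of the conclusions but should be stated correctly.
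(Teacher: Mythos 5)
Your argument is correct. The paper does not actually supply a proof of this theorem---it is quoted from Maz'ya \cite[Theorem 13.2.1]{Maz'ya}---but your Hahn--Banach/annihilator argument, using the isometric realisation $(H^{-s,p'})^*\cong H^{s,p}$ and the identity $\langle\phi,u\rangle_{H^{-s,p'}\times H^{s,p}}=\overline{u(\phi)}$ for $\phi\in\scrD$ to convert the annihilator condition into $\supp u\subset F$, is precisely the standard proof underlying the cited result, and all the points you flag (surjectivity of the duality map, continuity of $\scrD\hookrightarrow H^{-s,p'}$, the anti-linear conventions) are handled correctly.
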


Theorem \ref{thm:NullityCapEquiv}, combined with the classical potential theoretic results developed e.g.\ in \cite{AdHe,Maz'ya}, and summarised in \S\ref{sec:Capacity} below, will underpin the proofs of many of our results about $(s,p)$-nullity, including part \rf{mm} of the following proposition, the proof of which is given in \S\ref{sec:Capacity}.
\begin{prop}
\label{thm:NullityOfUnions}
Let $1<p<\infty$ and $s\in\R$.
\begin{enumerate}[(i)]
\item \label{ll} 
If $E,F\subset \R^n$ are both $(s,p)$-null and $F$ has no limit points in $E\setminus F$ (which holds, for example, if $F$ is closed),
then $E\cup F$ is $(s,p)$-null. 
In particular, a finite union of $(s,p)$-null closed sets is $(s,p)$-null.
\item \label{mm} 
For $s\leq 0$, a countable union of $(s,p)$-null Borel sets is $(s,p)$-null.
\end{enumerate}
\end{prop}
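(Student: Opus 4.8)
\textbf{Proof proposal for Proposition \ref{thm:NullityOfUnions}.}

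The plan is to prove the two parts by rather different means: part \rf{ll} by a direct distributional argument (no capacity theory needed), and part \rf{mm} by appealing to Theorem \ref{thm:NullityCapEquiv} together with the countable subadditivity of the relevant capacity. For part \rf{ll}, the key observation is that if $0\neq u\in H^{s,p}$ with $\supp u\subset E\cup F$, I want to split $u$ into a piece supported (essentially) in $E$ and a piece supported in $F$, and derive a contradiction with the $(s,p)$-nullity of each. The hypothesis that $F$ has no limit points in $E\setminus F$ means that every point of $\supp u\cap(E\setminus F)$ has a neighbourhood disjoint from $F$. Pick $\bx\in\supp u$. If $\bx\in F$ is not an issue we handle the easy case; the substantive case is $\bx\in E\setminus F$, where we choose $\phi\in\scrD$ with $\phi(\bx)\neq0$ and $\supp\phi\cap F=\emptyset$ (possible by the limit-point hypothesis), so that $0\neq\phi u\in H^{s,p}$ (nonzero because $\phi$ does not vanish at a point of $\supp u$; this uses the standard fact that multiplication by $\scrD$ is bounded on $H^{s,p}$, as already invoked in the excerpt) and $\supp(\phi u)\subset\supp u\cap\supp\phi\subset(E\cup F)\cap F^c\subset E$. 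This contradicts $E$ being $(s,p)$-null. If instead no point of $\supp u$ lies in $E\setminus F$, then $\supp u\subset F$, contradicting $(s,p)$-nullity of $F$. Hence $u=0$. The ``in particular'' statement for finite unions of closed sets follows by induction, since a closed $F$ trivially has no limit points outside itself; and if $F$ is closed then $E\cup F$ closed is not needed because we are working with the definition of nullity via all closed subsets — one should note that an arbitrary closed subset $G\subset E\cup F$ decomposes via $G\cap \overline{E\setminus F}$ and $G\cap F$, or more simply, just apply the displayed argument directly to any $u\in H^{s,p}_G$.

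For part \rf{mm}, let $E=\bigcup_{j\in\N}E_j$ with each $E_j$ Borel and $(s,p)$-null, and $s\leq0$. By Theorem \ref{thm:NullityCapEquiv}, $(s,p)$-nullity of $E_j$ is equivalent to $\uCap_{-s,p'}(E_j)=0$, and $(s,p)$-nullity of $E$ is equivalent to $\uCap_{-s,p'}(E)=0$. Since each $E_j$ is Borel, and $E$ is Borel, I expect that for Borel sets the inner capacity $\uCap$ coincides with the (outer) capacity $\cCap$ — this is a standard capacitability statement from the potential theory collected in \S\ref{sec:Capacity}, valid for the Bessel capacities $\cCap_{\alpha,q}$ with $\alpha=-s\geq0$ and $q=p'$, which are the classical nonlinear potentials of Adams--Hedberg. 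Then countable subadditivity of the outer Bessel capacity $\cCap_{\alpha,q}$ (again a standard fact from \cite{AdHe}, to be recalled in \S\ref{sec:Capacity}) gives $\cCap_{-s,p'}(E)\leq\sum_j\cCap_{-s,p'}(E_j)=0$, whence $\uCap_{-s,p'}(E)\leq\cCap_{-s,p'}(E)=0$ and $E$ is $(s,p)$-null by Theorem \ref{thm:NullityCapEquiv} again. The restriction $s\leq0$ is exactly what guarantees $-s\geq0$, so that $\cCap_{-s,p'}$ is one of the genuine Bessel capacities for which subadditivity and capacitability of Borel sets hold.

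The main obstacle, as I see it, is entirely in part \rf{mm}: one must be careful that the countable union result genuinely requires passing through the \emph{outer} capacity and then back to the inner capacity, and this round trip only works because Borel sets are capacitable for $\cCap_{\alpha,q}$ with $\alpha\geq0$. (Inner capacity is not countably subadditive in general, and the class of $(s,p)$-null sets need not be closed under arbitrary countable unions without the Borel hypothesis — indeed the Borel assumption is what makes the statement true and is why it is imposed.) In part \rf{ll} the only subtlety is verifying $\phi u\neq0$, which is immediate from the support-shrinking property of multiplication together with the fact that $\phi$ is chosen not to vanish identically on $\supp u$; everything else is bookkeeping with supports. I would therefore defer the detailed proof of \rf{mm} to \S\ref{sec:Capacity} (as the excerpt already announces) once the subadditivity and capacitability properties of the Bessel capacities have been stated, and give \rf{ll} in full immediately.
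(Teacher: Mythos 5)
Your part \rf{ll} is essentially the paper's own proof: the same contrapositive, the same choice of a point $\bx\in\supp u\cap(E\setminus F)$ at positive distance from $F$, the same cutoff $\phi$ with $\phi(\bx)\neq 0$, and the same conclusion $\supp(\phi u)\subset E$. The only thing you leave unproved is that $\phi u\neq 0$; the paper spells this out (if $\phi$ is nonvanishing on $B_\eps(\bx)$ and $u(\psi)\neq 0$ for some $\psi\in\scrD(B_\eps(\bx))$, then $(\phi u)(\psi/\phi)=u(\psi)\neq 0$), and your appeal to boundedness of multiplication is not the right justification for it, but the claim is standard and your identification of it as the one point needing care is correct.

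Part \rf{mm} follows the paper's strategy (nullity $\Leftrightarrow$ vanishing inner capacity, pass to outer capacity via Borel capacitability, use countable subadditivity, return), but as written it contains a genuine gap: you attribute both Borel capacitability and countable subadditivity to the capacity $\cCap_{-s,p'}$, i.e.\ the one defined via test functions with $u=1$ near $K$, which is the capacity appearing in Theorem \ref{thm:NullityCapEquiv}. Neither property is available for $\cCap_{\alpha,q}$: the paper explicitly records that $\cCap_{\alpha,q}$ is probably \emph{not} a Choquet capacity (so Borel capacitability is unknown) and that no countable subadditivity result is known for it. Both properties are known only for the ``classical'' capacity $\ccap_{\alpha,q}$ defined via $u\geq 1$ (Propositions \ref{thm:BorelCap} and \ref{thm:CountableSubadd}). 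The missing ingredient is the nontrivial equivalence $\ccap_{\alpha,q}\simeq\cCap_{\alpha,q}$ for $\alpha\geq 0$ (Theorem \ref{thm:CapEquiv}, together with Proposition \ref{prop:CapEquiv}), which transfers the vanishing of $\uCap_{-s,p'}$ to that of $\ucap_{-s,p'}$ and back; only then can one run your round trip $\ucap\to\ocap\to$ subadditivity $\to\ucap$. This is exactly the chain the paper assembles in \S\ref{sec:Capacity} (packaged as Theorem \ref{thm:NullitycapEquiv}), so your argument is repairable, but the step ``for Borel sets $\uCap=\cCap$'' cannot be justified as stated and must be rerouted through the lower-case capacity.
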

\begin{proof}
\rf{ll} We argue by contrapositive. 
Suppose that $E\cup F$ is not $(s,p)$-null, i.e.\ there exists a non-zero $u\in H^{s,p}$ with $\supp{u}\subset E\cup F$. 
Then if $\supp{u}\subset F$, $F$ is not $(s,p)$-null and we are done. If not, there exists $\bx\in\supp{u}\cap(E\setminus F)$, and, 
since $F$ has no limit points in $E\setminus F$, $\eps:=\dist(\bx,F)>0$. 
Let $\phi\in\scrD(B_\eps(\bx))$ with $\phi(\bx)\neq 0$.
Then $0\neq\phi u\in H^{s,p}$ with $\supp{\phi u}\subset E$, so $E$ is not $(s,p)$-null.

That $\phi u \neq 0$ follows from the fact that if $u\in \scrD^*$ and $\phi\in \scrD$,
and if there exists $\bx\in\supp{u}$ such that $\phi(\bx)\neq 0$, then $\phi u\neq 0$ as a distribution on $\R^n$.
To see this, 
let $\eps>0$ be such that $\phi$ is non-zero in $B_\eps(\bx)$.
Then, since $\bx\in\supp{u}$, $u|_{B_\eps(\bx)}\neq 0$ and so $u(\psi)\neq0$ for some $\psi\in \scrD(B_\eps(\bx))$.
But then, defining $\varphi\in \scrD$ by $\varphi(\bx):=\psi/\phi$, for $\bx\in B_\eps(\bx)$, and $\varphi(\bx):=0$ otherwise, we have $(\phi u)(\varphi) = u(\psi)\neq 0$, so $\phi u$ is non-zero as claimed.
\end{proof}

\begin{rem}\label{rem:ExampleRQ}
Regarding part \rf{ll} of Proposition \ref{thm:NullityOfUnions}, it is natural to ask to what extent the assumption on $F$ can be weakened.
Certainly the result does not extend to general Borel $F$ when $s>n/p$. For a simple counterexample, let $E_1$ denote the elements of the open unit ball $B=B_1(\mathbf{0})$ which have at least one rational coordinate, and let $E_2=B\setminus E_1$. Then for $s>n/p$ both $E_1$ and $E_2$ are $(s,p)$-null, since they both have empty interior. But $E_1\cup E_2=B$, which is not $(s,p)$-null for any $s\in\R$, since it has non-empty interior.

This example also shows that part \rf{mm} of Proposition \ref{thm:NullityOfUnions} does not hold for all $s\in\R$.
Determining the maximal $s\in[0,n/p]$ such that Proposition \ref{thm:NullityOfUnions}\rf{mm} holds appears to be an \textbf{open problem}.
\end{rem}

The following proposition gives bounds on the nullity threshold of Cartesian products, derived from Propositions~\ref{prop:TensorPositive} (the lower bound) and \ref{prop:Trace} (the upper bound) in the Appendix. More general results can be derived for Cartesian products of more than two sets, but we do not present them here. 
The assumption that $E_1,E_2$ are Borel is needed only for the upper bound in the case $m(E_1\times E_2)=0$. 
\begin{prop}\label{cor:TensorProduct}
Let $n_1,n_2\in\N$, $1<p<\infty$, and let $E_1\subset \R^{n_1}$ and $E_2\subset \R^{n_2}$ be Borel.
Then the nullity threshold of the Cartesian product $E_1\times E_2\subset\Rnn$ satisfies: 
\begin{align}
\label{eq:ProductBounds}
s_-(p) \le s_{E_1\times E_2}(p)\le s_+(p),
\end{align}
where
\begin{align*}
\label{}
s_-(p)&:=\min\big\{s_{E_1}(p),\;s_{E_2}(p),\;s_{E_1}(p)+s_{E_2}(p)\big\},\\
s_+(p)&:=\begin{cases}
\min\big\{s_{E_1}(p),\;s_{E_2}(p)\big\}
& \text{ if }  m(E_1\times E_2)=0, 
\\
\min\{s_{E_1}(p)+\frac{n_2}p,\;s_{E_2}(p)+\frac{n_1}p\big\}
& \text{ if } m(E_1\times E_2)>0.
\end{cases}
\end{align*}
Moreover, if either $p=2$ or $s_1,s_2\in\N_0$, and if $E_j$ are not $(s_{E_j}(p),p)$-null, $j=1,2$, then $E_1\times E_2$ is not $(s_-,p)$-null.
If $p\le2$, $m(E_1\times E_2)>0$, and $E_j$ are $(s_{E_j}(p),p)$-null, $j=1,2$, then $E_1\times E_2$ is $(s_+,p)$-null.
\end{prop}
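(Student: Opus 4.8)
The statement splits into a lower bound for $s_{E_1\times E_2}(p)$, an upper bound, and two sharpness assertions. I would treat these in turn, but the key observation is that everything here is really a repackaging of the four Appendix results the paper has already cited — Propositions~\ref{prop:TensorPositive}, \ref{prop:Trace}, and the interpolation/embedding machinery of \S\ref{subsec:nullity} — so the proof is mostly bookkeeping about how nullity thresholds of product sets are controlled by those of the factors, together with a careful reading of what "threshold null" means. The plan is to phrase each inequality as: "if $s$ is strictly above such-and-such, then $E_1\times E_2$ is $(s,p)$-null" and "if $s$ is strictly below, it is not", and then read off the threshold.

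\textbf{Lower bound.} First I would show $E_1\times E_2$ is not $(s,p)$-null whenever $s<s_-(p)$. If $s<s_{E_j}(p)$ for some $j$, then $E_j$ carries a nonzero $u\in H^{s,p}(\R^{n_j})$; tensoring with a fixed $0\neq\phi\in\scrD(\R^{n_{3-j}})$ supported near a point of $E_{3-j}$ (which is nonempty once $E_j$ is not $(s,p)$-null, hence has nonempty... no — rather, one simply needs $E_{3-j}\neq\emptyset$, and if it were empty the product would be empty and there is nothing to prove) produces a nonzero element of $H^{s,p}(\R^{n_1+n_2})$ supported in $E_1\times E_2$; the needed mapping property $H^{s,p}(\R^{n_j})\otimes\scrD(\R^{n_{3-j}})\hookrightarrow H^{s,p}(\R^{n_1+n_2})$ is exactly the content of (the easy direction of) Proposition~\ref{prop:TensorPositive}. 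For the genuinely two-dimensional term $s<s_{E_1}(p)+s_{E_2}(p)$: take $0\neq u_j\in H^{s_j,p}(\R^{n_j})$ supported in a closed subset of $E_j$ for $s_j$ slightly below $s_{E_j}(p)$; then Proposition~\ref{prop:TensorPositive} gives $u_1\otimes u_2\in H^{\sigma,p}$ with $\sigma$ something like $s_1+s_2$ (up to the usual loss that forces the strict inequalities and the "$p=2$ or $s_j\in\N_0$" hypothesis for the sharp version), supported in $E_1\times E_2$, and $u_1\otimes u_2\neq 0$ because the tensor product of nonzero distributions is nonzero. Taking $s_1,s_2\uparrow$ gives non-nullity for all $s<s_{E_1}(p)+s_{E_2}(p)$. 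Combining the three cases yields the lower bound, and running the same argument with $u_j$ the actual threshold-attaining distributions (under the stated hypotheses on $p$ or $s_j$) gives the final sharpness claim that $E_1\times E_2$ is not $(s_-,p)$-null.

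\textbf{Upper bound.} Here I would use Proposition~\ref{prop:Trace}, the trace/restriction result, in contrapositive form: if $u\in H^{s,p}(\R^{n_1+n_2})$ is supported in $E_1\times E_2$ and $s$ is large enough, restricting (tracing) onto a generic slice $\R^{n_1}\times\{\by\}$ produces elements of $H^{s-n_2/p,p}(\R^{n_1})$ supported in $E_1$, which must vanish once $s-n_2/p>s_{E_1}(p)$; a Fubini-type argument across slices then forces $u=0$. Symmetrically with the roles reversed, giving $s_{E_1\times E_2}(p)\le\min\{s_{E_1}(p)+n_2/p,\ s_{E_2}(p)+n_1/p\}$. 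When $m(E_1\times E_2)=0$ one can do better: then (by Lemma~\ref{lem:nullity1}\rf{qq} applied to the factors, or directly) at least one factor has measure zero, so its threshold is at most... one improves the slice argument by noting $E_j$ of measure zero is $(0,p)$-null and using Theorem~\ref{thm:NullityCapEquiv} with the subadditivity of capacity — this is where "$E_j$ Borel" enters — to get the sharper $\min\{s_{E_1}(p),s_{E_2}(p)\}$; I'd follow whatever route the Appendix Proposition~\ref{prop:Trace} actually provides rather than reinvent it. The last sharpness claim ($p\le 2$, positive measure, both factors threshold null $\Rightarrow$ product threshold null) I would get from the "threshold null" hypotheses feeding into the measure-theoretic slicing: threshold-nullity of $E_j$ means no nonzero $H^{s_{E_j}(p),p}$ distribution on the slice, and the restriction $p\le 2$ is what makes the trace theorem for $H^{s,p}=F^s_{p2}$ land in the right space on slices (for $p>2$ the trace space would be a Besov space $B^{\cdot}_{pp}$, strictly larger than $H^{\cdot,p}$, breaking the argument).

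\textbf{Main obstacle.} The conceptual content is light; the delicate point is matching regularity indices exactly at the threshold. The strict inequalities in \eqref{eq:ProductBounds} are free, but the two sharpness statements require the tensor and trace theorems to hold \emph{at} the endpoint exponent, and this is precisely why the hypotheses "$p=2$ or $s_j\in\N_0$" (for the lower/tensor side) and "$p\le 2$, $m>0$" (for the upper/trace side) appear — these are the regimes in which $H^{s,p}$ behaves like a genuine $L^2$-type or favorably-interpolating space under tensoring and tracing. So the real work is to state Propositions~\ref{prop:TensorPositive} and~\ref{prop:Trace} in enough generality that these endpoint cases are covered, and to check that the "nonzero tensor product is nonzero" and "nonzero trace on a positive-measure set of slices" steps survive at the endpoint. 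I expect no surprises beyond careful index arithmetic, and the proof can be kept to a few lines per case by citing the Appendix.
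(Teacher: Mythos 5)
Your overall route is the paper's route: the paper gives no separate proof of this proposition but states explicitly that the lower bound comes from Proposition~\ref{prop:TensorPositive} (tensor products) and the upper bound from Proposition~\ref{prop:Trace} (traces onto slices), with the two endpoint assertions coming from the endpoint cases of those same results ($p=2$ or integer smoothness for the tensor side; the ``$t\ge s+n_2/p$ when $1<p\le 2$'' clause for the trace side). Your reading of where the hypotheses $p=2$, $s_j\in\N_0$ and $p\le 2$ enter is correct.

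There is, however, one step in your lower-bound argument that would fail as written. For $s<s_{E_j}(p)$ you propose to tensor a nonzero $u\in H^{s,p}(\R^{n_j})$ supported in $E_j$ with a fixed $0\neq\phi\in\scrD(\R^{n_{3-j}})$ ``supported near a point of $E_{3-j}$''. The support of $u\otimes\phi$ is $\supp u\times\supp\phi$, so to conclude that $u\otimes\phi$ is supported in $E_1\times E_2$ you need $\supp\phi\subset E_{3-j}$; if $E_{3-j}$ has empty interior (the case of interest throughout the paper) no such nonzero $\phi$ exists, and ``near a point of $E_{3-j}$'' does not suffice. Fortunately this case split is unnecessary: the general construction you give for the $s_{E_1}(p)+s_{E_2}(p)$ term --- take $0\neq u_j\in H^{s_j,p}(\R^{n_j})$ supported in $E_j$ with $s_j<s_{E_j}(p)$ and apply Proposition~\ref{prop:TensorPositive}, whose exponent $s_*=\min\{s_1,s_2,s_1+s_2\}$ tends to $s_-(p)$ as $s_j\uparrow s_{E_j}(p)$ --- already yields the full lower bound in all cases (when a threshold is positive one simply chooses the corresponding $s_j$ positive), so you should drop the test-function variant entirely. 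One further small point worth making explicit in the upper bound: when $m(E_1\times E_2)=0$ the improvement to $\min\{s_{E_1}(p),s_{E_2}(p)\}$ works because that minimum is necessarily $\le 0$ (at least one factor has measure zero), so it is the $s\le 0$ clauses of Proposition~\ref{prop:Trace} --- which carry no $n_j/p$ loss and are where the Borel hypothesis is used --- that apply, rather than any capacity subadditivity argument.
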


We also mention Proposition \ref{prop:TensorNegative}, which states that tensor-product distributions cannot have higher Sobolev regularity than their factors.
In particular, we cannot directly use tensor-product distributions to prove that the Cartesian product $E_1\times E_2\subset\R^{n_1+n_2}$ is not $(s,p)$-null for any $s>\min\{n_1/p,n_2/p\}$.
From the results in \cite{HansenPhD} (discussed briefly in Appendix~\ref{s:TensorProduct}) we might conjecture that the upper bound in \eqref{eq:ProductBounds} in the case $m(E_1\times E_2)>0$ can be improved to $s_{E_1\times E_2}(p)\le  s_{E_1}(p) + s_{E_2}(p)$.

\begin{rem}
The bounds in \eqref{eq:ProductBounds} do not in general allow $s_{E_1\times E_2}(p)$ to be computed from $s_{E_1}(p)$ and $s_{E_2}(p)$ (unless $s_{E_1}(p)\cdot s_{E_2}(p)=0$ and $m(E_1\times E_2)=0$). 
That $s_{E_1}$ and $s_{E_2}$ do not in general determine $s_{E_1\times E_2}$ is shown by the following examples. 
If $E_1=E_2=\{0\}\subset\R$, then $s_{E_1}(p)=s_{E_2}(p)=-1/p'$ and $s_{E_1\times E_2}(p)=-2/p'=s_{E_1}(p)+s_{E_2}(p)=s_-(p)$, so the lower bound in  \eqref{eq:ProductBounds} is achieved. 
If $E_1,E_2\subset\R$ are Borel sets 
with Hausdorff dimension zero, for which $E_1\times E_2$ has Hausdorff dimension one (cf.\ e.g.\ Example~7.8 of \cite{Fal}), 
then by Theorem~\ref{thm:NullityHausdorff} below,
$s_{E_1\times E_2}(p)=-1/p'=s_{E_1}(p)=s_{E_2}(p)=s_+(p)$, so the upper bound in  \eqref{eq:ProductBounds} is achieved.
\end{rem}

We end this section with a simple application of $(s,p)$-nullity to function spaces on subsets of~$\R^n$.
\begin{prop}
\label{thm:Hs_equality_closed}
Let $1<p<\infty$, $s\in\R$, and let $F_1,F_2$ be closed subsets of $\R^n$. Then the following statements are equivalent:
\begin{enumerate}[(i)]
\item \label{a} The symmetric difference $F_1\ominus F_2$ is $(s,p)$-null.
\item \label{b} $F_1\setminus F_2$ and $ F_2\setminus F_1$ are both $(s,p)$-null.
\item \label{c}$H^{s,p}_{F_1}=H^{s,p}_{F_2}$.
\end{enumerate}
\end{prop}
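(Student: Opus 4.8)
The plan is to prove the chain of implications \rf{b}$\Rightarrow$\rf{a}, \rf{a}$\Rightarrow$\rf{b}, and then \rf{b}$\Leftrightarrow$\rf{c}, using only the definition of $(s,p)$-nullity, Lemma \ref{lem:nullity1}\rf{aa}, and Proposition \ref{thm:NullityOfUnions}\rf{ll}. The equivalence of \rf{a} and \rf{b} is almost immediate: by definition $F_1\ominus F_2 = (F_1\setminus F_2)\cup(F_2\setminus F_1)$. If \rf{a} holds then \rf{b} follows from Lemma \ref{lem:nullity1}\rf{aa} since both pieces are subsets of $F_1\ominus F_2$. Conversely, if \rf{b} holds, I would like to invoke Proposition \ref{thm:NullityOfUnions}\rf{ll} to conclude that the union is $(s,p)$-null; the hypothesis there requires one of the two sets to have no limit points in the other set minus itself, and here one checks that every limit point of $F_1\setminus F_2$ lying in $(F_2\setminus F_1)$ would have to lie in $\overline{F_1}=F_1$ (since $F_1$ is closed) as well as in $F_2$, hence in $F_1\cap F_2$, which is disjoint from $F_2\setminus F_1$; so in fact $F_1\setminus F_2$ has no limit points in $(F_2\setminus F_1)\setminus(F_1\setminus F_2)=F_2\setminus F_1$, and the proposition applies.

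Next I would prove \rf{b}$\Rightarrow$\rf{c}. Assume $F_1\setminus F_2$ and $F_2\setminus F_1$ are $(s,p)$-null, and take $u\in H^{s,p}_{F_1}$; I must show $\supp u\subset F_2$, which gives $H^{s,p}_{F_1}\subset H^{s,p}_{F_2}$, and the reverse inclusion follows by symmetry. Consider the restriction of $u$ to the open set $(F_2)^c$. I claim this restriction vanishes: it is supported in $F_1\cap (F_2)^c = F_1\setminus F_2$. More carefully, one can argue directly with the distribution $u$: suppose for contradiction that $\supp u\not\subset F_2$, so there is a point $\bx\in\supp u$ with $\bx\notin F_2$; since $F_2$ is closed, $\eps:=\dist(\bx,F_2)>0$, and choosing $\phi\in\scrD(B_\eps(\bx))$ with $\phi(\bx)\neq 0$ gives (by the argument in the proof of Proposition \ref{thm:NullityOfUnions}\rf{ll}) a non-zero $\phi u\in H^{s,p}$ with $\supp(\phi u)\subset \supp u\cap B_\eps(\bx)\subset F_1\setminus F_2$, contradicting the $(s,p)$-nullity of $F_1\setminus F_2$. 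Hence $\supp u\subset F_2$ as required.

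Finally, for \rf{c}$\Rightarrow$\rf{b} I argue by contrapositive: suppose, say, $F_1\setminus F_2$ is not $(s,p)$-null, so there is a closed set $K\subset F_1\setminus F_2$ with a non-zero $u\in H^{s,p}_K$. Then $\supp u\subset K\subset F_1$, so $u\in H^{s,p}_{F_1}$, but $\supp u\subset K\subset (F_2)^c$ forces $u\notin H^{s,p}_{F_2}$ (a non-zero distribution cannot have support contained in the complement of its support), so $H^{s,p}_{F_1}\neq H^{s,p}_{F_2}$; the case where $F_2\setminus F_1$ fails to be null is symmetric. I expect no serious obstacle here: the only point needing a little care is the limit-point bookkeeping in \rf{b}$\Rightarrow$\rf{a}, i.e. verifying the hypothesis of Proposition \ref{thm:NullityOfUnions}\rf{ll}, and making sure throughout that ``$(s,p)$-null'' is used in the sense of Definition \ref{def:nullity} (quantifying over \emph{all} closed subsets), which is exactly what makes the contrapositive argument for \rf{c}$\Rightarrow$\rf{b} go through cleanly.
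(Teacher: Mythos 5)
Your proposal is correct and follows essentially the same route as the paper: (a)$\Leftrightarrow$(b) via Lemma \ref{lem:nullity1}\rf{aa} and Proposition \ref{thm:NullityOfUnions}\rf{ll} (your explicit verification that $F_1\setminus F_2$ has no limit points in $F_2\setminus F_1$, using $\overline{F_1\setminus F_2}\subset F_1$, is exactly the intended justification), and (b)$\Leftrightarrow$(c) via the same bump-function localisation argument the paper uses, merely phrased as a direct proof by contradiction rather than a contrapositive.
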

\begin{proof}
That \rf{a} $\Leftrightarrow$ \rf{b} follows from Lemma \ref{lem:nullity1}\rf{aa} and Proposition \ref{thm:NullityOfUnions}\rf{ll}. 
To show that \rf{c} $\Rightarrow$ \rf{b} we argue by contrapositive. 
Suppose without loss of generality that $F_1\setminus F_2$ is not $(s,p)$-null. Then there exists $0\neq u\in H^{s,p}$ such that $\supp u \subset F_1\setminus F_2$, so that $u\in H^{s,p}_{F_1}$ but $u\not\in H^{s,p}_{F_2}$.
To show that \rf{b} $\Rightarrow$ \rf{c} we also argue by contrapositive. 
Suppose that $H^{s,p}_{F_1}\neq H^{s,p}_{F_2}$. Without loss of generality, we assume that there exists $u\in H^{s,p}_{F_1}\setminus H^{s,p}_{F_2}$. Let $\bx\in\supp{u}\cap(F_1\setminus F_2)$, and (by the closedness of $F_2$), let $\eps>0$ be such that $B_\eps(\bx)\cap F_2$ is empty. 
Then, for any $\phi\in \scrD(B_\eps(\bx))$ such that $\phi(\bx)\neq 0$, it holds 
(cf.\ the proof of Proposition~\ref{thm:NullityOfUnions}\rf{ll}) 
that $0\neq \phi u \in H^{s,p}$ with $\supp(\phi u)\subset F_1\setminus F_2$, which implies that $F_1\setminus F_2$ is not $(s,p)$-null.
\end{proof}

We now present our main theoretical results concerning $(s,p)$-nullity. 
Since Lemma \ref{lem:nullity1}\rf{qq} tells us that a set is $(0,p)$-null if and only if its inner Lebesgue measure is zero (independently of $p$), 
it makes sense to consider the cases $s<0$ and $s>0$ separately.

\subsection{The case \texorpdfstring{$s<0$}{s negative} (sets with zero measure)}
\label{subsec:ResultsSL0}

The following theorem provides a partial characterisation of $(s,p)$-nullity for $-n/p'\leq s<0$ in terms of Hausdorff dimension ${{\rm dim_H}}$ (defined, e.g.,\ in \cite[\S3]{Fal} or \cite[\S 5.1]{AdHe}).  
It will be proved at the end of \S\ref{sec:Capacity} using standard results from \cite[\S5]{AdHe} connecting Hausdorff dimension and capacity\footnote{We remark that the results in \cite[Chapter~5]{AdHe} actually allow a slightly more precise characterisation of $(s,p)$-nullity in terms of Hausdorff \emph{measure}. But we shall not pursue such characterisations here, since doing so would add considerable complexity with little gain in insight (indeed, \cite[\S5.6.4]{AdHe} implies that even Hausdorff measure is not sufficient to provide a \emph{complete} characterisation of $(s,p)$-nullity). In any case the results in Theorem \ref{thm:NullityHausdorff} seem sufficient for the applications of scattering by fractal screens \cite{CoercScreen,Ch:13,CoercScreen2,ScreenPaper} that motivate the current study.
}.
We note that Theorem \ref{thm:NullityHausdorff} applies as a special case to regular submanifolds of $\R^n$, and also to the ``multi-screens'', relevant for acoustic and electromagnetic scattering, considered e.g.\ in \cite{ClHi:13}. 
\begin{thm}
\label{thm:NullityHausdorff}
Let $1<p<\infty$ and $E\subset\R^n$.
\begin{enumerate}[(i)]
\item \label{hh} For $-n/p'<s\leq 0$, if $\dimH{E}< n+p's$, then $E$ is $(s,p)$-null.
\item \label{gg} For $-n/p'\leq s<0$, if $E$ is Borel and $(s,p)$-null, then $\dimH{E}\leq n+p's$.
\end{enumerate}
In particular, if $E$ is Borel and $m(E)=0$, then 
\begin{align}\label{eq:DimHCharac}
s_E(p) =\frac{\dim_H{E}-n}{p'}\qquad \text{and}\qquad
\dimH{E} = \inf\big\{d: E \mbox{ is } \big((d-n)/p',p\big)\mbox{-null}\big\}.
\end{align}
\end{thm}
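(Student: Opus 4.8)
The plan is to reduce everything to Theorem~\ref{thm:NullityCapEquiv}, which identifies $(s,p)$-nullity of $E$ with the vanishing of the inner Bessel capacity $\underline{\rm Cap}_{-s,p'}(E)$, and then to feed in the standard comparisons between Bessel capacities and Hausdorff dimension that will be collected in \S\ref{sec:Capacity} (drawn from \cite[\S5]{AdHe}). Concretely, I will use three facts: (F1) for $0<\alpha q<n$, if $\dimH K<n-\alpha q$ then ${\rm Cap}_{\alpha,q}(K)=0$; (F2) for $0<\alpha q\le n$, if ${\rm Cap}_{\alpha,q}(E)=0$ then $\dimH E\le n-\alpha q$; and (F3), Choquet capacitability, which for Borel (indeed analytic) $E$ gives $\underline{\rm Cap}_{\alpha,q}(E)={\rm Cap}_{\alpha,q}(E)$. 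Throughout, for $s<0$ I set $\alpha=-s$, $q=p'$, so that $\alpha q=-sp'\in(0,n]$ and $n-\alpha q=n+p's$.

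For part~\rf{hh}: when $s=0$, the hypothesis $\dimH E<n$ forces $E$ to have outer Lebesgue measure zero, hence $\underline{m}(E)=0$, so $E$ is $(0,p)$-null by Lemma~\ref{lem:nullity1}\rf{qq}; when $-n/p'<s<0$ we have $0<\alpha q<n$ and every compact $K\subset E$ satisfies $\dimH K\le\dimH E<n-\alpha q$, so ${\rm Cap}_{\alpha,q}(K)=0$ by (F1), whence $\underline{\rm Cap}_{-s,p'}(E)=\sup\{{\rm Cap}_{\alpha,q}(K):K\subset E\text{ compact}\}=0$ and Theorem~\ref{thm:NullityCapEquiv} gives the claim (no measurability on $E$ is needed here, since the inner capacity is accessed through compact subsets). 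For part~\rf{gg}: if $E$ is Borel and $(s,p)$-null with $-n/p'\le s<0$, then Theorem~\ref{thm:NullityCapEquiv} gives $\underline{\rm Cap}_{-s,p'}(E)=0$, which by (F3) equals ${\rm Cap}_{-s,p'}(E)$; applying (F2) with $\alpha q=-sp'\in(0,n]$ yields $\dimH E\le n+p's$, the endpoint $s=-n/p'$ being exactly the critical case $\alpha q=n$ of (F2), which gives $\dimH E\le0$.

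For the ``in particular'' clause, assume $E$ is Borel with $m(E)=0$ (and, to avoid a trivial degeneracy, $E\neq\emptyset$) and write $d_0:=\dimH E\in[0,n]$. By Proposition~\ref{prop:NullityThreshold} it suffices to show that $E$ is $(s,p)$-null for every $s>(d_0-n)/p'$ and not $(s,p)$-null for every $s<(d_0-n)/p'$, which then forces $s_E(p)=(d_0-n)/p'$. For the first: if $s>0$ then $\underline{m}(E)=0$ gives $(0,p)$-nullity by Lemma~\ref{lem:nullity1}\rf{qq} and hence $(s,p)$-nullity by Lemma~\ref{lem:nullity1}\rf{bb}; if $(d_0-n)/p'<s\le0$ then automatically $-n/p'<s\le0$ and $d_0<n+p's$, so \rf{hh} applies. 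For the second: any such $s$ satisfies $s<(d_0-n)/p'\le0$, and if moreover $s\ge-n/p'$, then $E$ being $(s,p)$-null would force, by \rf{gg}, $\dimH E\le n+p's<d_0$, a contradiction, while if $s<-n/p'$ then $E$ is not $(s,p)$-null by Lemma~\ref{lem:nullity1}\rf{jj}. The second identity in \eqref{eq:DimHCharac} then follows from the defining property of $s_E(p)$: $d>\dimH E$ gives $(d-n)/p'>s_E(p)$, so $E$ is $((d-n)/p',p)$-null, whereas $d<\dimH E$ gives $(d-n)/p'<s_E(p)$, so $E$ is not; hence the infimum of the admissible $d$ equals $\dimH E$.

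The only genuinely non-routine ingredients are the capacity/Hausdorff-dimension comparisons (F1)--(F2) at the correct exponents — in particular the critical instance $\alpha q=n$ of (F2) needed for the endpoint $s=-n/p'$ — together with the capacitability fact (F3) that licenses replacing the inner capacity by the outer capacity for Borel sets; these are precisely the results to be imported from \cite[\S5]{AdHe} and recorded in \S\ref{sec:Capacity}, and once they are in place the argument above is just bookkeeping. It is worth flagging that the theorem cannot be sharpened to an ``iff'' at the borderline $\dimH E=n+p's$: there a set may or may not be $(s,p)$-null, and even knowing its Hausdorff measure does not settle this (cf.\ \cite[\S5.6.4]{AdHe}), which is exactly why \eqref{eq:DimHCharac} is phrased via an infimum rather than a dichotomy at a single dimension.
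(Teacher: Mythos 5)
Your argument follows essentially the same route as the paper: Maz'ya's characterisation (Theorem~\ref{thm:NullityCapEquiv}) reduces $(s,p)$-nullity to the vanishing of an inner capacity, the capacity/Hausdorff-dimension comparisons of \cite[\S 5.1]{AdHe} (recorded as Theorem~\ref{thm:CapDimH}) supply both implications, and your bookkeeping for the ``in particular'' clause is correct and matches what the paper leaves implicit. There is, however, one imported fact that is not available as you state it. The paper works with \emph{two} capacities: $\ccap_{s,p}$, built from trial functions with $u\geq 1$ near $K$, and $\cCap_{s,p}$, built from trial functions with $u=1$ near $K$. Maz'ya's theorem is phrased in terms of the inner capacity $\uCap$ arising from the latter, whereas the dimension comparisons (your (F1)--(F2)) and, crucially, Choquet capacitability of Borel/Suslin sets (your (F3)) are established in \cite{AdHe} only for the former (Proposition~\ref{thm:BorelCap}); the paper explicitly records Polking's remark that Borel capacitability for $\cCap_{s,p}$ is probably \emph{false}. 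So (F3) in the form $\uCap_{\alpha,q}(E)=\oCap_{\alpha,q}(E)$ for Borel $E$ cannot be cited, and (F1)--(F2) applied directly to $\cCap$ also presuppose a transfer from $\ccap$. The repair is exactly the bridge the paper builds in \S\ref{sec:Capacity}: the two capacities are equivalent up to constants for non-negative order (Theorem~\ref{thm:CapEquiv}, a genuinely non-trivial fact), hence their inner and outer versions vanish simultaneously (Proposition~\ref{prop:CapEquiv}), and for Borel $E$ one gets the chain $\uCap_{-s,p'}(E)=0\Leftrightarrow\ucap_{-s,p'}(E)=0\Leftrightarrow\ocap_{-s,p'}(E)=0$, after which Theorem~\ref{thm:CapDimH} applies (this is the content of Theorem~\ref{thm:NullitycapEquiv}, which is what the paper's one-line proof actually invokes). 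With that substitution your proof is complete and coincides with the paper's.
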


\begin{rem}
\label{rem:TriebelHausdorff}
A link between $(s,p)$-nullity and fractal dimension was established previously in \cite{Li:67a}. Specifically, \cite[Theorem 4]{Li:67a} implies part \rf{hh} of Theorem \ref{thm:NullityHausdorff} for compact sets, with $\dimH$ replaced by $\underline{\dimB}$, the lower box (or Minkowski) dimension\footnote{The definition of lower box dimension in \cite{Li:67a} differs from the standard definition found e.g.\ in \cite[Equation (2.5)]{Fal}. The two definitions can be reconciled by noting that $\liminf_{r\to 0^+}-\frac{\log(N(r))}{\log (r)} = \inf\{ d\geq 0: \liminf_{r\to 0^+} N(r)r^d =0\}$ for any function $N:(0,\infty)\to [1,\infty)$.}. 
Since $\dimH(E)\leq \underline{\dimB}(E)$ for all bounded $E\subset \R^n$ 
\cite[Proposition 3.4]{Fal}, 
our result in part \rf{hh} is stronger than what is provided by \cite[Theorem~4]{Li:67a}.
Examples of sets for which $\dimH(E)< \underline{\dimB}(E)$ are easy to find: a particularly simple example is the set $E=\{0\}\cup\{1/n:n\in\N\}\subset\R$, for which $\dimH(E)=0$ but $\underline{\dimB}(E)=1/2$ (cf.\ \cite[Example 2.7]{Fal}).

Related results can also be found in \cite[Theorem 17.8]{Triebel97FracSpec}, where a formula similar to \eqref{eq:DimHCharac} is stated in the context of the spaces $B^s_{pq}$. 
However, Triebel's result concerns a different notion of nullity to ours---in place of the space $H^{s,p}_F$ in Definition \ref{def:nullity} he has the space
\begin{align*}
B^{s,F}_{pq}:=\{u\in B^s_{pq} : u(\psi) = 0 \mbox{ for all } \psi\in\scrS \mbox{ for which } \psi = 0 \mbox{ on }F\}.
\end{align*}
As Triebel points out in \cite[p.~126]{Triebel97FracSpec}, while $B^{s,F}_{pq}\subset \{u\in B^s_{pq}:\supp u\subset F\}$,  in general we do not have equality here. 
Since $B^s_{pq}\subset F^s_{p2}=H^{s,p}$ for $q\leq \min\{p,2\}$, Triebel's result implies part \rf{gg} of Theorem \ref{thm:NullityHausdorff}, but part \rf{hh} of Theorem \ref{thm:NullityHausdorff} is stronger than what is provided by Triebel's result.
\end{rem}

Theorem \ref{thm:NullityHausdorff} (specifically \rf{eq:DimHCharac}) provides a simple characterisation of the nullity threshold $s_E(p)$ for a Borel set $E$ with $\dimH{E}<n$. But it tells us nothing about ``threshold nullity'', i.e.\ whether or not $E$ is $(s_E(p),p)$-null. 
A general result concerning threshold nullity is given by the next proposition, which follows from Proposition~\ref{prop:AH551},  Theorem~\ref{thm:NullitycapEquiv}\rf{b2} and Remark \ref{rem:Compact}.
\begin{prop}\label{prop:ThresholdNullity}
Let $E\subset\R^n$ be Borel with $0\leq \dim_H E<n$, and let $1<q<p<\infty$ and $-\infty<s<t<0$ satisfy $tq'=sp'=\dim_H E-n$. If $E$ is $(s,p)$-null, then $E$ is $(t,q)$-null.
\end{prop}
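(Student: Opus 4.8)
The plan is to translate the nullity hypothesis into the vanishing of a Bessel capacity (Theorem~\ref{thm:NullityCapEquiv}, or in its compact-set form Theorem~\ref{thm:NullitycapEquiv}\rf{b2}) and then to invoke the classical comparison of two Bessel capacities that share a common ``dimension'' parameter but differ in their integrability. Set $\alpha:=n-\dimH E$. The identities $sp'=tq'=\dimH E-n$ say precisely that $-s=\alpha/p'$ and $-t=\alpha/q'$, so $(-s)p'=(-t)q'=\alpha$, while $q<p$ forces $p'<q'$ and hence $-t<-s$. First I would dispose of the degenerate case $\dimH E=0$: then $\alpha=n$ and $t-s=n/p'-n/q'=n(1/q-1/p)$, so that the conclusion follows immediately from Lemma~\ref{lem:nullity1}\rf{bb}, whose threshold is attained here with equality. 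So assume henceforth $0<\dimH E<n$, i.e.\ $0<\alpha<n$.

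By Theorem~\ref{thm:NullityCapEquiv} it suffices to show that $\uCap_{-s,p'}(E)=0$ implies $\uCap_{-t,q'}(E)=0$. By definition of inner capacity --- equivalently, by Remark~\ref{rem:Compact} together with Theorem~\ref{thm:NullitycapEquiv}\rf{b2} --- the vanishing of $\uCap_{-s,p'}(E)$ is equivalent to $\cCap_{-s,p'}(K)=0$ for every compact $K\subseteq E$, and similarly for $-t,q'$; so it is enough to prove, for each such $K$, that $\cCap_{-s,p'}(K)=0$ implies $\cCap_{-t,q'}(K)=0$. Since $(-s)p'=(-t)q'=\alpha\in(0,n)$ and $p'<q'$, this is exactly the comparison of Bessel capacities at the critical dimension recorded in Proposition~\ref{prop:AH551} (cf.\ also \cite[\S5]{AdHe}). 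Applying it yields $\cCap_{-t,q'}(K)=0$ for every compact $K\subseteq E$, hence $\uCap_{-t,q'}(E)=0$, hence $E$ is $(t,q)$-null.

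The main obstacle is concentrated entirely in the capacity comparison. The elementary alternative --- the Sobolev embedding underlying Lemma~\ref{lem:nullity1}\rf{bb} --- would require $t-s\ge n(1/q-1/p)$, i.e.\ $\alpha\ge n$, and so is vacuous as soon as $\dimH E>0$, even though the assertion remains true; what makes it work is that at the critical dimension the capacities $\cCap_{\alpha/p',p'}$ and $\cCap_{\alpha/q',q'}$ are ordered although neither of the underlying spaces $H^{s,p}$, $H^{t,q}$ embeds into the other. Granting Proposition~\ref{prop:AH551}, the only remaining task is to verify that our parameters meet its hypotheses: $0<\alpha<n$ (from $0<\dimH E<n$, the boundary case $\alpha=n$ having been dealt with separately) and the ordering $p'<q'$ coming from $q<p$.
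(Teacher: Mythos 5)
Your proof is correct and follows essentially the same route as the paper, which derives the proposition from exactly the ingredients you use: reduction to compact subsets via Remark~\ref{rem:Compact}, the capacity characterisation of Theorem~\ref{thm:NullitycapEquiv}\rf{b2}, and the capacity comparison of Proposition~\ref{prop:AH551} with $(-s)p'=(-t)q'=n-\dimH E$ and $p'<q'$. Your separate treatment of $\dimH E=0$ is harmless but unnecessary, since Proposition~\ref{prop:AH551} allows $sp\le n$ with equality, so the case $\alpha=n$ is already covered by the same argument.
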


The following corollary provides a full classification of all possible nullity and nullity threshold sets $\mathcal N_E$ and $\mathcal T_E$ (defined as in \rf{eqn:NullitySet}--\rf{eqn:ThresholdNullitySet}) that can arise when $m(E)=0$. It is a simple consequence of Proposition \ref{prop:ThresholdNullity}, Theorem~\ref{thm:NullityHausdorff} and Lemma~\ref{lem:nullity1}\rf{qq}. 
This result makes clear that the ``gap'' between parts \rf{hh} and \rf{gg} of Theorem \ref{thm:NullityHausdorff} cannot in general be bridged: 
no \emph{complete} characterisation of $(s,p)$-null sets for $-n/p'\leq s<0$ in terms of Hausdorff dimension is possible. 
The sharpness of our classification is demonstrated in Theorem \ref{thm:CantorZoo}, which provides the nullity and nullity threshold sets for a range of Cantor sets (defined in Definition \ref{def:Cantor}), for which the question of threshold nullity can be answered completely using Theorem \ref{thm:CantorCapacity}. 

\begin{cor}
\label{cor:NullitySets}
Let $E\subset \R^n$ be Borel with $m(E)=0$, and set $d=\dimH(E)\in[0,n]$. 
If $d=n$ then $\mathcal N_E=\big\{(s,p):s\ge0,\;1<p<\infty\big\}$ and hence $\mathcal T_E=(1,\infty)$. 
Otherwise, if $0\leq d<n$ then 
\begin{align}
&\big\{(s,p):s>(d-n)/p'\big\}\subset \mathcal N_E\subset \big\{(s,p):s\geq (d-n)/p'\big\},\notag\\ 
\textrm { and either } \quad &\mathcal T_E\in\Big\{\emptyset,\;(1,\infty)\Big\}, \;\textrm{ or } \;
\mathcal T_E \in\Big\{(1,\pp),\;(1,\pp]\Big\}, \textrm{ for some } 1<\pp<\infty.
\label{eq:PossibleNullitySets}
\end{align}
Moreover, Theorem~\ref{thm:CantorZoo} shows that this result cannot be improved: for every nullity set $\mathcal N\subset\R\times(1,\infty)$ allowed by \eqref{eq:PossibleNullitySets}, there exists a Cantor set $E^{(n)}\subset\R^n$ for which $\mathcal N_{E^{(n)}}=\mathcal N$.
\end{cor}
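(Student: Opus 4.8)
The plan is to split into the two cases $d=n$ and $0\le d<n$, using Theorem~\ref{thm:NullityHausdorff} to locate the nullity threshold, Lemma~\ref{lem:nullity1}\rf{qq} to settle the endpoint $s=0$, and Proposition~\ref{prop:ThresholdNullity} to constrain the shape of $\mathcal T_E$; the closing assertion on sharpness is deferred to Theorem~\ref{thm:CantorZoo}.

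First I would dispose of the case $d=n$. Since $E$ is Borel with $m(E)=0$, Lemma~\ref{lem:nullity1}\rf{qq} shows that $E$ is $(0,p)$-null for every $p$, and Lemma~\ref{lem:nullity1}\rf{bb} (with $q=p$) then gives $\{(s,p):s\ge0\}\subset\mathcal N_E$. For the reverse inclusion, if $E$ were $(s,p)$-null for some $-n/p'\le s<0$ then Theorem~\ref{thm:NullityHausdorff}\rf{gg} would force $\dimH E\le n+p's<n$, contradicting $d=n$; and for $s<-n/p'$ the set $E$ (which is non-empty here, since $\dimH\emptyset=0$) is not $(s,p)$-null by Lemma~\ref{lem:nullity1}\rf{jj}. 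Hence $\mathcal N_E=\{(s,p):s\ge0\}$, so $s_E(p)\equiv 0$, every $p$ lies in $\mathcal T_E$, and $\mathcal T_E=(1,\infty)$.

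For $0\le d<n$, formula \eqref{eq:DimHCharac} gives $s_E(p)=(d-n)/p'<0$, and then the two-sided inclusion for $\mathcal N_E$ in \eqref{eq:PossibleNullitySets} is just a restatement of Proposition~\ref{prop:NullityThreshold}. The substance is the description of $\mathcal T_E$, for which the key observation is that $\mathcal T_E$ is downward closed: if $p_0\in\mathcal T_E$ and $1<q<p_0$, set $s:=s_E(p_0)=(d-n)/p_0'$ and $t:=s_E(q)=(d-n)/q'$; since $q<p_0$ forces $q'>p_0'$ and $d-n<0$, we get $-\infty<s<t<0$, while $sp_0'=tq'=d-n=\dimH E-n$. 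Thus Proposition~\ref{prop:ThresholdNullity} (with $p_0$ in the role of its parameter $p$) applies and shows that $E$ is $(t,q)$-null, i.e.\ $q\in\mathcal T_E$. Consequently, letting $p_*:=\sup\mathcal T_E$ when $\mathcal T_E\neq\emptyset$ (so $p_*>1$, since then $\mathcal T_E$ contains an interval $(1,p_0]$), downward closedness forces $\mathcal T_E$ to be one of $\emptyset$, $(1,\infty)$ when $p_*=\infty$, or $(1,p_*)$ respectively $(1,p_*]$ when $p_*<\infty$, according to whether the supremum is attained. This is precisely the dichotomy stated in \eqref{eq:PossibleNullitySets}.

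Beyond this bookkeeping, the corollary rests on the single nontrivial ingredient Proposition~\ref{prop:ThresholdNullity}, whose proof in turn uses the capacity estimates of \S\ref{sec:Capacity}; granted that result, the argument above is routine, so I do not expect any real obstacle within the corollary itself. The genuine work lies in the final ``moreover'' clause---that \emph{every} nullity set permitted by \eqref{eq:PossibleNullitySets} is actually realised by some concrete Cantor set $E^{(n)}\subset\R^n$---which is the content of Theorem~\ref{thm:CantorZoo}, proved there using the explicit capacity computations of Theorem~\ref{thm:CantorCapacity}; here I would simply invoke it.
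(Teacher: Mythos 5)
Your argument is correct and uses exactly the ingredients the paper relies on (the paper states the corollary as ``a simple consequence of Proposition~\ref{prop:ThresholdNullity}, Theorem~\ref{thm:NullityHausdorff} and Lemma~\ref{lem:nullity1}\rf{qq}'' without writing out the details): Theorem~\ref{thm:NullityHausdorff} and Lemma~\ref{lem:nullity1}\rf{qq},\rf{bb},\rf{jj} pin down $\mathcal N_E$ in both cases, and the downward closedness of $\mathcal T_E$ via Proposition~\ref{prop:ThresholdNullity} yields the stated dichotomy for $\mathcal T_E$, with the sharpness clause correctly deferred to Theorem~\ref{thm:CantorZoo}. Your write-up is the natural expansion of the paper's one-line justification, and I see no gaps.
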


We now consider two other classes of sets for which it is possible to answer completely the question of threshold nullity. 

First, when $E$ consists of a single point, any distribution supported by $E$ is necessarily a linear combination of the delta function and its derivatives \cite[Theorem 3.9]{McLean}. In this case it follows from \eqref{eq:delta} that $s_E(p)=-n/p'$, and moreover that $E$ is $(-n/p',p)$-null. Proposition \ref{thm:NullityOfUnions} implies that the same holds for all countable sets. 
(We note however that countability is not a necessary condition for $(-n/p',p)$-nullity; a counterexample is provided by the Cantor set $F^{(n)}_{0,\infty}$ in Theorem~\ref{thm:CantorZoo}).
\begin{cor}
\label{cor:Countable}
A non-empty countable set is $(s,p)$-null if and only if $s\geq -n/p'$.
\end{cor}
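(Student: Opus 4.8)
The plan is to establish both directions of the biconditional, using the already-available machinery. For the ``if'' direction, suppose $s \geq -n/p'$ and let $E = \{x_k : k \in \N\}$ be a countable set. For each $k$, the singleton $\{x_k\}$ is closed, and by \eqref{eq:delta} (together with the fact from \cite[Theorem 3.9]{McLean} that any distribution supported at a point is a finite linear combination of $\delta_{x_k}$ and its derivatives, each of which lies in $H^{s,p}$ precisely when $s < -n/p'$) we have $H^{s,p}_{\{x_k\}} = \{0\}$, so each $\{x_k\}$ is $(s,p)$-null. Now I would invoke Proposition~\ref{thm:NullityOfUnions}\rf{ll}: a finite union of $(s,p)$-null closed sets is $(s,p)$-null, and to pass to the countable case one applies part \rf{ll} iteratively together with the observation that the relevant hypothesis — $F$ has no limit points in $E \setminus F$ — can be arranged, or more directly one uses the ``compact support'' reduction in Remark~\ref{rem:Compact}: any nonzero $u \in H^{s,p}$ supported in $E$ gives, after multiplication by a suitable $\phi \in \scrD$, a nonzero compactly supported element of $H^{s,p}$ supported in $E \cap K$ for compact $K$, but $E \cap K$ is then a finite set (since a countable set meeting a compact set in infinitely many points would have a limit point, and we may instead argue that $E$ has at most countably many points in $K$, reducing to a genuinely finite union by an exhaustion argument on the support). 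Cleanest is probably: if $s \geq -n/p' \geq 0$ fails to be $\geq 0$, note $-n/p' < 0$, and for $s \leq 0$ Proposition~\ref{thm:NullityOfUnions}\rf{mm} directly gives that a countable union of $(s,p)$-null Borel sets is $(s,p)$-null; since singletons are Borel and $(s,p)$-null for $s \geq -n/p'$, this finishes the ``if'' direction when $s \leq 0$, and for $s > 0$ (if $-n/p' > 0$ were possible, which it is not, but to be safe) Lemma~\ref{lem:nullity1}\rf{bb} propagates nullity upward from any $s_0 \leq 0$ with $s_0 \geq -n/p'$.

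For the ``only if'' direction, suppose $E$ is a non-empty countable set that is $(s,p)$-null. Pick any $x_0 \in E$; then $\{x_0\} \subset E$, so by Lemma~\ref{lem:nullity1}\rf{aa} the singleton $\{x_0\}$ is $(s,p)$-null, i.e.\ $H^{s,p}_{\{x_0\}} = \{0\}$. But if $s < -n/p'$ then by \eqref{eq:delta} we have $0 \neq \delta_{x_0} \in H^{s,p}$ with $\supp \delta_{x_0} = \{x_0\}$, contradicting $(s,p)$-nullity. Hence $s \geq -n/p'$.

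The only genuinely delicate point is the countable-union step in the ``if'' direction: Proposition~\ref{thm:NullityOfUnions}\rf{mm} is stated only for $s \leq 0$, which is exactly the relevant range here since $-n/p' < 0$, so in fact there is no obstacle — $s \geq -n/p'$ forces nothing above $0$ to be needed, and for $s \geq 0$ we either are in the trivial regime or invoke monotonicity in $s$ from Lemma~\ref{lem:nullity1}\rf{bb}. So the proof is essentially immediate from the cited results; the main thing to get right is simply citing \eqref{eq:delta} and \cite[Theorem 3.9]{McLean} correctly for the singleton case and then applying Proposition~\ref{thm:NullityOfUnions}\rf{mm} (for $s \le 0$) respectively Lemma~\ref{lem:nullity1}\rf{bb} (to extend to $s > 0$, noting that for such $s$ the set automatically satisfies the empty-interior hypothesis so nullity is inherited). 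I would write this up in four or five lines.
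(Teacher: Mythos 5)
Your final argument is correct and is essentially the paper's own proof: singletons are handled via \eqref{eq:delta} and \cite[Theorem 3.9]{McLean}, the countable union is handled by Proposition~\ref{thm:NullityOfUnions}\rf{mm} (valid here since $-n/p'\le s\le 0$ covers the threshold case, with Lemma~\ref{lem:nullity1}\rf{bb} extending to $s>0$), and the converse follows from $\delta_{x_0}\in H^{s,p}$ for $s<-n/p'$ (equivalently Lemma~\ref{lem:nullity1}\rf{jj}). One caution: the detour you rightly abandon is actually false --- a countable set can meet a compact set in infinitely many points (e.g.\ $\{0\}\cup\{1/k:k\in\N\}\subset[0,1]$), so the reduction to a finite union does not work and the Proposition~\ref{thm:NullityOfUnions}\rf{mm} route you settle on is the one to keep.
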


Second, recall (e.g.\ \cite[\S3]{Triebel97FracSpec}) that for $0\leq d\leq n$ a closed set $F\subset \R^n$ with $\dimH(F)=d$ is called a $d$-set if there exist constants $c_1,c_2>0$ such that 
\begin{align}
\label{eq:dSet}
0<c_1 r^d \leq \mathcal H^d(B_r(\bx)\cap F) \leq c_2 r^d<\infty, \qquad \textrm{for all } \bx\in F, \; 0<r<1,
\end{align}
where $\mathcal H^d$ is the $d$-dimensional Hausdorff measure on $\R^n$. (Note that this definition differs from that used in the fractal geometry literature, e.g.\ \cite[p.~48]{Fal}.)
Condition \eqref{eq:dSet} may be understood as saying that $d$-sets are everywhere locally $d$-dimensional.
Note that the definition of $d$-set includes as a special case all Lipschitz $d$-dimensional manifolds, $d\in\{0,1,\ldots, n\}$ (cf.\ also \ref{thm:Domains}\rf{kk} below). 
By combining Theorem \ref{thm:NullityDensityEquiv} with results due to Triebel on the density of test functions in function spaces \cite[Theorems 3 and 5]{Tri:08} one can prove the following result.
\begin{thm}
\label{thm:dSet}
Let $1<p<\infty$, and $0<d<n$. Let $F\subset \R^n$ be either a compact $d$-set, or a $d$-dimensional hyperplane (in which case $d$ is assumed to be an integer). Then $F$ is $((d-n)/p',p)$-null.
\end{thm}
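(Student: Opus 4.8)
The plan is to use the capacitary characterisation of $(s,p)$-nullity from Theorem~\ref{thm:NullityCapEquiv}, but combined with the density characterisation of Theorem~\ref{thm:NullityDensityEquiv}, since the latter connects directly to Triebel's density theorems for $d$-sets. Recall that $F$ is $((d-n)/p',p)$-null if and only if $\scrD(F^c)$ is dense in $H^{(n-d)/p',\,p'}$. So, writing $\sigma:=(n-d)/p'>0$ and $q:=p'$, the goal becomes: show that $\scrD(F^c)=\scrD(\R^n\setminus F)$ is dense in $H^{\sigma,q}$, where $\sigma q=n-d$, i.e.\ $\sigma = (n-d)/q$.

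First I would invoke the known density statement for test functions vanishing near a $d$-set. Triebel's results \cite[Theorems 3 and 5]{Tri:08} state (in the Besov/Triebel--Lizorkin scale, hence in particular for $H^{s,p}=F^s_{p2}$) that for a compact $d$-set $F\subset\R^n$ with $0<d<n$, the space $\scrD(\R^n\setminus F)$ is dense in $H^{s,p}$ precisely when $s\le (n-d)/p$ (together with the borderline refinement that needs handling carefully at $s=(n-d)/p$; this is exactly where the $d$-set regularity of $F$, rather than merely $\dim_H F=d$, is used). Applying this with $s=\sigma$ and $p=q$, the condition $\sigma q = n-d$ puts us exactly at the borderline case $\sigma=(n-d)/q$, which is included in Triebel's density range. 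This yields density of $\scrD(F^c)$ in $H^{\sigma,q}=H^{(n-d)/p',\,p'}$, and then Theorem~\ref{thm:NullityDensityEquiv} gives that $F$ is $((d-n)/p',p)$-null. For the hyperplane case, $F$ is a closed $d$-dimensional affine subspace with $d\in\{0,1,\dots,n-1\}$; here one can either appeal to the analogous (and classical, e.g.\ via \eqref{eq:Interpolation} and explicit trace/Fourier computations) density statement for $\scrD(\R^n\setminus F)$ in $H^{(n-d)/p',p'}$, or reduce to the $d$-set case by noting a hyperplane is locally a $d$-set and using a cutoff/partition-of-unity argument together with Lemma~\ref{lem:nullity1}\rf{aa} and Proposition~\ref{thm:NullityOfUnions}\rf{ll} to pass from compact pieces to the whole hyperplane.

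A few technical points need care. One is compactness: Theorems~3 and~5 of \cite{Tri:08} are stated for compact $d$-sets, which covers the first case directly, but the hyperplane is unbounded, so the reduction-to-compact-pieces step must be done explicitly — intersect with large balls, observe each piece $F\cap \overline{B_R(\mathbf 0)}$ is a compact $d$-set (the $d$-set constants being uniform), conclude each is $((d-n)/p',p)$-null, and then use Proposition~\ref{thm:NullityOfUnions}\rf{ll} applied to a countable nested exhaustion — but note \rf{ll} is stated for finite unions of closed sets, so for the countable case one instead argues directly: any $u\in H^{s,p}$ supported in $F$ has $\phi u$ supported in a compact piece for each $\phi\in\scrD$, forcing $\phi u = 0$ for all such $\phi$ and hence $u=0$. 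A second point is making sure the precise borderline exponent in Triebel's theorem is as claimed; one should double-check that $s=(n-d)/p$ is in the density range (it is — the failure of density occurs for $s>(n-d)/p$, with the threshold case requiring the uniform $d$-set lower bound in \eqref{eq:dSet}).

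The main obstacle is really the correct invocation and interpretation of Triebel's density theorems: translating his Besov/Triebel--Lizorkin formulation into a statement about $H^{s,p}=F^s_{p2}$, confirming that the relevant density range is $s\le(n-d)/p$ with the endpoint included for genuine $d$-sets, and verifying the exponent bookkeeping $\sigma=(n-d)/q$ with $\sigma=(n-d)/p'$, $q=p'$ lands exactly on that endpoint. Once that is pinned down, the remainder — applying Theorem~\ref{thm:NullityDensityEquiv} and, for hyperplanes, the elementary localisation argument — is routine.
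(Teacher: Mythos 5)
Your proposal is correct and follows essentially the same route as the paper, which proves Theorem~\ref{thm:dSet} precisely by combining the density characterisation of Theorem~\ref{thm:NullityDensityEquiv} with Triebel's density results \cite[Theorems 3 and 5]{Tri:08}; your exponent bookkeeping ($-s=(n-d)/p'$ landing on the borderline $\sigma q=n-d$ of Triebel's dichotomy) is exactly what is needed. The extra care you take over the unbounded hyperplane case (localisation via cutoffs rather than a countable union argument) is a sound way to fill in a detail the paper leaves implicit, noting that \cite[Theorem 3]{Tri:08} in fact treats hyperplanes directly.
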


Our final theorem in this section applies the results of Theorem \ref{thm:NullityHausdorff} to the special case where the set $E$ is the boundary of an open set $\Omega\subset\R^n$. Its proof makes use of Lemma \ref{lem:domains-dimH} in \S\ref{sec:Domains}, which collects a number of results concerning the relationship between the analytical regularity of the boundary of a set and its fractal dimension. The proof of part \rf{kk} of the theorem is postponed until~\S\ref{sec:Capacity}. 

Here and in what follows we shall say that a non-empty open set $\Omega$ is $C^0$ (respectively $C^{0,\alpha}$, $0<\alpha<1$, respectively Lipschitz) if its boundary $\partial\Omega$ can be locally represented as the graph (suitably rotated) of a $C^0$ (respectively $C^{0,\alpha}$, respectively Lipschitz) function from $\R^{n-1}$ to $\R$, with $\Omega$ lying only on one side of $\partial\Omega$. 
For a more precise definition see \cite[1.2.1.1]{Gri}.
We note that for $n=1$ there is no distinction between these definitions: we interpret them all to mean that $\Omega$ is a countable union of open intervals whose closures are disjoint. We also point out that in the literature several alternative definitions of Lipschitz open sets can be found (for a detailed discussion see e.g.\ \cite{Fr:79,Gri}); in particular, our definition includes Stein's ``minimally smooth domains'' \cite[{\S}VI.3.3]{Stein}.
\begin{thm}
\label{thm:Domains}
Let $1<p<\infty$ and let $\Omega\subset\R^n$ be non-empty and open.
\begin{enumerate}[(i)]
\item \label{kk1} 
If $\Omega^c$ has non-empty interior then $\deO$ is not $(s,p)$-null for $s<-1/p'$. (In particular this holds if $\Omega\neq \R^n$ is $C^0$.)
\item \label{kk0} 
If $\Omega$ is $C^0$ and $s\geq 0$, then $\partial\Omega$ is $(s,p)$-null. 
\item \label{kk2} 
If $\Omega$ is $C^{0,\alpha}$ for some $0<\alpha<1$ and $s> -\alpha/p'$, then $\partial\Omega$ is $(s,p)$-null. 
\item \label{kk} If $\Omega$ is Lipschitz then $\partial\Omega$ is $(s,p)$-null if and only if $s\geq -1/p'$.
\end{enumerate}
\end{thm}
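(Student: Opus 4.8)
The plan is to treat the four parts in turn. Parts~\rf{kk1}--\rf{kk2}, and the non-borderline part of~\rf{kk}, will follow by combining the Hausdorff-dimension characterisation of Theorem~\ref{thm:NullityHausdorff} and the measure criterion of Lemma~\ref{lem:nullity1}\rf{qq},\rf{bb} with a handful of elementary geometric facts about $\partial\Omega$ (of the kind collected in Lemma~\ref{lem:domains-dimH}); only the threshold case $s=-1/p'$ in~\rf{kk} requires the $d$-set machinery of Theorem~\ref{thm:dSet}.

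For~\rf{kk1} the key point is that $\mathcal H^{n-1}(\partial\Omega)>0$ whenever $\Omega$ is nonempty and open and $\Omega^c$ has nonempty interior. Pick $\mathbf a\in\Omega$, $\mathbf b\in\inter(\Omega^c)$ and $\delta>0$ with $B_\delta(\mathbf a)\subset\Omega$, $B_\delta(\mathbf b)\subset\inter(\Omega^c)$, set $\mathbf e:=(\mathbf b-\mathbf a)/|\mathbf b-\mathbf a|$, and let $H$ be the hyperplane through $\mathbf a$ orthogonal to $\mathbf e$. For $\mathbf x'\in H$ with $|\mathbf x'-\mathbf a|<\delta$ the segment $\{\mathbf x'+t\mathbf e:0\le t\le|\mathbf b-\mathbf a|\}$ joins a point of $\Omega$ to a point of $\inter(\Omega^c)$; since $\overline\Omega$ and $\overline{\Omega^c}$ are closed, cover $\R^n$, and each meet this connected segment, they share a point on it, which lies in $\overline\Omega\cap\overline{\Omega^c}=\partial\Omega$. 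Hence orthogonal projection onto $H$ maps $\partial\Omega$ onto a set containing an $(n-1)$-ball, and being $1$-Lipschitz it cannot increase $\mathcal H^{n-1}$, so $\mathcal H^{n-1}(\partial\Omega)>0$ and $\dimH(\partial\Omega)\ge n-1$. As $\partial\Omega$ is closed, hence Borel, and nonempty ($\emptyset\ne\Omega\ne\R^n$), Theorem~\ref{thm:NullityHausdorff}\rf{gg} rules out $(s,p)$-nullity for $-n/p'\le s<-1/p'$ (else $\dimH(\partial\Omega)\le n+p's<n-1$), and Lemma~\ref{lem:nullity1}\rf{jj} does so for $s<-n/p'$. (The parenthetical case is immediate, since a $C^0$ set $\Omega\ne\R^n$ locally lies strictly below a graph, so the region strictly above is nonempty and open.)

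Parts~\rf{kk0} and~\rf{kk2} are then short. If $\Omega$ is $C^0$, $\partial\Omega$ is locally the graph of a continuous function on $\R^{n-1}$, so $m(\partial\Omega)=0$ by Fubini (countably many charts suffice), whence $\partial\Omega$ is $(0,p)$-null by Lemma~\ref{lem:nullity1}\rf{qq} and $(s,p)$-null for every $s\ge0$ by Lemma~\ref{lem:nullity1}\rf{bb}. If $\Omega$ is $C^{0,\alpha}$, a dyadic covering count (over each of the $N^{n-1}$ side-$1/N$ subcubes of a chart domain the graph is covered by at most $CN^{1-\alpha}$ cubes of side $1/N$) gives $\dimH(\partial\Omega)\le n-\alpha$; hence for $-\alpha/p'<s\le0$ one has $\dimH(\partial\Omega)\le n-\alpha<n+p's$, so Theorem~\ref{thm:NullityHausdorff}\rf{hh} applies, while $s>0$ is covered by~\rf{kk0}.

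Finally, part~\rf{kk}. For $n=1$, $\partial\Omega$ is countable by the convention adopted for Lipschitz open sets, so the claim is Corollary~\ref{cor:Countable}; assume $n\ge2$. Non-nullity for $s<-1/p'$ is part~\rf{kk1}, since a Lipschitz $\Omega\ne\R^n$ has $\inter(\Omega^c)\ne\emptyset$. For $s>-1/p'$, $\partial\Omega$ is locally a Lipschitz graph, so $\dimH(\partial\Omega)\le n-1$ (Lipschitz maps do not raise dimension), and Theorem~\ref{thm:NullityHausdorff}\rf{hh} together with~\rf{kk0} gives $(s,p)$-nullity. It remains to show $\partial\Omega$ is $(-1/p',p)$-null: here I would use that a Lipschitz boundary is an $(n-1)$-set, i.e.\ satisfies~\eqref{eq:dSet} with $d=n-1$ — locally writing $\partial\Omega=\{(\mathbf y',\gamma(\mathbf y'))\}$ with $\gamma$ Lipschitz, the graph map is bi-Lipschitz, whence $\mathcal H^{n-1}(B_r(\mathbf x)\cap\partial\Omega)\asymp r^{n-1}$ for $\mathbf x\in\partial\Omega$ and small $r$ — and then apply Theorem~\ref{thm:dSet} with $d=n-1$ to get $\big((n-1-n)/p',p\big)=(-1/p',p)$-nullity. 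Since that theorem is stated for compact $d$-sets, for unbounded $\Omega$ I would invoke Remark~\ref{rem:Compact} to reduce to compact $K\subset\partial\Omega$, truncate and close off $\Omega$ to a bounded Lipschitz $\Omega'$ with $K\subset\partial\Omega'$, and conclude via Theorem~\ref{thm:dSet} applied to the compact $(n-1)$-set $\partial\Omega'$ and Lemma~\ref{lem:nullity1}\rf{aa}. (Equivalently one can argue with capacities: by bi-Lipschitz invariance of $\cCap_{1/p',p'}$ and the vanishing of that capacity on compact pieces of an $(n-1)$-plane at this critical exponent, $\uCap_{1/p',p'}(\partial\Omega)=0$, and Theorem~\ref{thm:NullityCapEquiv} finishes; this is presumably the route taken in~\S\ref{sec:Capacity}.) The main obstacle is exactly this borderline assertion: Hausdorff dimension cannot decide it (Corollary~\ref{cor:NullitySets}), so the $d$-set structure of $\partial\Omega$ is essential, and the passage from the clean compact case to a general, possibly unbounded, Lipschitz $\Omega$ by localisation and truncation, together with verifying the uniform bounds~\eqref{eq:dSet} under the paper's broad notion of Lipschitz open set, is the genuinely fiddly technical step.
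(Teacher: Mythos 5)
Your proofs of parts \rf{kk1}--\rf{kk2} are correct and essentially the paper's: the geometric facts you establish inline (the segment-and-projection argument giving $\dimH(\partial\Omega)\ge n-1$, the null Lebesgue measure of $C^0$ graphs, and the bound $\dimH(\partial\Omega)\le n-\alpha$ for H\"older boundaries, which the paper cites from Triebel rather than re-deriving by your box-counting argument) are exactly the content of Lemma \ref{lem:domains-dimH}, after which Theorem \ref{thm:NullityHausdorff} and Lemma \ref{lem:nullity1} are applied just as you apply them. The real difference is in part \rf{kk} at the threshold $s=-1/p'$. The paper does \emph{not} verify that $\partial\Omega$ is an $(n-1)$-set: after a smooth cutoff and rotation it reduces to a global Lipschitz hypograph $\Omega=\{x_n<\phi(x_1,\dots,x_{n-1})\}$, notes that the graph map from the hyperplane $\R^n_0$ onto $\partial\Omega$ is bi-Lipschitz, and transfers the vanishing of $\ocap_{1/p',p'}$ from the hyperplane (Theorem \ref{thm:dSet}) to $\partial\Omega$ via the Lipschitz invariance of capacity (Theorem \ref{thm:Lipschitz}) and Theorem \ref{thm:NullitycapEquiv}\rf{b2} --- that is, essentially your parenthetical alternative. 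Your primary route (show $\partial\Omega$ is locally an $(n-1)$-set, truncate to a bounded Lipschitz $\Omega'$ with $K\subset\partial\Omega'$, and apply the compact $d$-set case of Theorem \ref{thm:dSet}) is viable, but as you concede it leaves the genuinely fiddly step open: a naive truncation such as $\partial\Omega\cap\overline{B_R}$ can fail the lower bound in \eqref{eq:dSet} at points where $\partial\Omega$ meets $\partial B_R$ tangentially, so you would actually have to construct the closed-off Lipschitz domain and check the uniform constants in \eqref{eq:dSet} across finitely many charts. The flattening-plus-capacity-invariance argument buys you exactly the avoidance of that verification, at the modest price of invoking Theorem \ref{thm:Lipschitz}; I would promote your parenthetical remark to the main argument.
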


\begin{proof}
The case $n=1$ is covered by Corollary \ref{cor:Countable}, so assume $n\geq 2$. 
For \rf{kk1}, Lemma \ref{lem:domains-dimH}\rf{app.a} states that $\dimH{\partial\Omega}\geq n-1$ and then Theorem \ref{thm:NullityHausdorff}\rf{gg} implies that $\deO$ is not $(s,p)$-null for any $s<-1/p'$.
\rf{kk0} follows from Lemma \ref{lem:nullity1}\rf{qq}
 and Lemma \ref{lem:domains-dimH}\rf{app.aa}. 
\rf{kk2} follows from Theorem \ref{thm:NullityHausdorff}\rf{hh} and Lemma \ref{lem:domains-dimH}\rf{app.b}.
\rf{kk} is proved in \S\ref{sec:Capacity}. 
\end{proof}

In \S\ref{sec:Domains} we provide concrete examples to demonstrate the sharpness of these results. In particular, Lemma \ref{lem:domains-dimH} implies that for $n\geq2$ there exists a bounded $C^{0,\alpha}$ open set whose boundary is not $(s,p)$-null for any $s<-\alpha/p'$, and a bounded $C^0$ open set whose boundary is not $(s,p)$-null for any $s<0$.

\subsection{The case \texorpdfstring{$s>0$}{s positive} (sets with non-zero measure)}
\label{subsec:ResultsSG0}

As has been discussed above, questions of $(s,p)$-nullity for $s<0$ can often be answered by appealing to Theorem \ref{thm:NullityCapEquiv} and applying standard potential theoretic results on the capacity ${\rm Cap}_{t,p'}$ with $t=-s>0$. When it comes to investigating $(s,p)$-nullity for $s>0$, however, Theorem \ref{thm:NullityCapEquiv} appears to be of little use because the properties of ${\rm Cap}_{t,p'}$ for $t<0$ do not seem to have been widely documented. Certainly, if a set $E$ is to have nullity threshold in $(0,n/p]$ it must have non-zero measure and empty interior (cf.\ Lemma \ref{lem:nullity1}\rf{cc},\rf{qq}). But we are not aware of any general characterisations of $(s,p)$-nullity for $s\in(0,n/p]$ in terms of the geometrical properties of a set, analogous to that provided by Hausdorff dimension for nullity thresholds in $[-n/p',0]$. 

The following theorem provides an alternative analytic characterisation in terms of the capacity ${\rm cap}_{s,p}$ defined in \S\ref{sec:Capacity}.  
It is taken from \cite[Theorem 11.3.2]{AdHe}, where it is stated with part \rf{a3} replaced by an equivalent statement in terms of sets of uniqueness (cf.\ \S\ref{subsec:SOU}) and the assumption that $F$ be closed relaxed to $F$ being Borel. 
It generalises an earlier result presented in \cite[Theorem 2.6]{Po:72}.
\begin{thm}[{\cite[Theorem 11.3.2]{AdHe}}]
\label{thm:SOU}
Let $1<p<\infty$ and $0<s\leq n/p$. Let $F$ be closed with empty interior. Then the following are equivalent:
\begin{enumerate}[(i)]
\item \label{a3} $F$ is $(s,p)$-null;
\item \label{b3} $\ccap_{s,p}(\Omega\setminus F)=\ccap_{s,p}(\Omega)$ for all open $\Omega\subset \R^n$;
\item \label{c3} $\ccap_{s,p}(B_\delta(\bx)\setminus F)=\ccap_{s,p}(B_\delta(\bx))$ for all open balls $B_\delta(\bx)\subset \R^n$;
\item \label{d3} For almost all $\bx\in\R^n$ (with respect to Lebesgue measure)
\[ \limsup_{\delta\to 0} \frac{\ccap_{s,p}(B_\delta(\bx)\setminus F)}{\delta^n}>0.\]
\end{enumerate}
\end{thm}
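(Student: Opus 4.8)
Theorem~\ref{thm:SOU} coincides with \cite[Theorem 11.3.2]{AdHe} as soon as one knows that, for a closed set $F$ with empty interior, condition \rf{a3} (``$F$ is $(s,p)$-null'') is equivalent to $F$ being a \emph{set of uniqueness} for $\Hsp$ in the sense used there (the details of this identification are what \S\ref{subsec:SOU} is for). So the plan is: (a) establish this equivalence; (b) dispatch the elementary implications among \rf{a3}--\rf{d3}; and (c) recall why the remaining ones hold, quoting the potential theory of \cite{AdHe} for the hard part.

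For (a): since $F$ is closed with empty interior, $F^c$ is open and dense, and (since $s>0$) every $u\in\Hsp$ lies in $L^p=H^{0,p}$ by \rf{eq:Embedding}; hence $\supp u\subset F$ if and only if $u=0$ a.e.\ on $F^c$. Passing to the $\ccap_{s,p}$-quasicontinuous representative $\tilde u$ of $u$ and using two standard facts from \cite[Chapter 6]{AdHe} --- that a set of zero $\ccap_{s,p}$-capacity is Lebesgue-null (valid since $s\le n/p$), and that a function in $\Hsp$ vanishing a.e.\ on an open set has $\tilde u=0$ $\ccap_{s,p}$-quasi-everywhere on that set --- one concludes that ``$\supp u\subset F$'' is in turn equivalent to ``$\tilde u=0$ $\ccap_{s,p}$-q.e.\ on $F^c$'', the defining property of a set of uniqueness. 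Thus \rf{a3} may be replaced by ``$F$ is a set of uniqueness'', and the theorem reduces to \cite[Theorem 11.3.2]{AdHe}.

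For (b): the implication \rf{b3} $\Rightarrow$ \rf{c3} is immediate since open balls are open sets, and \rf{c3} $\Rightarrow$ \rf{d3} follows because, as recalled in \S\ref{sec:Capacity}, $\ccap_{s,p}(B_\delta(\bx))/\delta^n\to+\infty$ as $\delta\to0^+$ for $0<s\le n/p$; so under \rf{c3} the $\limsup$ in \rf{d3} equals $+\infty>0$ at every $\bx$, a fortiori at a.e.\ $\bx$.

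For (c): it remains to close the loop, say via \rf{d3} $\Rightarrow$ \rf{a3} $\Rightarrow$ \rf{b3}, and this is where the substance lies. For \rf{d3} $\Rightarrow$ \rf{a3} I would argue by contrapositive: given $0\neq u\in\Hsp$ with $\supp u\subset F$, so that $u=0$ a.e.\ on $F^c$, one passes to $\tilde u$ and applies the fine-continuity and $(s,p)$-thinness theory of Bessel-potential functions from \cite[Chapters 6 and 9--11]{AdHe}; at each point $\bx$ at which $F^c$ is ``thick enough'' --- which the density condition in \rf{d3} guarantees for a.e.\ $\bx$ --- the value $\tilde u(\bx)$ is forced to equal $0$ by the vanishing of $u$ on $F^c$, so $\tilde u=0$ a.e.\ and $u=0$, contradicting $u\neq0$. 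For \rf{a3} $\Rightarrow$ \rf{b3}, again by contrapositive: from $\ccap_{s,p}(\Omega\setminus F)<\ccap_{s,p}(\Omega)$ for some open $\Omega$, compare the capacitary extremals of $\Omega$ and of $\Omega\setminus F$ (unique because the functional minimised in the definition of $\ccap_{s,p}$ is strictly convex, as $1<p<\infty$) and use their structure to exhibit a non-zero element of $\Hsp$ whose quasicontinuous representative vanishes q.e.\ off $F$ --- equivalently, by (a), a non-zero element of $H^{s,p}_F$, so that $F$ is not $(s,p)$-null. The main obstacle is exactly this potential-theoretic machinery --- quasicontinuity, capacitary extremals, and the Wiener/Kellogg theory of $(s,p)$-thinness, all developed in \cite{AdHe} --- which I would import rather than reprove. (Routing the argument instead through Theorem~\ref{thm:NullityDensityEquiv}, via density of $\scrD(F^c)$ in the negative-order space $H^{-s,p'}$, does not help: the same estimates would be needed.)
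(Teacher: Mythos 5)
Your proposal matches the paper's treatment: Theorem~\ref{thm:SOU} is simply quoted from \cite[Theorem 11.3.2]{AdHe}, and the only content the paper supplies itself is the identification of $(s,p)$-nullity with the set-of-uniqueness property for closed sets (Proposition~\ref{prop:SOU} in \S\ref{subsec:SOU}), which is exactly your step (a); the implications among \rf{b3}--\rf{d3} and back to \rf{a3} are imported wholesale, as you propose. The one minor difference is that the paper's bridge avoids quasicontinuous representatives: it deduces from $\ocap_{s,p}(\supp u\cap F^c)=0$ that $m(\supp u\cap F^c)=0$ via Theorem~\ref{thm:CapDimH}\rf{a1}, hence that $\supp u\cap F^c$ is $(s,p)$-null, and concludes with the union result Proposition~\ref{thm:NullityOfUnions}\rf{ll} --- but both routes are valid.
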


As is pointed out in \cite[p.~314]{AdHe}, given $1<p<\infty$ and $s\in (0,n/p]$, this characterisation allows us to construct compact sets $K\subset\R^n$ with positive measure and empty interior which are not $(s,p)$-null, by engineering the failure of condition \rf{b3} above. The approach suggested in \cite[p.~314]{AdHe} (described in more detail in \S\ref{subsec:SwissCheese} below), is based on a standard ``Swiss cheese'' construction. 
One starts with a bounded open set $\Omega\subset\R$ and removes from $\overline{\Omega}$ a countable sequence of open balls of diminishing radius in such a way that the remaining compact set $K\subset\overline{\Omega}$ has empty interior and satisfies $\ccap_{s,p}(\Omega\setminus K)<\ccap_{s,p}(\Omega)$. As we will explain in \S\ref{subsec:Examplessgreaterthanzero} (see in particular Theorem \ref{thm:SwissCheeseNullity}), sufficient conditions to ensure the latter bound can be obtained using the countable sub-additivity of capacity (Proposition \ref{thm:CountableSubadd}) and standard estimates on the capacity of balls (Proposition \ref{prop:CapBall}).

In \S\ref{subsec:Examplessgreaterthanzero} we apply this methodology to derive sufficient conditions for the non-$(s,p)$-nullity of certain fat Cantor sets (defined in Examples \ref{ex:FatCantor} and \ref{ex:SuperFatCantor}). In \S\ref{subsec:Examplessgreaterthanzero} we also prove similar but complementary results using a completely different methodology not involving capacity, adapted from \cite{La:11}, which is based on direct estimates of the Sobolev norm of the characteristic function of the set for the case $p=2$, obtained via explicit bounds on its Fourier transform.

However, these two approaches (i.e.,\ proving upper bounds on capacities, or on Sobolev norms) do not in general allow us to calculate the nullity threshold $s_K(p)$ for the compact set $K$ under consideration; they only provide a lower bound on $s_K(p)$ (by proving the existence of some $\tilde s\in(0,n/p]$ for which $K$ is not $(\tilde s,p)$-null). Since $K$ is assumed to have empty interior, all we can deduce is that $s_K\in [\tilde s,n/p]$. 
Only in the extreme case $\tilde s=n/p$ does such a non-nullity result specify the nullity threshold exactly. 
The existence of a compact set $K_p$ with empty interior which is not $(n/p,p)$-null appears to have been proved first by Polking in \cite[Theorem 4]{Po:72a}. Polking's set $K_p$ is a ``Swiss cheese'' set of the kind described above, but Polking's analysis does not make use of the capacity-theoretic characterisation of Theorem \ref{thm:SOU}; instead Polking provides an explicit construction of a non-zero function $f_p\in H^{n/p,p}_{K_p}$, 
appealing to Leibniz-type formulae for fractional derivatives in order to prove that $\|f_p\|_{H^{n/p,p}}<\infty$. 
As Polking remarks in \cite{Po:72a}, this result ``\emph{illustrates in a rather striking manner that the Sobolev embedding theorem is sharp}''.

\begin{thm}[{\cite[Theorem 4]{Po:72a},\cite[p.~314]{AdHe}}]
\label{thm:PolkingCheese}
Let $1<p<\infty$. There exists a compact set $K_p\subset\R^n$ with empty interior which is not $(n/p,p)$-null. 
In particular, $s_{K_p}(p)=n/p$.
\end{thm}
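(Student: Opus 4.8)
The plan is to carry out the ``Swiss cheese'' construction sketched in \cite[p.~314]{AdHe} and to exploit the capacity-theoretic characterisation of $(n/p,p)$-nullity in Theorem~\ref{thm:SOU}. Fix $\Omega=B_1(\mathbf{0})$, enumerate a countable dense subset $\{\bx_j:j\in\N\}$ of $\overline\Omega$ (say $\Omega\cap\Q^n$), and for radii $r_j\in(0,1)$ still to be chosen set $U:=\bigcup_{j\in\N}B_{r_j}(\bx_j)$ and $K:=\overline\Omega\setminus U$. Then $K$ is closed and bounded, hence compact; since each $\bx_j$ lies in $U$ and the $\bx_j$ are dense in $\overline\Omega$, any ball contained in $K$ would contain some $\bx_j\in U$, so $K$ has empty interior; and $\meas(K)\ge\meas(B_1)\bigl(1-\sum_j r_j^n\bigr)>0$ as soon as $\sum_j r_j^n<1$. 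Thus, whatever the eventual choice of radii, $K_p:=K$ will be a compact set with empty interior and positive measure, as required.

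The substance of the proof is to choose the $r_j$ small enough that condition \rf{b3} of Theorem~\ref{thm:SOU} fails for this particular $\Omega$, which (given that $K$ is closed with empty interior and $s=n/p$ satisfies $0<s\le n/p$) forces $K$ to be \emph{not} $(n/p,p)$-null. Since $\Omega\setminus K\subset U$, monotonicity together with the countable subadditivity of capacity (Proposition~\ref{thm:CountableSubadd}) gives
\[
\ccap_{n/p,p}(\Omega\setminus K)\ \le\ \ccap_{n/p,p}(U)\ \le\ \sum_{j\in\N}\ccap_{n/p,p}\bigl(B_{r_j}(\bx_j)\bigr),
\]
and the key input is the standard estimate for the capacity of balls (Proposition~\ref{prop:CapBall}): in the borderline case $sp=n$ one has $\ccap_{n/p,p}(B_r)\to 0$ as $r\to 0^+$ (indeed $\ccap_{n/p,p}(B_r)\approx(\log(2/r))^{1-p}$ for $0<r<1$, the exponent $1-p$ being negative). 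Hence we may pick each $r_j\in(0,1)$ so small that simultaneously $r_j^n\le 2^{-j-1}$ and $\ccap_{n/p,p}(B_{r_j}(\bx_j))\le 2^{-j-1}\,\ccap_{n/p,p}(\Omega)$; summing the second family of inequalities yields $\ccap_{n/p,p}(\Omega\setminus K)\le\tfrac12\,\ccap_{n/p,p}(\Omega)<\ccap_{n/p,p}(\Omega)$, so \rf{b3} fails and Theorem~\ref{thm:SOU} tells us $K$ is not $(n/p,p)$-null.

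It remains to upgrade this to the statement about the nullity threshold. Because $K$ has empty interior, Lemma~\ref{lem:nullity1}\rf{dd} shows $K$ is $(s,p)$-null for every $s>n/p$; on the other hand, not being $(n/p,p)$-null together with the monotonicity in Lemma~\ref{lem:nullity1}\rf{bb} shows $K$ is not $(s,p)$-null for any $s\le n/p$. By Proposition~\ref{prop:NullityThreshold} this means $s_{K_p}(p)=n/p$, completing the proof.

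The only genuinely non-routine ingredient is the decay of the capacity of small balls at the critical exponent, $\ccap_{n/p,p}(B_r)\to 0$ as $r\to 0^+$: this is precisely what allows a \emph{dense} family of balls to be excised from $\overline\Omega$ while losing only a fraction of the total capacity, and it fails for $sp>n$ (where points are non-polar), in agreement with Lemma~\ref{lem:nullity1}\rf{dd}; it is this dichotomy at $s=n/p$ that makes the result striking. A minor technical wrinkle is that some balls $B_{r_j}(\bx_j)$ with $\bx_j$ near $\partial\Omega$ protrude beyond $\overline\Omega$; this is harmless, since we use only the one-sided bound coming from monotonicity and subadditivity. As an alternative one could follow Polking's original route \cite{Po:72a} and write down an explicit $0\ne f_p\in H^{n/p,p}_{K_p}$, estimating $\|f_p\|_{H^{n/p,p}}$ via Leibniz-type formulae for fractional derivatives; but given the capacity machinery assembled in \S\ref{sec:Capacity} the argument above is the shorter one.
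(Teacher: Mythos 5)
Your argument is correct and is essentially the route the paper itself takes: Theorem~\ref{thm:PolkingCheese} is stated by citation to \cite[Theorem 4]{Po:72a} and \cite[p.~314]{AdHe}, and the Swiss-cheese-plus-capacity argument you give (excise a dense family of balls whose capacities at the critical exponent $s=n/p$ sum to less than $\ccap_{n/p,p}(\Omega)$, then invoke the failure of condition \rf{b3} of Theorem~\ref{thm:SOU}) is precisely the \cite[p.~314]{AdHe} construction that the paper formalises as Theorem~\ref{thm:SwissCheeseNullity} and the discussion following it. All the steps check out, including the deduction $s_{K_p}(p)=n/p$ from Lemma~\ref{lem:nullity1}\rf{bb} and \rf{dd}.
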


The set $K_p$ whose existence is guaranteed by Theorem \ref{thm:PolkingCheese} is, at least \emph{a priori}, $p$-dependent. (Certainly the constructions in \cite[Theorem 4]{Po:72a} and \cite[p.~314]{AdHe} are intrinsically $p$-dependent.) By taking the closure of the union of a countable sequence of such sets $K_p$, suitably scaled and translated, one can construct a compact set with empty interior that is not $(n/p,p)$-null for any $1<p<\infty$.
\begin{cor}
\label{cor:Compact}
There exists a compact set $K\subset\R^n$ with empty interior which is not $(n/p,p)$-null for any $1<p<\infty$. In particular, $s_K(p)=n/p$ for every $1<p<\infty$.
\end{cor}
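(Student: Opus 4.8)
The plan is to build $K$ as the closure of a countable union of scaled, translated copies of the $p$-dependent sets $K_p$ supplied by Theorem~\ref{thm:PolkingCheese}, choosing the copies so that the union remains compact, still has empty interior, and simultaneously violates nullity at every exponent. Concretely, fix a sequence $p_j\in(1,\infty)$, $j\in\N$, that is dense in $(1,\infty)$ (for instance an enumeration of the rationals in $(1,\infty)$). For each $j$ let $K_{p_j}\subset\R^n$ be a compact set with empty interior which is not $(n/p_j,p_j)$-null, as given by Theorem~\ref{thm:PolkingCheese}. By translating and rescaling (operations under which both compactness, emptiness of interior, and non-$(s,p)$-nullity are preserved --- the last because if $0\neq u\in H^{n/p_j,p_j}_{K_{p_j}}$ then the corresponding affinely transformed distribution lies in $H^{n/p_j,p_j}$ and is supported in the transformed set) we may assume $K_{p_j}\subset B_{2^{-j}}(\bx_j)$ for a sequence of centres $\bx_j\to\mathbf 0$ with the balls $\overline{B_{2^{-j}}(\bx_j)}$ pairwise disjoint and accumulating only at $\mathbf 0$. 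Then set
\begin{align*}
K:=\overline{\bigcup_{j\in\N}K_{p_j}}=\{\mathbf 0\}\cup\bigcup_{j\in\N}K_{p_j}.
\end{align*}

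First I would check the three required properties of $K$. Compactness is clear since $K$ is closed and bounded (contained in a fixed ball around $\mathbf 0$). For empty interior: any open ball $B\subset\R^n$ that avoids $\mathbf 0$ meets only finitely many of the disjoint balls $B_{2^{-j}}(\bx_j)$, hence $B\cap K$ is contained in a finite union of the $K_{p_j}$, a finite union of closed sets with empty interior, which by the Baire category theorem (or directly, since a finite union of sets with empty interior and closed has empty interior) has empty interior, so $B\not\subset K$; and any ball containing $\mathbf 0$ contains points arbitrarily close to $\mathbf 0$ lying outside every $B_{2^{-j}}(\bx_j)$, hence outside $K$. Finally, for each $j$, since $K_{p_j}\subset K$ is closed and not $(n/p_j,p_j)$-null, Lemma~\ref{lem:nullity1}\rf{aa} gives that $K$ is not $(n/p_j,p_j)$-null; thus $s_K(p_j)=n/p_j$ by Proposition~\ref{prop:NullityThreshold}.

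It remains to upgrade this from the dense set $\{p_j\}$ to all $p\in(1,\infty)$, and here the key observation is monotonicity of the function $p\mapsto s_K(p)$ combined with the fact that $n/p$ is the \emph{a priori} upper bound. Fix any $p\in(1,\infty)$; I want $s_K(p)=n/p$. Pick a sequence $p_{j_k}\uparrow p$ of our dense exponents. For each $k$ and any $s<n/p$ we have, for $k$ large enough, $n/p_{j_k}>s$, so that $K$ is not $(s,p_{j_k})$-null (since $K$ is not $(n/p_{j_k},p_{j_k})$-null and $s<n/p_{j_k}$ means $s$ is below the threshold). Now I invoke Lemma~\ref{lem:nullity1}\rf{bb}: with $q=p_{j_k}<p$, a set that is not $(s,q)$-null is not $(t,p)$-null whenever $t\ge s+\max\{n(1/q-1/p),0\}=s+n(1/p_{j_k}-1/p)$; equivalently there exists a non-zero $H^{s,p_{j_k}}$ distribution supported in $K$, and by the embedding \rf{eq:EmbeddingCompact} (applied to a compactly supported such distribution, using Remark~\ref{rem:Compact}) it lies in $H^{s-n(1/p_{j_k}-1/p),p}$, hence in $H^{s',p}$ for $s':=s-n(1/p_{j_k}-1/p)$. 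As $k\to\infty$, $p_{j_k}\to p$ and so $s'\to s$; letting $s\uparrow n/p$ as well, we obtain non-zero elements of $H^{s',p}_K$ with $s'$ arbitrarily close to $n/p$ from below, i.e.\ $s_K(p)\ge n/p$. Combined with the universal bound $s_K(p)\le n/p$ from Lemma~\ref{lem:nullity1}\rf{cc},\rf{dd} (since $K$ has empty interior but positive measure, the latter because $m(K)\ge m(K_{p_1})>0$ by Lemma~\ref{lem:nullity1}\rf{qq} as $K_{p_1}$ is not $(0,p_1)$-null), this yields $s_K(p)=n/p$ for all $1<p<\infty$.

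The main obstacle is the passage from the dense set of exponents to all exponents: there is no monotonicity of $p\mapsto s_K(p)$ strong enough to conclude directly, so the argument must genuinely produce, for a given $p$, distributions in $H^{s',p}_K$ with $s'$ close to $n/p$, and this is exactly what the Sobolev-type embedding \rf{eq:EmbeddingCompact} for compactly supported distributions delivers when fed the non-nullity at a nearby smaller exponent $p_{j_k}$; the point is that the regularity loss $n(1/p_{j_k}-1/p)$ in that embedding tends to $0$ as $p_{j_k}\to p$, so no regularity is lost in the limit. A minor technical point to be careful about is that \rf{eq:EmbeddingCompact} is stated for a fixed compact set and for the subspaces $H^{\cdot,\cdot}_K$; since the non-zero distribution produced at exponent $p_{j_k}$ may a priori be supported on all of $K$ (which is compact) rather than on $K_{p_j}$, one should apply the embedding with the compact set taken to be $K$ itself, which is legitimate, and then the support containment $\supp u\subset K$ is automatically preserved.
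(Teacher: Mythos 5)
Your construction of $K$ is essentially the one in the paper (a countable union of scaled, translated Polking sets accumulating at a single point), and your verification of compactness, empty interior, and the preservation of non-nullity under affine maps are all fine. The gap is in the final step, where you pass from the exponents $p_{j_k}$ to an arbitrary $p$. You fix $s<n/p$, use non-$(s,p_{j_k})$-nullity for $s$ \emph{strictly below} the threshold $n/p_{j_k}$, push it through \rf{eq:EmbeddingCompact} to get non-$(s',p)$-nullity with $s'=s-n(1/p_{j_k}-1/p)<n/p$, and then let $s'\uparrow n/p$. This only proves that $K$ is not $(s',p)$-null for every $s'<n/p$, i.e.\ $s_K(p)\ge n/p$; it does \emph{not} prove the actual claim of the corollary, namely that $K$ is not $(n/p,p)$-null at the threshold itself. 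Non-nullity at all levels strictly below the threshold never implies non-nullity at the threshold --- that is exactly the ``threshold nullity'' phenomenon the paper classifies (cf.\ Corollary~\ref{cor:NullitySets} and Theorem~\ref{thm:CantorZoo}) --- so the limiting argument genuinely loses the endpoint.

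The fix is immediate, and shows that your concern about there being ``no monotonicity strong enough'' is unfounded: apply \rf{eq:EmbeddingCompact} at the critical smoothness. For $q<p$ one has $H^{n/q,q}_K\subset H^{n/p,p}_K$, because $t-s=n/q-n/p=\max\{n(1/q-1/p),0\}$ holds with equality (equivalently, use the contrapositive of Lemma~\ref{lem:nullity1}\rf{bb} with $t=n/q$ and $s=n/p$). Hence the non-zero element of $H^{n/p_{j},p_{j}}_K$ that you already have for each $j$ is itself a non-zero element of $H^{n/p,p}_K$ for every $p>p_{j}$, with no limiting procedure and no loss of the endpoint. This also shows that the dense sequence of exponents is unnecessary: any sequence $p_j\downarrow 1$ suffices (the paper takes $p_j=1+1/j$), since all that is needed is that every $p\in(1,\infty)$ has some $p_j$ below it.
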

\begin{proof}
For each $j\in \N$ let $p_j= 1+1/j$, $\bx_j=(2^{1-j},0,\ldots,0)$ and let $K_{p_j}\subset [0,1]^n$ be a compact set with empty interior which is not $(n/p_j,p_j)$-null. Then $K=\{\mathbf{0}\}\cup \bigcup_{j=1}^\infty (\bx_j+2^{-j}K_{p_j})$ is compact, has empty interior, and is not $(n/p,p)$-null for any $1<p<\infty$.
\end{proof}

The following related result is a by-product of the arguments leading to Theorem \ref{thm:PolkingCheese}, and gives the nullity threshold of certain non-compact sets, for example $\R^n\setminus\Q^n$.
\begin{thm}\label{thm:OpenMinusDense}
Let $A\subset\R^n$ have non-empty interior and $Q\subset A$ be countable and dense in the interior of $A$.
Then $E:=A\setminus Q$ is not $(n/p,p)$-null, and hence $s_E(p)=n/p$, for all $1<p<\infty$.
\end{thm}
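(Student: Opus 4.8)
The plan is to recycle the ``Swiss cheese'' construction behind Theorem~\ref{thm:PolkingCheese}, with one twist: the balls that are excised should be centred exactly at the points of $Q$, so that the resulting compact set is automatically contained in $E=A\setminus Q$. Write $s=n/p$. Since $\inter(A)\neq\emptyset$ and $Q$ is dense in it, $E$ has empty interior (any ball lying in $A$ lies in $\inter(A)$, hence meets $Q$), so $s_E(p)$ is defined and, by Proposition~\ref{prop:NullityThreshold}, belongs to $[-n/p',n/p]$. It therefore suffices to produce a single closed set $F\subset E$ that is \emph{not} $(n/p,p)$-null; then $E$ is not $(n/p,p)$-null and $s_E(p)=n/p$ follows at once.

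To build $F$ I would first fix an open ball $\Omega=B_r(\by)$ with $\overline\Omega\subset\inter(A)$; since only countably many of the concentric spheres $\partial B_\rho(\by)$ can meet the countable set $Q$, one may also arrange that $\partial\Omega\cap Q=\emptyset$. As $Q$ is dense in $\inter(A)\supset\Omega$, the set $Q\cap\Omega$ is a (necessarily infinite) dense subset of $\Omega$; enumerate it $\{q_j\}_{j\in\N}$. By the capacity estimates for balls (Proposition~\ref{prop:CapBall}) one has $\ccap_{n/p,p}(B_\rho(\bx))\to0$ as $\rho\to0^+$ uniformly in $\bx$, while $\ccap_{n/p,p}(\Omega)>0$; hence I can choose radii $\rho_j>0$ so small that $\sum_{j\in\N}\ccap_{n/p,p}\!\big(B_{\rho_j}(q_j)\big)<\ccap_{n/p,p}(\Omega)$. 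Set $G:=\bigcup_j B_{\rho_j}(q_j)$ and $F:=\overline\Omega\setminus G$.

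One then checks the four properties of $F$. It is compact and contained in $\overline\Omega\subset\inter(A)\subset A$; each $q\in Q$ either lies outside $\overline\Omega$ (when $q\notin\Omega$, using $Q\cap\partial\Omega=\emptyset$) or equals some $q_j$ and hence lies in $G$, so $F\cap Q=\emptyset$, i.e.\ $F\subset E$. Because $G$ contains the dense set $\{q_j\}$, $F$ has empty interior. Finally $\Omega\setminus F=\Omega\cap G$, so by monotonicity and countable subadditivity of capacity (Proposition~\ref{thm:CountableSubadd}), $\ccap_{n/p,p}(\Omega\setminus F)\le\sum_j\ccap_{n/p,p}(B_{\rho_j}(q_j))<\ccap_{n/p,p}(\Omega)$. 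The equivalence \ref{a3}$\Leftrightarrow$\ref{b3} in Theorem~\ref{thm:SOU} then shows $F$ is not $(n/p,p)$-null, which completes the argument as in the first paragraph.

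There is no serious obstacle here; the only genuine subtlety is to reconcile the four competing demands on $F$ — staying inside $A$, avoiding $Q$, having empty interior, and having strictly smaller relative capacity than $\Omega$ — and the point is that taking $Q\cap\Omega$ itself as the (automatically dense) family of ball-centres secures avoidance of $Q$ and empty interior simultaneously, leaving the capacity bookkeeping exactly as in the proof of Theorem~\ref{thm:PolkingCheese}. As an alternative route that uses Theorem~\ref{thm:PolkingCheese} purely as a black box, one can rescale Polking's set to a small closed, nowhere-dense copy $\eps K_p$ and apply the Baire category theorem to find a translate $\bz_0+\eps K_p$ lying in a prescribed ball of $\inter(A)$ and disjoint from the countable set $Q$, and then invoke the translation- and dilation-invariance of $(s,p)$-nullity used in the proof of Corollary~\ref{cor:Compact}.
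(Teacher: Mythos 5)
Your proof is correct, and the underlying idea---excise balls centred exactly at the points of $Q$ so that the resulting Swiss-cheese set automatically lands in $E=A\setminus Q$---is the same one the paper relies on. The difference is the engine used to certify non-nullity of the resulting compact set $F$. The paper's proof simply invokes Polking's explicit construction from \cite[Theorem 4]{Po:72a} (which produces a concrete non-zero function of $H^{n/p,p}_{K_p}$ via Leibniz formulae for fractional derivatives), applied to a bounded open subset of $A$ with the ball centres taken in $Q$; it leaves implicit the small checks you carry out explicitly, such as arranging $\partial\Omega\cap Q=\emptyset$ and keeping $\overline\Omega\subset\inter(A)$. You instead run the capacity route that the paper itself sets up in Theorem \ref{thm:SwissCheeseNullity}: monotonicity and countable subadditivity (Proposition \ref{thm:CountableSubadd}) plus the ball estimates of Proposition \ref{prop:CapBall} give $\ccap_{n/p,p}(\Omega\setminus F)<\ccap_{n/p,p}(\Omega)$, and Theorem \ref{thm:SOU} converts this into non-$(n/p,p)$-nullity. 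What your version buys is self-containedness within the paper's own machinery (no appeal to the internals of \cite{Po:72a}) at the cost of only proving existence of \emph{some} non-zero distribution rather than exhibiting one; what the paper's version buys is brevity and an explicit witness function. Your Baire-category alternative (finding a translate of a rescaled $K_p$ that misses the countable set $Q$, since $\bigcup_{q\in Q}(q-\eps K_p)$ is meagre) is a genuinely different and clean way to use Theorem \ref{thm:PolkingCheese} purely as a black box, and is not in the paper. Two cosmetic points: you should restrict to radii $\rho_j\le 1$ (and $r\le 1$) so that Proposition \ref{prop:CapBall} applies, and the positivity $\ccap_{n/p,p}(\Omega)>0$ deserves the one-line justification from the lower bound in that proposition.
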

\begin{proof}
Given $A$ and $Q$ as in the assertion and $1<p<\infty$, the Swiss-cheese construction of \cite[Theorem 4]{Po:72a}, applied to any non-empty bounded open subset of $A$, gives a set $K_p\subset A\setminus Q$ that is not $(n/p,p)$-null.
Then $A\setminus Q$ is not $(n/p,p)$-null for any $p$ and has empty interior, so the theorem follows.
\end{proof}

Proving the existence of sets whose nullity threshold lies in the open interval $(0,n/p)$ appears to be an \textbf{open problem}---certainly we are not aware of any literature on this matter. The difficulty here is that one would need to prove that a set with positive measure is $(s,p)$-null for some $s\in(0,n/p)$. The possibility of doing this for a closed set $F$ using the capacity-theoretic characterisation of Theorem \ref{thm:SOU} seems remote. Using the conditions \rf{b3} or \rf{c3} from that theorem would require us to prove \emph{equality} of two capacities, which is difficult because capacity can usually only be estimated rather than computed exactly. Condition \rf{d3} could in principle be verified by proving a sufficiently sharp lower bound on $\ccap_{s,p}(B_\delta(\bx)\setminus F)$, but even this seems difficult in general as lower bounds for $\ccap_{s,p}$ appear to be available only for balls (by contrast, quite general upper bounds can be obtained using countable sub-additivity, as we have 
mentioned above). 
We also remark that the Fourier approach adopted in Proposition~\ref{prop:FatCantor} is not promising in this regard: lower bounds for $|\hat u|$ might be used to show that a particular $u$ is not in a certain $\Hsp_F$, but this would not rule out the existence of other non-trivial functions in $\Hsp_F$.

\section{Capacity}
\label{sec:Capacity}
In this section we provide proofs of Proposition \ref{thm:NullityOfUnions}\rf{mm}, Theorem \ref{thm:NullityHausdorff}, and Theorem \ref{thm:Domains}\rf{kk}. Our arguments rely on Theorem \ref{thm:NullityCapEquiv}, which characterises $s$-null sets in terms of a set function called \emph{capacity}. The notion of capacity is central to potential theory, and we briefly review some of the basic ideas here. Our presentation is based broadly on \cite{AdHe,Maz'ya}, but other relevant references include \cite{Ca:67,Li:67b,Me:70,Po:72,AdPo:73,Ad:74,Ziemer,Ch:54}. 
We begin with a rather general definition of capacity before specialising to the particular capacities of relevance to the problem in hand. Since (i) the literature is in places highly technical; (ii) notational conventions are varied and sometimes conflicting (see the discussion 
in Remark~\ref{rem:SvsD} below); and (iii) the concept of capacity may not be familiar to some readers, we take care to clarify certain details that are not fully explained in \cite{AdHe,Maz'ya}. 
\begin{defn}
\label{def:cap}
Let $\Ccomp$ be a set function defined on compact subsets of $\R^n$, taking values in $[0,\infty]$, such that $\Ccomp(\emptyset)=0$ and $\Ccomp(K_1)\leq \Ccomp(K_2)$ for all compact $K_1\subset K_2 \subset \R^n$. 
From $\Ccomp$ we define \emph{inner} and \emph{outer capacities} on arbitrary subsets $E\subset\R^n$ by
\begin{align*}
\underline{C}(E):=\sup_{\substack{K\subset E\\ K \textrm{ compact}}} \Ccomp(K),
\qquad\qquad 
\overline{C}(E):=\inf_{\substack{\Omega\supset E\\ \Omega \textrm{ open}}}  \underline{C}(\Omega).
\end{align*}
Clearly $\underline{C}(E)\leq \overline{C}(E)$ for all $E\subset\R^n$. If $\underline{C}(E)=\overline{C}(E)$ then we say $E$ is \emph{capacitable} for $\Ccomp$ and define the \emph{capacity} of $E$ to be $C(E):=\underline{C}(E)=\overline{C}(E)$.
\end{defn}
It follows straight from the definitions that open sets are capacitable, and that $\underline{C}(K)= \Ccomp(K)$ for all compact $K\subset\R^n$. It is common practice \cite{AdHe,Maz'ya} to denote the original set function from which $\uC$ and $\oC(E)$ are defined simply by $C$, rather than $\Ccomp$. No ambiguity arises from this abuse of notation provided that compact sets are capacitable; that this is the case for the capacities of interest to us will be demonstrated in Proposition \ref{prop:CompactCapacitable} below. 

We now define two particular capacities of relevance for the study of $(s,p)$-nullity. 
For $1<p<\infty$, $s\in\R$ and $K\subset\R^n$ compact, we define
\begin{align}
\label{capdefn}
\capcomp_{s,p}(K)&:= \inf  \{\|u \|^p_{H^{s,p}}:u\in \scrD \textrm{ and } u\geq 1 \textrm{ in a neighbourhood of } K \},\\
\label{Capdefn}
\Capcomp_{s,p}(K)&:= \inf  \{\|u \|^p_{H^{s,p}}:u\in \scrD \textrm{ and } u=1 \textrm{ in a neighbourhood of } K \}.
\end{align}
Clearly $\capcomp_{s,p}(K)\leq \Capcomp_{s,p}(K)$ for all compact sets $K$. It is a much deeper fact that, at least for $s>0$, the reverse inequality also holds, up to a constant factor---see Theorem \ref{thm:CapEquiv} below.

\begin{rem}
\label{rem:SvsD}
The capacities $\ccap_{s,p}$ and $\cCap_{s,p}$ arising from \rf{capdefn} and \rf{Capdefn} are classical and appear throughout the potential theory literature.  
Our notation is adapted from \cite[\S10.4 and \S13.1]{Maz'ya}, where $\ccap_{s,p}(\cdot)$ and $\cCap_{s,p}(\cdot)$ are respectively denoted $\ccap(\cdot,H^{s}_p(\R^n))$ and $\cCap(\cdot,H^{s}_p(\R^n))$. But many other conflicting notational conventions exist: for instance, 
$\{\capcomp_{s,p},\Capcomp_{s,p}\}$ are respectively denoted 
$\{C_{s,p},N_{s,p}\}$ in \cite[\S2.2, \S2.7]{AdHe}, 
$\{N_{s,p},M_{s,p}\}$ in \cite{Li:67b},
and $\{B_{s,p},C_{s,p}\}$ in \cite{AdPo:73,Po:72}; $\capcomp_{s,p}$ is denoted $B_{s;p}$ in \cite{Me:70}, and $B^{(n)}_{s,p}$ in \cite{Ad:74}. 

Navigating the literature is also complicated by the fact that $\ccap_{s,p}$ and $\cCap_{s,p}$ can be defined in a number of equivalent ways. Firstly, Definitions \rf{capdefn}--\rf{Capdefn} are sometimes stated with the trial functions $u$ ranging over $\scrS$ (as in \cite[pp.~19--20]{AdHe}) rather than $\scrD$ (as in \cite[\S13.1]{Maz'ya}). That these two choices of trial space lead to the same set functions (and hence the same capacities) is straightforward to prove.
Let $\tcapcomp_{s,p}$ denote the set function defined by \rf{capdefn} using $\scrS$ instead of $\scrD$, and let $K\subset\R^n$ be compact.
Obviously $\tcapcomp_{s,p}(K)\leq \capcomp_{s,p}(K)$; 
for a bound in the opposite direction, consider any $u\in\scrS$ with $u\geq 1$ on an open neighbourhood $\Omega$ of $K$. 
Take $R>0$ such that $\overline\Omega\subset B_R(\mathbf{0})$, and take a cutoff $\chi\in\scrD$ such that $\chi=1$ in $B_R(\mathbf{0})$.
Then $w:=(1-\chi)u\in\scrS$, with support in the complement $O:=\R^n\setminus \overline{B_R(\mathbf{0})}$, so that given $\eps>0$ there exists $\psi_\eps\in\scrD(O)$ such that $\|w-\psi_\eps\|_{H^{s,p}}<\eps$.
This bound implies that $\eta_\eps:=\chi u+\psi_\eps\in \scrD$ satisfies
$\|u-\eta_\eps\|_{H^{s,p}}<\eps$, so that $\|\eta_\eps\|_{H^{s,p}}< \|u\|_{H^{s,p}}+\eps$; note also that $\eta_\eps=\chi u+0 \geq 1$ on $\Omega$. 
Since $u$ and $\eps>0$ were arbitrary we conclude that $\tcapcomp_{s,p}(K)\geq \capcomp_{s,p}(K)$, and hence that $\tcapcomp_{s,p}(K)= \capcomp_{s,p}(K)$, as claimed. 
The analogous result for $\cCap_{s,p}$ follows by a similar argument with ``$\geq 1$'' replaced by ``$=1$'' throughout.

Secondly, Definition \rf{capdefn} is sometimes stated (e.g.\ \cite[\S2.2]{AdHe} and \cite[\S13.1]{Maz'ya}) with ``on $K$'' instead of ``in a neighbourhood of $K$''. Again it is easy to verify that the two definitions are equivalent. Let $\hcapcomp_{s,p}$ denote the set function defined by \rf{capdefn} using ``on $K$'' instead of ``in a neighbourhood of $K$''. Then clearly $\hcapcomp_{s,p}(K)\leq \capcomp_{s,p}(K)$; for a bound in the opposite direction, note that, given $\alpha\in (0,1)$, if $u\geq 1$ on $K$ then there exists a neighbourhood of $K$ on which $u\geq \alpha$. 
Hence 
$\capcomp_{s,p}(K)\leq \alpha^{-p}\hcapcomp_{s,p}(K)$, 
and since this holds for $\alpha$ arbitrarily close to $1$, we conclude that $\hcapcomp_{s,p}(K)= \capcomp_{s,p}(K)$, as claimed.

Finally, for $s>0$ some authors use a definition of $\capcomp_{s,p}$ in which the right-hand-side of \rf{capdefn} is replaced by an infimum of $\|f\|_{L^p}^p$ over the non-negative $f\in L^p$  for which $\mathcal{J}_{-s}f \geq 1$ on $K$ (cf.\ e.g.\ \cite[Definition 2.1]{Po:72}). That this definition is equivalent to \rf{capdefn} is proved in \cite[Proposition 2.3.13]{AdHe}.
\end{rem}

\begin{rem}
\label{rem:CapMeasure}
For $s=0$ one can show using standard measure-theoretic techniques that the capacities $\ccap_{0,p}$ and $\cCap_{0,p}$ both coincide with the Lebesgue measure on $\R^n$. Specifically, for $E\subset\R^n$ let $\underline{m}(E)=\sup\{m(K):E\supset K,\,K \textrm{ compact}\}$ and $\overline{m}(E)=\inf\{m(\Omega):E\subset \Omega,\,\Omega \textrm{ open}\}$ be the usual inner and outer Lebesgue measures of $E$ \cite[Definitions~2.2--2.3]{BBT97}.
Then for every $1<p<\infty$ it holds that 
$\ucap_{0,p}(E) = \uCap_{0,p}(E) = \underline{m}(E)$ and $\ocap_{0,p}(E) = \oCap_{0,p}(E) = \overline{m}(E)$. 
Hence $E$ is capacitable for $\capcomp_{0,p}$ (equivalently for $\Capcomp_{0,p}$) if and only if $E$ is Lebesgue measurable, in which case $\ccap_{0,p}(E)=\cCap_{0,p}(E)=m(E)$, where $m$ is the Lebesgue measure.
\end{rem}

As promised, we now prove the capacitability of compact sets for $\capcomp_{s,p}$ and $\Capcomp_{s,p}$.
\begin{prop}
\label{prop:CompactCapacitable}
Compact sets are capacitable for both $\capcomp_{s,p}$ and $\Capcomp_{s,p}$, for all $1<p<\infty$ and $s\in\R$.
\end{prop}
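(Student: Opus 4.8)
The plan is to fix a compact set $K\subset\R^n$ and prove $\oC(K)=\uC(K)$ for each of $\Ccomp\in\{\capcomp_{s,p},\Capcomp_{s,p}\}$. Since $\uC(E)\le\oC(E)$ for every $E$, and since $\uC(K)=\Ccomp(K)$ as observed just after Definition~\ref{def:cap}, it suffices to establish the single inequality $\oC(K)\le\Ccomp(K)$. Unwinding the definition, $\oC(K)=\inf\{\uC(\Omega):\Omega\text{ open},\,\Omega\supset K\}$, so the task reduces to: given $\eps>0$, produce an open set $\Omega\supset K$ with $\uC(\Omega)\le\Ccomp(K)+\eps$.

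First I would choose, by the definition of $\Ccomp(K)$ in \eqref{capdefn}--\eqref{Capdefn}, a function $u\in\scrD$ admissible for $K$, i.e.\ with $u\ge1$ (resp.\ $u=1$) on some open neighbourhood $V$ of $K$, and with $\|u\|_{H^{s,p}}^p\le\Ccomp(K)+\eps$. The key point is then that this same $V$ is the set we want: for any compact $K'\subset V$, the function $u$ still satisfies $u\ge1$ (resp.\ $u=1$) on $V$, which is an open neighbourhood of $K'$, so $u$ is admissible for $\Ccomp(K')$ and hence $\Ccomp(K')\le\|u\|_{H^{s,p}}^p\le\Ccomp(K)+\eps$. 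Taking the supremum over all compact $K'\subset V$ gives $\uC(V)\le\Ccomp(K)+\eps$, and therefore $\oC(K)\le\uC(V)\le\Ccomp(K)+\eps$. Letting $\eps\to0$ yields $\oC(K)\le\Ccomp(K)$, completing the argument; the two cases $\capcomp_{s,p}$ and $\Capcomp_{s,p}$ are handled identically, the only change being ``$\ge1$'' versus ``$=1$''.

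I do not anticipate a genuine obstacle here: the proof is short, and the only thing that really needs care is recognising that it is precisely the \emph{neighbourhood} formulation of \eqref{capdefn}--\eqref{Capdefn} that makes a near-optimal competitor for $K$ simultaneously a competitor for \emph{every} compact subset of an open set containing $K$, which is exactly what is required to bound the inner capacity of that open set. (Had one worked instead with the ``on $K$'' formulation mentioned in Remark~\ref{rem:SvsD}, one would simply insert the scaling trick from that remark: if $u\ge1$ on $K$ then $u\ge\alpha$ on a neighbourhood of $K$ for any $\alpha\in(0,1)$, so $\alpha^{-1}u$ is admissible there, at the cost of a harmless factor $\alpha^{-p}\to1$.) It is also worth noting at the outset that $\Ccomp(K)<\infty$ for every compact $K$ --- any $\scrD$ cutoff equal to $1$ near $K$ is admissible --- so that the bound $\Ccomp(K)+\eps$ is finite and the reduction above is non-vacuous.
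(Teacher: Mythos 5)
Your proof is correct and is essentially the paper's own argument: the paper proves capacitability of compact sets for $\Capcomp_{s,p}$ in exactly this way, by taking a near-optimal $u\in\scrD$ with $u=1$ on a neighbourhood $\Omega$ of $K$ and observing that $u$ is then admissible for every compact $\tilde K\subset\Omega$, giving $\oCap_{s,p}(K)\le\uCap_{s,p}(\Omega)\le\Capcomp_{s,p}(K)+\eps$. The only difference is that for $\capcomp_{s,p}$ the paper simply cites \cite[Proposition~2.2.3]{AdHe} (noting the proof there extends to all $s\in\R$), whereas you run the same neighbourhood argument uniformly for both capacities, which works because \eqref{capdefn} is also stated in the ``in a neighbourhood of $K$'' form.
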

\begin{proof}
The result for $\capcomp_{s,p}$ is stated and proved in \cite[Proposition~2.2.3]{AdHe} for integer $s>0$, but the same proof is in fact valid for all $s\in\R$. 
To prove the result for $\Capcomp_{s,p}$, let $K\subset\R^n$ be compact and, given $\eps>0$, let $u\in\scrD$ satisfy $u=1$ in a neighbourhood $\Omega$ of $K$, with $\|u\|_{H^{s,p}}^p<\Capcomp_{s,p}(K)+\eps$. 
Hence $\Capcomp_{s,p}(\tilde{K})\leq \|u\|_{H^{s,p}}^p<\Capcomp_{s,p}(K)+\eps$ for all compact $\tilde{K}\subset \Omega$, which implies that 
$\cCap_{s,p}(\Omega)=\uCap_{s,p}(K)\leq \oCap_{s,p}(K)\leq\cCap_{s,p}(\Omega)\leq \Capcomp_{s,p}(K)+\eps$. Since $\eps$ was arbitrary, 
we conclude that $K$ is capacitable for $\Capcomp_{s,p}$.
\end{proof}

The link between capacity and nullity was stated in Theorem \ref{thm:NullityCapEquiv} above: to repeat, a set $E$ is $(s,p)$-null if and only if $\uCap_{-s,p'}(E)=0$. 
However, relatively little seems to be known about the capacity $\cCap_{s,p}$; the capacity $\ccap_{s,p}$ appears to be much better understood. 
In particular, there is a class of capacities, known as ``Choquet capacities'' (cf.\ \cite[Theorem 2.3.11]{AdHe} and Choquet's original work \cite{Ch:54}) for which all Suslin sets (defined in \cite[\S2.9, Notes to \S2.3]{AdHe}), in particular all Borel sets, are capacitable. The capacity $\ccap_{s,p}$ is well-known to be of this class \cite[\S2.3]{AdHe} (and see also \cite{Me:70}), at least for $s\geq 0$. 
But it has been suggested that the same is probably not true of $\cCap_{s,p}$ \cite[p.~1236]{Po:72} (although of course it is true for $\cCap_{0,p}$, cf.\ Remark \ref{rem:CapMeasure}).
\begin{prop}[{\cite[Propositions 2.3.12 and 2.3.13]{AdHe}}]
\label{thm:BorelCap}
Borel sets are capacitable for $\capcomp_{s,p}$ for $1<p<\infty$ and $s\geq 0$.
\end{prop}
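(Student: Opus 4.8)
The statement to prove is Proposition \ref{thm:BorelCap}: Borel sets are capacitable for $\capcomp_{s,p}$ for $1<p<\infty$ and $s\geq 0$.

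The plan is to reduce the claim to the general Choquet capacitability theorem. Recall that Choquet's theorem (cf.\ \cite[Theorem 2.3.11]{AdHe}, \cite{Ch:54}) asserts that if $\Ccomp$ is a \emph{Choquet capacity} relative to the compact sets---meaning it is monotone, and satisfies the two continuity properties
\begin{enumerate}[(C1)]
\item if $E_1\subset E_2\subset\cdots$ is an increasing sequence of arbitrary sets then $\oC\big(\bigcup_j E_j\big)=\lim_{j\to\infty}\oC(E_j)$;
\item if $K_1\supset K_2\supset\cdots$ is a decreasing sequence of compact sets then $\Ccomp\big(\bigcap_j K_j\big)=\lim_{j\to\infty}\Ccomp(K_j)$;
\end{enumerate}
then every Suslin set (in particular every Borel set) is capacitable. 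So the task is to verify that $E\mapsto \capcomp_{s,p}(E)$, together with its inner and outer extensions from Definition \ref{def:cap}, satisfies (C1) and (C2). Since the paper explicitly attributes the result to \cite[Propositions 2.3.12 and 2.3.13]{AdHe}, the proof here should cite that source and sketch the verification rather than reproduce all the technical estimates.

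First I would treat (C2), the continuity from above on compact sets, which is the easy direction: if $K=\bigcap_j K_j$ with $K_j$ decreasing compact, then monotonicity gives $\capcomp_{s,p}(K)\le \lim_j \capcomp_{s,p}(K_j)$, and for the reverse inequality one uses the definition \rf{capdefn}: given $\eps>0$ pick $u\in\scrD$ with $u\ge 1$ on a neighbourhood $\Omega$ of $K$ and $\|u\|_{H^{s,p}}^p<\capcomp_{s,p}(K)+\eps$; since $\{K_j\}$ is a decreasing family of compacts with intersection contained in the open set $\Omega$, we have $K_j\subset\Omega$ for all large $j$, whence $\capcomp_{s,p}(K_j)\le\|u\|_{H^{s,p}}^p<\capcomp_{s,p}(K)+\eps$ for all large $j$. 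This gives $\lim_j\capcomp_{s,p}(K_j)\le\capcomp_{s,p}(K)$, as required; note this argument works for every $s\in\R$.

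The substantive step is (C1), strong subadditivity / continuity from below, and this is where the restriction $s\ge 0$ enters. The proof in \cite{AdHe} proceeds by first establishing that $\capcomp_{s,p}$ is a strongly subadditive set function on open sets and that it is the \emph{capacity} of an equivalent variational problem over the nonnegative cone of $L^p$ (via the identification in \cite[Proposition 2.3.13]{AdHe}, already recalled in Remark \ref{rem:SvsD}): namely $\capcomp_{s,p}(K)=\inf\{\|f\|_{L^p}^p : f\in L^p,\ f\ge 0,\ \cJ_s f\ge 1 \text{ on } K\}$. Working with this reformulation, continuity from below for increasing sequences of arbitrary sets follows from a weak-compactness argument in $L^p$ (here $1<p<\infty$ is essential for reflexivity) combined with the fact that the Bessel kernel $\cJ_s$ has a nonnegative, locally integrable kernel for $s>0$, so that $\cJ_s$ is positivity-preserving and the constraint $\cJ_s f\ge 1$ behaves well under monotone limits; the case $s=0$ is the elementary one from Remark \ref{rem:CapMeasure}. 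The main obstacle is precisely this positivity structure: the equivalent formulation and the monotone-limit argument both rely on $\cJ_s$ having a nonnegative kernel, which holds only for $s\ge 0$, and this is exactly why the proposition is not claimed for negative $s$ (indeed $\cCap_{s,p}$, which lacks this structure, is not known to be a Choquet capacity, as noted before the statement). Once (C1) and (C2) are checked, Choquet's theorem applies verbatim and yields capacitability of all Borel (indeed Suslin) sets, completing the proof.
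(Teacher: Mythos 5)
Your proposal is correct and follows essentially the same route as the paper, which proves nothing itself but simply cites \cite[Propositions 2.3.12 and 2.3.13]{AdHe}: you correctly identify that Proposition 2.3.13 supplies the equivalence with the nonnegative-$L^p$-potential formulation (as already noted in Remark \ref{rem:SvsD}), that Proposition 2.3.12 verifies the Choquet capacity axioms for that formulation (with reflexivity of $L^p$ and positivity of the Bessel kernel entering exactly where you say, which is why $s\geq 0$ is needed), and that Choquet's theorem \cite[Theorem 2.3.11]{AdHe} then yields capacitability of Suslin, hence Borel, sets. Your self-contained verification of continuity from above on decreasing compacts is also correct and valid for all $s\in\R$.
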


\begin{rem}\label{rem:Borel}
In Theorem~\ref{thm:NullitycapEquiv}\rf{b2}, 
Proposition~\ref{thm:NullityOfUnions}\rf{mm},
Proposition~\ref{cor:TensorProduct},
Theorem~\ref{thm:NullityHausdorff}\rf{gg}, 
Proposition~\ref{prop:ThresholdNullity}, 
Theorem~\ref{thm:CantorZoo} 
and Proposition~\ref{prop:Trace}
we require certain sets to be Borel.
This assumption is made solely to allow application of Proposition~\ref{thm:BorelCap}. Hence, if desired, throughout the paper ``Borel'' may be substituted by ``Suslin'', or possibly by a more general class of capacitable sets.
\end{rem}

The outer capacity $\ocap_{s,p}$ is also known to be countably subadditive for $s\geq 0$ \cite[\S2.3]{AdHe}. The authors are not aware of a similar result for $\cCap_{s,p}$, but the example in Remark \ref{rem:ExampleRQ}, together with Theorem \ref{thm:NullityCapEquiv}, shows that $\uCap_{s,p}$ is not subadditive (not even finitely) for $s<-n/p'$.
\begin{prop}[{\cite[Propositions 2.3.6 and 2.3.13]{AdHe}}]
\label{thm:CountableSubadd}
Let $1<p<\infty$, $s\geq 0$ and let $E_i\subset \R^n$, $i\in\N$. Then
\begin{align*}
\ocap_{s,p}\left(\bigcup_{i=1}^\infty E_i\right) \leq \sum_{i=1}^\infty \ocap_{s,p}(E_i).
\end{align*}
\end{prop}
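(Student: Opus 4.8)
The plan is to establish the inequality first for compact sets, then lift it to open sets, and finally pass to arbitrary sets through the definition of the outer capacity, relying throughout on the potential-theoretic description of $\capcomp_{s,p}$ recalled in Remark~\ref{rem:SvsD}. The case $s=0$ needs no argument: by Remark~\ref{rem:CapMeasure} the capacity $\ocap_{0,p}$ is the outer Lebesgue measure, whose countable subadditivity is classical. So from now on I assume $s>0$.

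First, I would prove that $\capcomp_{s,p}$ is finitely subadditive on compact sets. By Remark~\ref{rem:SvsD} (equivalently \cite[Proposition~2.3.13]{AdHe}), for compact $K\subset\R^n$,
\begin{align*}
\capcomp_{s,p}(K)=\inf\big\{\|f\|_{L^p}^p\,:\,f\in L^p,\ f\geq 0,\ \cJ_{-s}f\geq 1 \text{ on } K\big\},
\end{align*}
and, since $s>0$, $\cJ_{-s}$ is convolution with the non-negative Bessel kernel, hence order-preserving on non-negative functions. Given compact $K_1,K_2\subset\R^n$ and $\eps>0$, choose admissible $f_1,f_2\geq 0$ with $\|f_i\|_{L^p}^p\leq\capcomp_{s,p}(K_i)+\eps$, and set $f:=(f_1^p+f_2^p)^{1/p}$. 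Then $f\in L^p$ with $\|f\|_{L^p}^p=\|f_1\|_{L^p}^p+\|f_2\|_{L^p}^p$, and $f\geq f_i$ pointwise, so $\cJ_{-s}f\geq\cJ_{-s}f_i\geq 1$ on $K_i$, hence $\cJ_{-s}f\geq 1$ on $K_1\cup K_2$. Thus $\capcomp_{s,p}(K_1\cup K_2)\leq\capcomp_{s,p}(K_1)+\capcomp_{s,p}(K_2)+2\eps$, and letting $\eps\to0$ and iterating gives finite subadditivity of $\capcomp_{s,p}$.

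Next, I would lift this to open sets. For open $\Omega$, $\ucap_{s,p}(\Omega)=\sup\{\capcomp_{s,p}(K):K\subset\Omega\text{ compact}\}$ by Definition~\ref{def:cap}. If $\Omega=\Omega_1\cup\Omega_2$ with $\Omega_1,\Omega_2$ open and $K\subset\Omega$ is compact, then $K\setminus\Omega_1$ and $K\setminus\Omega_2$ are disjoint compact sets; separating them by disjoint open sets $U\supset K\setminus\Omega_1$, $V\supset K\setminus\Omega_2$, the sets $K_1:=K\setminus U$ and $K_2:=K\setminus V$ are compact with $K_1\subset\Omega_1$, $K_2\subset\Omega_2$ and $K_1\cup K_2=K$. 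Hence by the previous step $\capcomp_{s,p}(K)\leq\ucap_{s,p}(\Omega_1)+\ucap_{s,p}(\Omega_2)$, and taking the supremum over $K$ gives finite subadditivity of $\ucap_{s,p}$ on open sets. For a countable union $\Omega=\bigcup_i\Omega_i$ of open sets, any compact $K\subset\Omega$ lies in $\bigcup_{i=1}^N\Omega_i$ for some finite $N$, so $\capcomp_{s,p}(K)\leq\sum_{i=1}^N\ucap_{s,p}(\Omega_i)\leq\sum_{i=1}^\infty\ucap_{s,p}(\Omega_i)$, and the supremum over $K$ yields $\ucap_{s,p}\big(\bigcup_i\Omega_i\big)\leq\sum_i\ucap_{s,p}(\Omega_i)$. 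Finally, for arbitrary $E_i\subset\R^n$ and $\eps>0$, I would pick open $\Omega_i\supset E_i$ with $\ucap_{s,p}(\Omega_i)\leq\ocap_{s,p}(E_i)+\eps 2^{-i}$, possible by the definition of $\ocap_{s,p}$; then $\Omega:=\bigcup_i\Omega_i$ is open and contains $\bigcup_iE_i$, so $\ocap_{s,p}\big(\bigcup_iE_i\big)\leq\ucap_{s,p}(\Omega)\leq\sum_i\ucap_{s,p}(\Omega_i)\leq\sum_i\ocap_{s,p}(E_i)+\eps$, and $\eps\to0$ finishes the proof.

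The hard part is really concentrated in the compact-set step, and it is borrowed wholesale: it rests on the non-trivial equivalence between the test-function definition \rf{capdefn} and the Bessel-potential definition (Remark~\ref{rem:SvsD}, i.e.\ \cite[Proposition~2.3.13]{AdHe}), together with the positivity of the Bessel kernel. The only point needing care is the precise sense of ``$\cJ_{-s}f\geq 1$ on $K$'': this should be read pointwise for the canonical lower semicontinuous representative of the potential $\cJ_{-s}f$, as in \cite[\S2.2--2.3]{AdHe}, and with that convention the union arguments above are legitimate. Everything else is elementary point-set topology and the structure of inner and outer capacities in Definition~\ref{def:cap}.
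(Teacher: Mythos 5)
Your argument is correct. Note that the paper offers no proof of its own here: the statement is imported verbatim from Adams--Hedberg, and the citation to Propositions 2.3.6 and 2.3.13 of \cite{AdHe} \emph{is} the proof. What you have written is essentially a reconstruction of that cited argument, so there is no substantive divergence to report. The one structural difference worth noting is that in \cite{AdHe} the capacity of an \emph{arbitrary} set $E$ is defined directly as $\inf\{\|f\|_{L^p}^p : f\geq 0,\ \cJ_{-s}f\geq 1 \text{ on } E\}$, so countable subadditivity is immediate from the $f=(\sum_i f_i^p)^{1/p}$ device applied to arbitrary sets (that is Proposition 2.3.6); the compact--open--arbitrary bootstrapping you carry out by hand is, in their arrangement, absorbed into the proof that this direct definition coincides with the outer regularisation (Proposition 2.3.13). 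Your version, which keeps the paper's Definition \ref{def:cap} as primary and does the topological lifting explicitly, is equally valid and arguably better suited to the way the paper sets things up. Two trivial points of hygiene: in the final step you should dispose of the case where some $\ocap_{s,p}(E_i)=\infty$ (the inequality is then vacuous) before choosing the open sets $\Omega_i$; and the pointwise sense of $\cJ_{-s}f\geq 1$ is unproblematic since $G_s\ast f$ is defined everywhere in $[0,\infty]$ for $f\geq 0$ and is automatically lower semicontinuous, so your caveat about representatives is already covered.
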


The link between the analytical concept of capacity and the geometrical concept of fractal dimension is provided by the following theorem, which provides a partial characterisation of the sets of zero outer capacity ${\rm \overline{cap}}_{s,p}(E)$ for $0<s\leq n/p$ in terms of Hausdorff dimension. The theorem, which we state without proof, is essentially a rephrasing of the results in \cite[\S5.1]{AdHe} (specifically Theorems 5.1.9 and 5.1.13). Similar results can be found e.g.\ in \cite[\S10.4.3]{Maz'ya}, \cite[\S8]{Me:70} and \cite[Theorem 2.6.16]{Ziemer}. For a historical background to these results the reader is referred to \cite[\S5.7]{AdHe}.
\begin{thm}[{\cite[Theorems 5.1.9 and 5.1.13]{AdHe}}]
\label{thm:CapDimH}
Let $1<p<\infty$ and $E\subset\R^n$. 
\begin{enumerate}[(i)]
\item \label{b1} For $0\leq s<n/p$, if ${\rm dim_H}(E)< n-ps$ then ${\rm \overline{cap}}_{s,p}(E)=0$.
\item \label{a1} For $0 \le s\leq n/p$, if ${\rm \overline{cap}}_{s,p}(E)=0$ then ${\rm dim_H}(E)\leq n-ps$.
\end{enumerate}
\end{thm}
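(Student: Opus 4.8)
\textbf{Proof proposal for Theorem~\ref{thm:CapDimH}.}
The plan is to deduce this from the known relationship between Bessel capacities and Hausdorff content/measure, as developed in \cite[\S5.1]{AdHe}, rather than reproving it from first principles. Recall that for $0<\alpha\le n$ the Riesz and Bessel capacities $\ccap_{s,p}$ (with $sp=\alpha$, or more precisely $ps\le n$) are comparable, on small sets, to the Hausdorff content $\mathcal H_\infty^{n-ps}$: the fundamental fact is that $\ccap_{s,p}(E)=0$ if and only if the set is ``small'' in a precise capacitary sense, and this can be sandwiched between Hausdorff contents of two nearby exponents. The first step is therefore to invoke \cite[Theorem~5.1.9]{AdHe}, which gives, for $0\le s<n/p$, the implication that if $\mathcal H^{n-ps}(E)<\infty$ (and in particular if $\dimH E<n-ps$, since then $\mathcal H^{n-ps}(E)=0$) then $\ocap_{s,p}(E)=0$; together with the monotonicity $\dimH E<n-ps\Rightarrow \mathcal H^{d}(E)=0$ for any $d>n-ps$ close to $n-ps$, this yields part \rf{b1} directly. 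For part \rf{a1} one invokes the converse direction, \cite[Theorem~5.1.13]{AdHe}: if $\ocap_{s,p}(E)=0$ then $\mathcal H^{d}(E)=0$ for every $d>n-ps$, and hence $\dimH E\le n-ps$ by the definition of Hausdorff dimension as the infimum of exponents with vanishing Hausdorff measure.

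In more detail, I would organise the argument as follows. First, fix $1<p<\infty$ and $E\subset\R^n$. For \rf{b1}, assume $0\le s<n/p$ and $\dimH E<n-ps$; pick $d$ with $\dimH E<d<n-ps$, so $\mathcal H^d(E)=0$, hence $\mathcal H_\infty^d(E)=0$. Then apply the comparison estimate of \cite[\S5.1]{AdHe} relating $\ocap_{s,p}$ to Hausdorff content at the exponent $d$ (valid because $d<n-ps$ leaves ``room'' for the capacitary exponent): this gives $\ocap_{s,p}(E)=0$. For \rf{a1}, assume $0\le s\le n/p$ and $\ocap_{s,p}(E)=0$; by the reverse comparison in \cite[\S5.1]{AdHe}, for every $d>n-ps$ one has $\mathcal H^d_\infty(E)=0$ and hence (by a standard covering argument passing from content to measure) $\mathcal H^d(E)=0$; therefore $\dimH E\le d$ for all such $d$, so $\dimH E\le n-ps$. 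The boundary case $s=n/p$ in \rf{a1} is included because then $n-ps=0$ and the conclusion $\dimH E\le 0$ is exactly the statement that $\ocap_{n/p,p}(E)=0$ forces $E$ to have Hausdorff dimension zero, which again follows from \cite[Theorem~5.1.13]{AdHe}.

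The main obstacle — or rather the main point requiring care — is that there is a genuine gap between the two exponents $n-ps$ in \rf{b1} (strict inequality $\dimH E<n-ps$) and \rf{a1} (non-strict $\dimH E\le n-ps$), and this gap is \emph{not} an artifact of a lossy proof: as already noted after the statement of Theorem~\ref{thm:NullityHausdorff} and in Corollary~\ref{cor:NullitySets}, no complete characterisation of vanishing capacity purely in terms of Hausdorff dimension is possible, and the finer results of \cite[\S5.1, \S5.6]{AdHe} show that one really needs Hausdorff measure (and even that is insufficient in borderline cases). So the proof should not attempt to close this gap; it should simply cite the two one-directional results from \cite{AdHe} and verify that the dimension statements follow by the elementary monotonicity properties of Hausdorff measure in the exponent. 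Since the statement is explicitly ``a rephrasing of the results in \cite[\S5.1]{AdHe}'', the proof is essentially a bookkeeping exercise translating between the capacity/content language of \cite{AdHe} and the dimension language used here, and I would keep it short, flagging that the substantive content lies in \cite[Theorems~5.1.9 and~5.1.13]{AdHe}.
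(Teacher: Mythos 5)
Your proposal is correct and follows exactly the route the paper takes: the paper states Theorem~\ref{thm:CapDimH} without proof, as a direct rephrasing of \cite[Theorems 5.1.9 and 5.1.13]{AdHe}, and your reduction---$\dimH E<n-ps$ gives $\mathcal H^{n-ps}(E)=0<\infty$ so Theorem~5.1.9 yields $\ocap_{s,p}(E)=0$, while Theorem~5.1.13 gives $\mathcal H^d(E)=0$ for all $d>n-ps$ and hence $\dimH E\le n-ps$---is precisely the intended bookkeeping. Your observation that the gap between the strict and non-strict inequalities is intrinsic (cf.\ \cite[\S5.6.4]{AdHe}) also matches the footnote in the paper.
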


The behaviour of $\ocap_{s,p}$ under Lipschitz mappings is also understood 
\cite[\S5.2]{AdHe}.
\begin{thm}[{\cite[Theorem 5.2.1]{AdHe}}]
\label{thm:Lipschitz}
Let $E\subset\R^n$, and let $\Phi:E\to\R^n$ be a Lipschitz map with Lipschitz constant $L$. Then for $1<p<\infty$ and $0\leq s\leq n/p$ there exists a constant $a>0$, depending only on $n,p,s$ and $L$, such that
\begin{align*}
\ocap_{s,p}\left(\Phi(E)\right) \leq a\,\ocap_{s,p}(E).
\end{align*}
\end{thm}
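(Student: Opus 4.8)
\textbf{Proof proposal for Theorem~\ref{thm:Lipschitz}.}
The plan is to reduce the statement to the case where $\Phi$ is a global Lipschitz map on all of $\R^n$, and then to prove it by a covering argument, transplanting an efficient covering of $E$ by balls (realising the outer capacity up to a small factor) through $\Phi$ and using the countable subadditivity of $\ocap_{s,p}$ (Proposition~\ref{thm:CountableSubadd}) together with the standard monotonicity and scaling properties of capacities of balls. First I would invoke the Kirszbraun / McShane extension theorem to extend $\Phi$ to a map $\tilde\Phi:\R^n\to\R^n$ with the same Lipschitz constant $L$ (or at worst $\sqrt n\,L$, which only affects the constant $a$); since $\Phi(E)\subset\tilde\Phi(E)$ and capacity is monotone, it suffices to bound $\ocap_{s,p}(\tilde\Phi(E))$, so we may as well assume $\Phi$ is defined and $L$-Lipschitz on $\R^n$.

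The core of the argument is then the following. Fix $\eps>0$ and, using the definition of outer capacity and the fact that $\ocap_{s,p}$ is an outer (hence Choquet-type, monotone, countably subadditive) capacity, choose an open set $U\supset E$ with $\uC{}_{s,p}(U)\le\ocap_{s,p}(E)+\eps$, and then cover $U$ by a countable collection of balls $\{B_{r_i}(\bx_i)\}_{i\in\N}$ with $\sum_i \ocap_{s,p}(B_{r_i}(\bx_i))\le \uC{}_{s,p}(U)+\eps$ — such a cover exists because for open sets the outer capacity can be approximated from inside by compact sets and one can extract a Vitali-type or dyadic cover whose total capacity is controlled; alternatively one appeals directly to the known estimate (Proposition~\ref{prop:CapBall}, cited later in the paper) that $\ocap_{s,p}(B_r(\bx))$ behaves like $r^{n-ps}$ (up to constants, for $0\le s<n/p$, with a logarithmic correction or the value $\min\{1,\cdot\}$ type bound in the borderline case $s=n/p$). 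Now $\Phi(E)\subset \Phi(U)\subset\bigcup_i \Phi(B_{r_i}(\bx_i))\subset\bigcup_i B_{Lr_i}(\Phi(\bx_i))$, since $\Phi$ is $L$-Lipschitz. By countable subadditivity,
\begin{align*}
\ocap_{s,p}(\Phi(E)) \le \sum_{i=1}^\infty \ocap_{s,p}\big(B_{Lr_i}(\Phi(\bx_i))\big) \le c\,L^{\,n-ps}\sum_{i=1}^\infty \ocap_{s,p}\big(B_{r_i}(\bx_i)\big) \le c\,L^{\,n-ps}\big(\ocap_{s,p}(E)+2\eps\big),
\end{align*}
where the middle inequality uses the scaling/translation behaviour of the capacity of a ball. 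Letting $\eps\to 0$ gives the result with $a = cL^{n-ps}$ depending only on $n,p,s,L$.

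The main obstacle is controlling the capacity of balls under the rescaling $r_i\mapsto Lr_i$ uniformly over all radii, including small radii and the borderline exponent $s=n/p$. Exact scaling $\ocap_{s,p}(B_{Lr})=L^{n-ps}\ocap_{s,p}(B_r)$ holds only for the homogeneous Riesz capacities, not for the Bessel capacities relevant here; for Bessel capacities one only has two-sided estimates of the form $\ocap_{s,p}(B_r)\asymp r^{n-ps}$ for $0<r\le 1$ and $\ocap_{s,p}(B_r)\asymp 1$ for $r\ge 1$ (with the usual logarithmic modification when $ps=n$), so the comparison $\ocap_{s,p}(B_{Lr})\le c(L)\,\ocap_{s,p}(B_r)$ must be extracted from these estimates, splitting into the regimes $r$ small, $r$ comparable to $1$, and $r$ large, and absorbing constants into $a$. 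Since Adams--Hedberg establish precisely these ball estimates and the strong subadditivity needed, the argument goes through; the reader is referred to \cite[\S5.2]{AdHe} for the details of this reduction, the upshot being exactly the stated inequality with $a=a(n,p,s,L)$.
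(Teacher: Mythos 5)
The paper itself offers no proof of Theorem~\ref{thm:Lipschitz}: it is quoted directly from \cite[Theorem 5.2.1]{AdHe}, whose proof runs through the dual characterisation of $\ccap_{s,p}$ in terms of measures and nonlinear (Bessel/Wolff) potentials, comparing energies of measures transported through $\Phi$ using the radial monotonicity and doubling behaviour of the Bessel kernel $G_s$. Several ingredients of your argument are sound as far as they go: the Kirszbraun extension, the inclusion $\Phi(B_r(\bx))\subset B_{Lr}(\Phi(\bx))$, countable subadditivity (Proposition~\ref{thm:CountableSubadd}), and the two-sided ball estimates (Proposition~\ref{prop:CapBall}).

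The gap is the covering step: the claim that an open $U\supset E$ admits a countable ball cover $\{B_{r_i}(\bx_i)\}$ with $\sum_i\ocap_{s,p}(B_{r_i}(\bx_i))\le \ccap_{s,p}(U)+\eps$. No such cover exists in general, and this cannot be delegated to a ``Vitali-type or dyadic'' argument, because if it were true then (by Proposition~\ref{prop:CapBall}) $\ocap_{s,p}$ would be comparable, for $0<s<n/p$, to the Hausdorff content $\inf\{\sum_i r_i^{n-sp}\}$ over ball covers, and in particular the two set functions would have the same null sets. That is known to fail: by \cite[Theorem 5.1.9]{AdHe} (the dimension version is Theorem~\ref{thm:CapDimH}\rf{b1}) any compact $K$ with $\mathcal H^{n-sp}(K)<\infty$ has $\ccap_{s,p}(K)=0$, so take $K$ with $0<\mathcal H^{n-sp}(K)<\infty$ (a Cantor set of the appropriate dimension, cf.\ Theorem~\ref{thm:CantorCapacity}). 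Then every countable ball cover of $K$ --- hence of any open $U\supset K$ --- satisfies $\sum_i r_i^{n-sp}\ge c>0$ (positive Hausdorff content) and therefore, using the lower bound in Proposition~\ref{prop:CapBall} for $r_i\le 1$ and monotonicity for $r_i>1$, $\sum_i\ocap_{s,p}(B_{r_i}(\bx_i))\ge c'>0$; yet $\ocap_{s,p}(K)=0$ means $\ccap_{s,p}(U)$ can be made arbitrarily small. So the quantity you use as an upper bound for $\ocap_{s,p}(\Phi(E))$ is not controlled by $\ocap_{s,p}(E)$ even up to a constant factor: capacity is simply not computable from ball covers (the covering argument does work in the degenerate case $s=0$, where $\ocap_{0,p}$ is Lebesgue outer measure, which is perhaps the source of the intuition). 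A correct proof must instead work at the level of potentials: take a measure $\nu$ on a compact subset of $\Phi(E)$ with controlled energy $\|G_s*\nu\|_{L^{p'}}$, pull it back to a measure on $E$ via a measurable right inverse of $\Phi$, and compare energies using $|\Phi(\bx)-\Phi(\by)|\le L|\bx-\by|$ together with $G_s(Lr)\ge c_L\,G_s(r)$ --- which is precisely what \cite[\S5.2]{AdHe} does.
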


A further useful result on $\ocap_{s,p}$ is the following.
\begin{prop}[{\cite[Theorem 5.5.1]{AdHe}}]\label{prop:AH551}
Let $E\subset\R^n$ be bounded, and let $s,t\in\R$ and $1<p,q<\infty$ be such that either $0<tq<sp\le n$ or $p<q$ and $0<tq=sp\le n$.
Then, $\ocap_{s,p}(E)=0$ implies that $\ocap_{t,q}(E)=0$.
\end{prop}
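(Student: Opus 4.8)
The statement to prove is Proposition~\ref{prop:AH551}: for bounded $E\subset\R^n$, if $0<tq<sp\le n$ or ($p<q$ and $0<tq=sp\le n$), then $\ocap_{s,p}(E)=0$ implies $\ocap_{t,q}(E)=0$. This is quoted from \cite[Theorem~5.5.1]{AdHe}, so no proof is needed in the paper—but here is the plan one would follow to verify it.

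\medskip

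\textbf{Plan.} The natural approach is to reduce to compact sets and then compare the defining infima in \eqref{capdefn} directly, using the embedding \eqref{eq:Embedding} together with a scaling/boundedness argument. First I would note that since $E$ is bounded and outer capacity is defined via open sets shrinking onto $E$, it suffices to prove the implication $\ccap_{s,p}(K)=0 \Rightarrow \ccap_{t,q}(K)=0$ for compact $K$ contained in a fixed large ball $B_R(\mathbf{0})$ (one restricts the competitor functions $u\in\scrD$ in \eqref{capdefn} to have support in $B_{2R}(\mathbf{0})$, which changes the capacity only by a bounded factor, cf.\ the cutoff argument in Remark~\ref{rem:SvsD}). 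Then, given $u\in\scrD(B_{2R}(\mathbf{0}))$ with $u\ge 1$ near $K$ and $\|u\|_{H^{s,p}}$ small, the \emph{same} function is admissible for $\ccap_{t,q}(K)$, so I need a bound $\|u\|_{H^{t,q}}\lesssim \|u\|_{H^{s,p}}$ valid for functions supported in a fixed ball.

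\medskip

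\textbf{Key steps.} The case $tq<sp$, i.e.\ $t-s<n(1/q-1/p)$: here one wants the continuous embedding $H^{s,p}\subset H^{t,q}$. By \eqref{eq:Embedding} this holds whenever $1<p\le q<\infty$ and $s-t\ge n(1/p-1/q)\ge 0$; rewriting, $H^{s,p}\hookrightarrow H^{t,q}$ when $q\ge p$ and $sp-tq\ge n(q-p)/q\cdot$(something)—so one must be slightly careful about which inequality one is in. When $q\le p$, the Sobolev-type embedding runs the other way on all of $\R^n$, but for compactly supported functions the inclusion $H^{s,p}_K\subset H^{t,q}_K$ holds with $t=s$ (by \eqref{eq:EmbeddingCompact}, since $L^p_{\mathrm{loc}}\subset L^q_{\mathrm{loc}}$ for $q\le p$), and then one drops regularity from $s$ to $t$ using $H^{s,p}_K\subset H^{t,p}_K$ trivially for $t\le s$; combining gives $\|u\|_{H^{t,q}}\lesssim \|u\|_{H^{s,p}}$ for $\supp u\subset B_{2R}(\mathbf{0})$, whenever $tq<sp$. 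The borderline case $tq=sp$ with $p<q$ is exactly the endpoint of \eqref{eq:Embedding} with $s-t=n(1/p-1/q)$, so the embedding $H^{s,p}\subset H^{t,q}$ applies directly (no compact support needed). In each subcase, passing to the infimum over admissible $u$ gives $\ccap_{t,q}(K)\le C\,\ccap_{s,p}(K)$ for $K\subset B_R(\mathbf{0})$, hence $\ccap_{s,p}(K)=0\Rightarrow\ccap_{t,q}(K)=0$; taking suprema over compact $K\subset E$ and then infima over open neighbourhoods recovers the statement for $\ocap$.

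\medskip

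\textbf{Main obstacle.} The delicate point is organizing the parameter ranges so that one always lands in a regime covered either by the global embedding \eqref{eq:Embedding} (when $q\ge p$) or by the compact-support embedding \eqref{eq:EmbeddingCompact} (when $q< p$), and checking that the two cover the whole hypothesis region $\{tq<sp\}\cup\{p<q,\ tq=sp\}$ with $0<tq$, $sp\le n$. The condition $sp\le n$ is used implicitly to stay below the Sobolev-embedding-into-continuous-functions threshold (where capacities become trivial in a different way); and the hypothesis $0<tq$ (equivalently $t>0$) ensures we are genuinely comparing two Bessel-capacities and not measure. Keeping the constant $C$ uniform over $K\subset B_R(\mathbf{0})$—which is automatic from the embedding constant plus the fixed cutoff—is the only quantitative care required. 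Once the bookkeeping is done the argument is short; the real content is \eqref{eq:Embedding}–\eqref{eq:EmbeddingCompact} themselves, which are cited.
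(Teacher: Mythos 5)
Your central step --- that any admissible test function for $\capcomp_{s,p}(K)$ is admissible for $\capcomp_{t,q}(K)$ with controlled norm, i.e.\ that $\|u\|_{H^{t,q}}\le C\|u\|_{H^{s,p}}$ for all $u\in\scrD$ supported in a fixed ball, throughout the hypothesis region --- is false, and the failure is not a matter of bookkeeping: the region $\{0<tq<sp\le n\}\cup\{p<q,\ 0<tq=sp\le n\}$ is strictly larger than the union of the regions covered by \eqref{eq:Embedding} and \eqref{eq:EmbeddingCompact}. Concretely, take $n=1$, $p=2$, $q=4$, $s=2/5$, $t=1/5$. Then $p<q$ and $0<tq=sp=4/5\le n$, so the borderline hypothesis holds; but $s-t=1/5<1/4=n(1/p-1/q)$, so \eqref{eq:Embedding} does not apply. (Your claim that $tq=sp$ with $p<q$ ``is exactly the endpoint of \eqref{eq:Embedding}'' conflates two different conditions: $tq=sp$ is not $s-t=n(1/p-1/q)$.) Moreover no constant $C$ can exist even for functions supported in a fixed ball: since $t-n/q=-1/20>-1/10=s-n/p$, the dilates $u_\lambda(x)=u(\lambda x)$ of a fixed bump $u$ satisfy $\|u_\lambda\|_{H^{t,q}}/\|u_\lambda\|_{H^{s,p}}\sim\lambda^{(t-n/q)-(s-n/p)}\to\infty$ as $\lambda\to\infty$. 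The open case fails too: for $n=1$, $p=10$, $q=2$, $s=1/10$, $t=2/5$ one has $0<tq=4/5<1=sp\le n$ but $t>s$, so neither \eqref{eq:Embedding} nor \eqref{eq:EmbeddingCompact} (which for $q<p$ requires $s\ge t$) gives the needed inclusion. In short, the capacity comparison of \cite[Theorem 5.5.1]{AdHe} is genuinely stronger than what the function-space embeddings yield; the correct statement there is a power-type inequality $\capcomp_{t,q}(K)\lesssim \capcomp_{s,p}(K)^{\kappa}$ for sets of diameter at most one (consistent with Proposition \ref{prop:CapBall}, where the two capacities of $B_r$ scale as $r^{n-tq}$ and $r^{n-sp}$), and its proof uses the potential-theoretic machinery of \cite[Chapter 5]{AdHe}, not Sobolev embeddings.

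For comparison, the paper does not attempt a proof at all: the proposition is quoted from \cite[Theorem 5.5.1]{AdHe}, and the only argument supplied is the remark following the statement, namely that the diameter-at-most-one restriction in \cite{AdHe} can be dropped by an affine change of variables because only the vanishing of the capacities is at issue. If you want a self-contained argument you would have to reproduce the comparison of Bessel capacities with Riesz capacities and Hausdorff contents from that chapter; the embedding route cannot cover the stated parameter range.
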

Note that \cite[Theorem 5.5.1]{AdHe} requires $E$ to have diameter at most one, as that theorem deals with the actual values of the capacities.
Since here we are only concerned with the vanishing of the same capacities, by affine scaling the result holds for any bounded set.

To prove Proposition \ref{thm:NullityOfUnions}\rf{mm}, Theorem \ref{thm:NullityHausdorff}, and Theorem \ref{thm:Domains}\rf{kk}, which was the goal of this section, we have to link the concept of nullity (which by Theorem \ref{thm:NullityCapEquiv} concerns $\cCap_{s,p}$) with the results of Proposition \ref{thm:BorelCap} and Theorem \ref{thm:Lipschitz} (which concern $\ccap_{s,p}$). The link, as was hinted at just before Remark \ref{rem:SvsD}, is that the two capacities $\ccap_{s,p}$ and $\cCap_{s,p}$ are equivalent, at least for $s\geq 0$, in the sense of the following definition. 
\begin{defn}
\label{def:CapEquiv}
Let $\Ccomp$ and $\tCcomp$ be set functions satisfying Definition \ref{def:cap}. The resulting capacities $C$ and $\tilde C$ are said to be \emph{equivalent} if there exist constants $a,b>0$ such that
\begin{align*}
a \Ccomp(K) \leq \tCcomp(K) \leq b\Ccomp (K), \qquad \textrm{for all compact }K\subset \R^n.
\end{align*}
\end{defn}

\begin{prop}
\label{prop:CapEquiv}
If two capacities $C$ and $\tilde C$ are equivalent then, for any $E\subset \R^n$,
\begin{align*}
a \uC(E) \leq \utC(E) \leq b\uC (E), \qquad
a \oC(E) \leq \otC(E) \leq b\oC (E),
\end{align*}
where $a,b$ are the constants in Definition \ref{def:CapEquiv}. 
In particular, $E$ is capacitable for $\Ccomp$ with $C(E)=0$ if and only if $E$ is capacitable for $\tCcomp$ with $\tilde{C}(E)=0$.
\end{prop}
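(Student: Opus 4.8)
The plan is to unwind the definitions of inner and outer capacity and push the two-sided bound $a\,\Ccomp(K)\le\tCcomp(K)\le b\,\Ccomp(K)$ through the suprema and infima that build $\underline C$ and $\overline C$ from $\Ccomp$. First I would fix $E\subset\R^n$ and treat the inner capacity. By definition $\utC(E)=\sup\{\tCcomp(K):K\subset E,\ K\text{ compact}\}$; for each such compact $K$ we have $\tCcomp(K)\le b\,\Ccomp(K)\le b\,\uC(E)$, and taking the supremum over $K$ gives $\utC(E)\le b\,\uC(E)$. The reverse inequality $a\,\uC(E)\le\utC(E)$ is entirely symmetric (using $a\,\Ccomp(K)\le\tCcomp(K)$), so the chain $a\,\uC(E)\le\utC(E)\le b\,\uC(E)$ holds for every $E$.

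Next I would handle the outer capacity by applying the inner-capacity bound just obtained, since $\overline C$ is defined as an infimum of $\underline C$ over open supersets. For any open $\Omega\supset E$ we have $\utC(\Omega)\le b\,\uC(\Omega)$, and taking the infimum over all such $\Omega$ yields $\otC(E)\le b\,\oC(E)$; the bound $a\,\oC(E)\le\otC(E)$ follows the same way from $a\,\uC(\Omega)\le\utC(\Omega)$. This gives $a\,\oC(E)\le\otC(E)\le b\,\oC(E)$ for every $E\subset\R^n$, which is the second displayed chain.

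Finally, for the ``in particular'' statement: suppose $E$ is capacitable for $\Ccomp$ with $C(E)=0$, i.e.\ $\uC(E)=\oC(E)=0$. Then the two chains force $\utC(E)=\otC(E)=0$ (squeezed between $0$ and $b\cdot 0$), so $E$ is capacitable for $\tCcomp$ with $\tilde C(E)=0$; the converse is identical with the roles of $C$ and $\tilde C$ swapped (equivalently, replace $a,b$ by $1/b,1/a$).

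I do not expect any real obstacle here — the proof is a routine monotonicity-of-$\sup/\inf$ argument, and the only thing to be careful about is that the bound for $\overline C$ must be derived from the bound for $\underline C$ (not directly from the bound on $\Ccomp$), since $\overline C$ is built from $\underline C$ rather than from $\Ccomp$ directly; but that is exactly the order in which the steps above are arranged.
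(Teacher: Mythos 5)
Your proof is correct, and the paper in fact states Proposition \ref{prop:CapEquiv} without proof, treating it as routine; your argument is exactly the standard unfolding of Definition \ref{def:cap} that the authors leave implicit. The one point you flag --- that the outer-capacity bound must be obtained by passing the inner-capacity bound through the infimum over open supersets, rather than directly from the bound on $\Ccomp$ --- is indeed the only subtlety, and you handle it in the right order.
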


\begin{thm}[{\cite[eq. (2.7.4), Corollary~3.3.4, and Notes to \S2.9 and \S3.8]{AdHe}}] 
\label{thm:CapEquiv}
For every $1<p<\infty$ and $s\geq 0$ the capacities $\ccap_{s,p}$ and $\cCap_{s,p}$ are equivalent. 
Specifically, for any $s\geq 0$ there exists $b\geq 1$ such that, for all compact $K$,
\begin{align*}
\capcomp_{s,p}(K)\leq \Capcomp_{s,p}(K)\leq b\, \capcomp_{s,p}(K).
\end{align*}
\end{thm}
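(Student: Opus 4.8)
The statement to prove is Theorem~\ref{thm:CapEquiv}: for $s\geq 0$ and $1<p<\infty$, the capacities $\ccap_{s,p}$ and $\cCap_{s,p}$ are equivalent, with $\capcomp_{s,p}(K)\leq \Capcomp_{s,p}(K)\leq b\,\capcomp_{s,p}(K)$ for all compact $K$. Since this result is quoted directly from \cite{AdHe} (eq.~(2.7.4), Corollary~3.3.4, and the Notes to \S2.9 and \S3.8), I would present it as a citation rather than reproving it from scratch; but for the reader's convenience I would sketch the mechanism. The left-hand inequality is immediate from the definitions \rf{capdefn}--\rf{Capdefn}, since any $u\in\scrD$ with $u=1$ in a neighbourhood of $K$ also satisfies $u\geq 1$ there. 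The content is the right-hand inequality.

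\textbf{The two ingredients.} First I would recall (from \cite[\S2.7]{AdHe}) the characterisation of $\capcomp_{s,p}$ as a ``strong-type'' capacity: for $s>0$ one has $\capcomp_{s,p}(K)=\inf\{\|f\|_{L^p}^p : f\in L^p,\ f\geq 0,\ \cJ_s f\geq 1 \text{ on } K\}$ (the equivalence with \rf{capdefn} being \cite[Proposition~2.3.13]{AdHe}, as noted in Remark~\ref{rem:SvsD}), and that the infimum is attained by a unique \emph{capacitary potential} of the form $U^K_{s,p}=\cJ_s f^K$ with $f^K=(\cJ_s\mu^K)^{p'-1}$ for a suitable ``capacitary measure'' $\mu^K$ supported on $K$; crucially, $U^K_{s,p}\geq 1$ quasi-everywhere on $K$ and $U^K_{s,p}\leq 1$ everywhere (this boundedness-by-$1$ is the key regularity property, cf.\ the maximum principle for Bessel potentials). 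Second, I would invoke the fact that Bessel capacities admit a ``truncation'' that converts a potential which is $\geq 1$ (q.e.) on $K$ and $\leq 1$ everywhere into something $=1$ on a neighbourhood, at the cost of a bounded multiplicative constant: concretely, $\min\{U^K_{s,p},1\}$ or a smoothed variant $\Phi\circ U^K_{s,p}$ (with $\Phi$ a fixed smooth function, $\Phi(t)=1$ for $t\geq 1$, $0\leq\Phi\leq 1$, $\Phi$ Lipschitz) has $\Hsp$-norm controlled by $c\,\|U^K_{s,p}\|_{\Hsp}$ via the boundedness of such nonlinear superposition (truncation) operators on $\Hsp$ for $s\geq 0$ — this is exactly the nontrivial analytic input, and it is where Corollary~3.3.4 of \cite{AdHe} (on the ``quasi-continuity'' and truncation behaviour of Bessel potentials) enters. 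Approximating the truncated potential by test functions in $\scrD$ that equal $1$ on a neighbourhood of $K$ then yields a competitor for $\Capcomp_{s,p}(K)$, giving $\Capcomp_{s,p}(K)\leq b\,\capcomp_{s,p}(K)$.

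\textbf{Main obstacle.} The delicate point is the boundedness of truncation on $\Hsp=F^s_{p2}$ for general $s\geq 0$: unlike for integer-order $W^{m,p}$, the map $u\mapsto \min\{u,1\}$ is \emph{not} bounded on $\Hsp$ for all $s>0$, so one cannot naively truncate an arbitrary $\Hsp$ function. What saves the argument is that one truncates the \emph{capacitary potential} specifically, which already lies between $0$ and $1$ and is built from a single Bessel potential of an $L^p$ function; for such potentials the relevant smoothing/truncation estimate does hold (this is the substance of \cite[Corollary~3.3.4]{AdHe} and the surrounding theory of thin sets and quasicontinuity). Because establishing this from first principles would require developing a substantial portion of nonlinear potential theory, I would not attempt it here and would instead state the theorem with the precise references, noting only that the equivalence constant $b$ depends on $s$, $p$ and $n$ but not on $K$ — which is all that is needed for the applications in \S\ref{sec:Capacity}, where Proposition~\ref{prop:CapEquiv} is used to transfer capacitability and the vanishing of capacity between $\ccap_{s,p}$ and $\cCap_{s,p}$.
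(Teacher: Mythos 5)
The paper gives no proof of this theorem: it is stated purely as a citation of \cite{AdHe} (eq.~(2.7.4), Corollary~3.3.4, and the Notes to \S2.9 and \S3.8), so your decision to quote it with precise references rather than reprove it is exactly what the authors do, and your remark that the left-hand inequality is immediate from the definitions \rf{capdefn}--\rf{Capdefn} is correct. One inaccuracy in your optional sketch of the right-hand inequality is worth flagging: the capacitary potential $U^K_{s,p}$ is \emph{not} bounded by $1$ in general. The maximum principle with constant $1$ holds only in the classical range (e.g.\ $p=2$, $0<s\le 1$, which is precisely why $b$ can then be taken equal to $1$, cf.\ the remark following Theorem~\ref{thm:CapEquiv} and the strict inequality exhibited in Appendix~\ref{app:CapacityEquivalence} for $s=p=2$); for general $s\ge 0$ and $1<p<\infty$ one only has the \emph{boundedness principle} $U^K_{s,p}\le A(s,p,n)$ with $A\ge 1$, and it is this constant, fed into the superposition estimate for the smooth truncation $\Phi\circ U^K_{s,p}$, that produces the equivalence constant $b>1$. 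This does not undermine your proposal — the mechanism you describe (capacitary potential, uniform $L^\infty$ bound, smooth truncation, approximation by elements of $\scrD$ equal to $1$ near $K$) is indeed the one underlying the cited results — but the ``boundedness by $1$'' claim should be replaced by boundedness by $A(s,p,n)$.
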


\begin{rem}
It is noted in \cite[Notes to \S2.7]{AdHe}) that results due to Deny \cite[Th\'eor\`eme II:3, p.~144]{De:50} imply that for $p=2$ and $0<s\leq 1$, the constant $b$ in \rf{thm:CapEquiv} can be taken to be $1$, so that $\ccap_{s,p}$ and $\cCap_{s,p}$ coincide (we have already noted this result for $s=0$ in Remark \ref{rem:CapMeasure}). An interesting \textbf{open question} concerns the extent to which this result generalises to $p\neq 2$ and/or $s\not\in [0,1]$. In Appendix \ref{app:CapacityEquivalence} we demonstrate that the result is certainly not true for $p=2$ and $s=2$, using an explicit formula for the norm in the restriction space $H^{2,2}(\Omega)$ recently presented in \cite{InterpolationCWHM}.
\end{rem}

Theorem \ref{thm:NullityCapEquiv}, Proposition \ref{thm:BorelCap}, Proposition \ref{prop:CapEquiv} and Theorem \ref{thm:CapEquiv} then provide the following key result, which allows us to complete the proofs of the remaining results stated in \S\ref{sec:MainResults}.
\begin{thm}
\label{thm:NullitycapEquiv}
Let $1<p<\infty$, $s\leq 0$ and $E\subset\R^n$. 
\begin{enumerate}[(i)]
\item \label{a2} If $\ocap_{-s,p'}(E)=0$ then $E$ is $(s,p)$-null.
\item \label{b2} If $E$ is Borel, then $\ocap_{-s,p'}(E)=\ccap_{-s,p'}(E)=0$ if and only if $E$ is $(s,p)$-null.
\end{enumerate}
\end{thm}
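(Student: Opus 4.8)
The plan is to derive Theorem~\ref{thm:NullitycapEquiv} by threading together the four ingredients indicated just above its statement, namely Theorem~\ref{thm:NullityCapEquiv} (nullity $\Leftrightarrow$ vanishing of $\uCap_{-s,p'}$), Theorem~\ref{thm:CapEquiv} (equivalence of $\ccap_{s,p}$ and $\cCap_{s,p}$ for nonnegative order), Proposition~\ref{prop:CapEquiv} (equivalence of capacities passes to inner and outer capacities, and in particular preserves the property of being capacitable with zero capacity), and Proposition~\ref{thm:BorelCap} (Borel sets are capacitable for $\ccap_{s,p}$ when $s\ge0$). Note that throughout we are in the regime $s\le 0$, so $-s\ge 0$ and $p'\in(1,\infty)$, which is exactly the range in which all of these cited results apply.

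For part~\rf{a2}: I would start from the hypothesis $\ocap_{-s,p'}(E)=0$. Since $\uCap_{-s,p'}(E)\le \oCap_{-s,p'}(E)$ always (Definition~\ref{def:cap}), and since by Proposition~\ref{prop:CapEquiv} applied to the equivalent capacities $\ccap_{-s,p'}$ and $\cCap_{-s,p'}$ (Theorem~\ref{thm:CapEquiv}, using $-s\ge 0$) we have $\oCap_{-s,p'}(E)\le b\,\ocap_{-s,p'}(E)=0$, it follows that $\uCap_{-s,p'}(E)=0$. Then Theorem~\ref{thm:NullityCapEquiv} gives that $E$ is $(s,p)$-null. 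This direction needs no Borel assumption because the inequality $\uCap\le \oCap$ is unconditional.

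For part~\rf{b2}: assume $E$ is Borel. By Proposition~\ref{thm:BorelCap} (with order $-s\ge0$ and integrability $p'$), $E$ is capacitable for $\ccap_{-s,p'}$, so $\ucap_{-s,p'}(E)=\ocap_{-s,p'}(E)=:\ccap_{-s,p'}(E)$. By Proposition~\ref{prop:CapEquiv} applied to the equivalent pair $\ccap_{-s,p'}$, $\cCap_{-s,p'}$, the vanishing of $\ccap_{-s,p'}(E)$ is equivalent to $\uCap_{-s,p'}(E)=0$ (the proposition's last sentence states precisely that $E$ is capacitable for one with zero capacity iff it is for the other with zero capacity; combined with capacitability of $E$ for $\ccap$ this says $\ccap_{-s,p'}(E)=0 \iff \uCap_{-s,p'}(E)=0$). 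Finally Theorem~\ref{thm:NullityCapEquiv} converts $\uCap_{-s,p'}(E)=0$ into the statement that $E$ is $(s,p)$-null. Stringing the equivalences together yields $\ocap_{-s,p'}(E)=\ccap_{-s,p'}(E)=0 \iff E$ is $(s,p)$-null, as claimed.

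I do not anticipate a genuine obstacle here: the theorem is essentially a bookkeeping corollary of results already assembled in this section. The only points requiring a little care are (i) making sure the order parameter is in the correct range — we use $-s\ge 0$, which is guaranteed by the hypothesis $s\le0$, so Theorem~\ref{thm:CapEquiv} and Proposition~\ref{thm:BorelCap} apply — and (ii) being precise about which capacities are merely comparable ($\ccap$ versus $\cCap$) and which coincide on a given set ($\ucap=\ocap$ on Borel sets via capacitability). The proof is short; if anything, the ``hard part'' is purely expository, namely stating the chain of implications cleanly so the reader sees that part~\rf{a2} costs nothing beyond $\uCap\le\oCap$ plus one-sided comparability, while the equivalence in part~\rf{b2} additionally consumes the capacitability of Borel sets for $\ccap_{s,p}$.
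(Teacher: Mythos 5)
Your proposal is correct and follows exactly the route the paper intends: the paper gives no written proof, merely asserting that Theorem~\ref{thm:NullityCapEquiv}, Proposition~\ref{thm:BorelCap}, Proposition~\ref{prop:CapEquiv} and Theorem~\ref{thm:CapEquiv} combine to give the result, and your bookkeeping supplies precisely that combination. One tiny polish: for the reverse implication in part~(\ref{b2}) it is cleaner to use the displayed inequality $\ucap_{-s,p'}(E)\le\uCap_{-s,p'}(E)$ from Proposition~\ref{prop:CapEquiv} (together with capacitability of the Borel set $E$ for $\capcomp_{-s,p'}$) than the proposition's final sentence, since $\uCap_{-s,p'}(E)=0$ alone does not assert capacitability of $E$ for $\Capcomp_{-s,p'}$.
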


\begin{proof}[Proof of Proposition \ref{thm:NullityOfUnions}\rf{mm}]
This follows immediately from Theorem \ref{thm:NullitycapEquiv} and Proposition~\ref{thm:CountableSubadd}.
\end{proof}

\begin{proof}[Proof of Theorem \ref{thm:NullityHausdorff}]
Part \rf{hh} follows from Theorem \ref{thm:NullitycapEquiv}\rf{a2} and Theorem \ref{thm:CapDimH}\rf{b1}. 
Part \rf{gg} follows from Theorem \ref{thm:NullitycapEquiv}\rf{b2} and Theorem \ref{thm:CapDimH}\rf{a1}. 
\end{proof}

\begin{proof}[Proof of Theorem \ref{thm:Domains}\rf{kk}]
By applying a suitable smooth cutoff and a coordinate rotation, it suffices to consider the case where $\Omega=\{\bx\in\R^n:x_n <\phi(x_1,\ldots,x_{n-1})\}$, where $\phi:\R^{n-1}\to \R$ is Lipschitz. Defining $\R^n_0:=\{\bx\in\R^n:x_n=0\}$, the map $\Phi:\R^n_0\subset\R^n\to\partial\Omega\subset\R^n$ is Lipschitz with a Lipschitz inverse (given by the orthogonal projection of $\partial\Omega$ onto $\R^n_0$). Hence by Theorem \ref{thm:Lipschitz} and Theorem \ref{thm:NullitycapEquiv}\rf{b2}, the closed set $\partial\Omega$ is $(s,p)$-null if and only if the hyperplane $\R^n_0$ is $(s,p)$-null, which by Theorem \ref{thm:dSet} holds if and only if $s\geq -1/p'$.
\end{proof}

Capacities can rarely be computed exactly. (An exception is provided by Appendix \ref{app:CapacityEquivalence}.) 
But estimates are available for the capacity of balls, which will be of use to us in \S\ref{subsec:Examplessgreaterthanzero}.

\begin{prop}[{\cite[Propositions 5.1.2--4]{AdHe}}]
\label{prop:CapBall}
Let $1<p<\infty$. Given $0<s<n/p$, there exist constants $0<A_{s,p,n}<B_{s,p,n}$, depending on $s$, $p$ and $n$, such that
\begin{align*}
A_{s,p,n} r^{n-sp} \leq \ccap_{s,p}\big(B_r(\bx)\big)\leq B_{s,p,n} r^{n-sp}, \qquad 0<r\leq 1,\,\bx\in\R^n.
\end{align*}
For $s=n/p$, given $c>1$ there exists a constant $C_{c,p,n}>1$, depending on $c$, $p$ and $n$, such that 
\begin{align*}
\frac{1}{C_{c,p,n}}\big(\log(c/r)\big)^{1-p} 
\leq \ccap_{n/p,p}\big(B_r(\bx)\big) 
\leq C_{c,p,n} \big(\log{(c/r)}\big)^{1-p}, \qquad 0<r\leq 1, \,\bx\in\R^n.
\end{align*}
\end{prop}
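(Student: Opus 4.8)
The plan is to work with the $L^p$-form of $\ccap_{s,p}$ recalled at the end of Remark~\ref{rem:SvsD}: for $s>0$ and compact $K$,
\[
\ccap_{s,p}(K)=\inf\big\{\|f\|_{L^p}^p:\ f\in L^p,\ f\ge 0,\ (G_s*f)(\by)\ge 1\text{ for all }\by\in K\big\},
\]
where $G_s$ is the Bessel kernel, normalised so that $G_s*f=\cJ_{-s}f$ and hence $\|G_s*f\|_{H^{s,p}}=\|f\|_{L^p}$ (see \cite[Proposition~2.3.13]{AdHe}). For the lower bounds we use the elementary dual inequality: for any nonzero $\mu\in M^+(\R^n)$ with $\supp\mu\subseteq K$, if $G_s*f\ge 1$ on $K$ then $\mu(K)\le\int f\,(G_s*\mu)\,\rd\bx\le\|f\|_{L^p}\|G_s*\mu\|_{L^{p'}}$, so that
\[
\ccap_{s,p}(K)^{1/p}\ge\mu(K)/\|G_s*\mu\|_{L^{p'}}.
\]
By translation invariance of $\|\cdot\|_{H^{s,p}}$ we may take $\bx=\mathbf{0}$ and write $B_r=B_r(\mathbf{0})$. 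The only external ingredient is the classical short-range behaviour of the Bessel kernel: since $0<s\le n/p<n$, one has $G_s(\bz)\asymp|\bz|^{s-n}$ for $0<|\bz|\le 2$, and $G_s$ decays exponentially at infinity.

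\emph{Subcritical case, $0<s<n/p$.} For the upper bound take $f_r:=(c_0 r^{s})^{-1}\mathbf{1}_{B_{2r}}$: the kernel bound gives, for $\by\in B_r$ and $0<r\le 1$, $(G_s*\mathbf{1}_{B_{2r}})(\by)\gtrsim\int_0^r\rho^{s-1}\,\rd\rho\asymp r^{s}$, so a suitable $c_0=c_0(n,s)$ makes $G_s*f_r\ge 1$ on $B_r$, whence $\ccap_{s,p}(B_r)\le\|f_r\|_{L^p}^p\asymp r^{-sp}|B_{2r}|\asymp r^{n-sp}$. For the lower bound, feed $\mu=\mathbf{1}_{B_r}\,\rd\bx$ into the dual inequality: splitting $(G_s*\mathbf{1}_{B_r})(\by)$ according to whether $\by\in B_{4r}$, $\by\in B_2\setminus B_{4r}$, or $\by\notin B_2$, and using the kernel bounds, one obtains $\|G_s*\mathbf{1}_{B_r}\|_{L^{p'}}\lesssim r^{s+n/p'}$---here the hypothesis $s<n/p$ is precisely what makes $p'(s-n/p)<0$ and $p'(n/p-s)>0$, so that every piece is $\lesssim r^{sp'+n}$. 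Hence $\ccap_{s,p}(B_r)^{1/p}\gtrsim r^{n}/r^{s+n/p'}=r^{n/p-s}$, i.e.\ $\ccap_{s,p}(B_r)\gtrsim r^{n-sp}$. (An alternative is to pass through the homogeneous Riesz capacity $\dot C_{s,p}$, which scales exactly as $\dot C_{s,p}(B_r)=r^{n-sp}\dot C_{s,p}(B_1)$, satisfies $0<\dot C_{s,p}(B_1)<\infty$ for $0<s<n/p$, and obeys $\ccap_{s,p}(K)\asymp\dot C_{s,p}(K)$ for $K\subset B_1$ by comparability of the Bessel and Riesz kernels at short range.)

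\emph{Critical case, $s=n/p$.} Here the subcritical construction degenerates---the same $f_r$ only yields $\ccap_{n/p,p}(B_r)\lesssim 1$, and indeed $\dot C_{n/p,p}(B_1)=0$---so a logarithmically graded competitor is needed. For the upper bound take $f_r:=c_r\,|\bz|^{-n/p}\,\mathbf{1}_{B_1\setminus B_r}$; since $-n/p'-n/p=-n$, the kernel bound gives $(G_{n/p}*f_r)(\by)\gtrsim c_r\int_r^1\rho^{-1}\,\rd\rho=c_r\log(1/r)$ for $\by\in B_r$, so $c_r\asymp 1/\log(1/r)$ makes $G_{n/p}*f_r\ge 1$ on $B_r$, while $\|f_r\|_{L^p}^p=c_r^p\int_r^1\rho^{-1}\,\rd\rho=c_r^p\log(1/r)\asymp(\log(1/r))^{1-p}$; thus $\ccap_{n/p,p}(B_r)\lesssim(\log(1/r))^{1-p}\asymp(\log(c/r))^{1-p}$ for small $r$, the range $r\asymp 1$ being trivial since $\ccap_{n/p,p}(B_r)\le\ccap_{n/p,p}(B_1)$. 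For the lower bound, feed $\mu=\mathbf{1}_{B_r}\,\rd\bx$ into the dual inequality as before: because $G_{n/p}(\bz)\asymp|\bz|^{-n/p'}$, the contribution of the annulus $B_2\setminus B_{4r}$ now produces a logarithm (an integral $\int_{4r}^2\rho^{-1}\,\rd\rho=\log(1/(2r))$ appears), giving $\|G_{n/p}*\mathbf{1}_{B_r}\|_{L^{p'}}\lesssim r^{n}(\log(1/r))^{1/p'}$ and hence $\ccap_{n/p,p}(B_r)\gtrsim(\log(1/r))^{1-p}\asymp(\log(c/r))^{1-p}$ for small $r$; the range $r\asymp 1$ is again handled by monotonicity together with $\ccap_{n/p,p}(B_{1/2})>0$.

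The subcritical estimates are, modulo the kernel asymptotics, a routine scaling computation, and the main obstacle is the critical case $s=n/p$: exact scaling fails (the homogeneous Riesz capacity of a ball vanishes there), one is forced onto logarithmically graded trial objects, and it is the book-keeping in the two logarithmic computations---tracking constants so that the exponent $1-p$ emerges precisely and matching the cut-off scales---that demands the real care. The auxiliary constant $c>1$ only serves to keep $\log(c/r)$ bounded away from $0$ as $r\to 1$, and is absorbed into $C_{c,p,n}$.
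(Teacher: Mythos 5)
The paper offers no proof of this proposition: it is quoted verbatim from \cite[Propositions 5.1.2--4]{AdHe}, so there is no internal argument to compare against. Your blind proof is, as far as I can check, correct, and it follows the standard nonlinear-potential-theory route that Adams--Hedberg themselves use: the $L^p$-form of $\ccap_{s,p}$ via the Bessel kernel $G_s$, explicit trial functions ($r^{-s}\mathbf{1}_{B_{2r}}$ in the subcritical case, the logarithmically graded $|\bz|^{-n/p}\mathbf{1}_{B_1\setminus B_r}$ at the critical exponent) for the upper bounds, and the Fubini--H\"older duality inequality with $\mu=\mathbf{1}_{B_r}\,\rd\bx$ for the lower bounds; the exponent bookkeeping in all four estimates checks out, including the appearance of $(\log(1/r))^{1-p}$ from $\|G_{n/p}*\mathbf{1}_{B_r}\|_{L^{p'}}\asymp r^n(\log(1/r))^{1/p'}$. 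Two small points worth making explicit if this were to be written up: (i) the set function $\capcomp_{s,p}$ is defined on compact sets and $\ccap_{s,p}(B_r(\bx))$ for the \emph{open} ball is its inner capacity, so one should note that sandwiching $\overline{B_{r/2}}\subset B_r\subset\overline{B_r}$ only perturbs the constants; (ii) the kernel asymptotics $G_s(\bz)\asymp|\bz|^{s-n}$ near the origin require $0<s<n$, which is guaranteed here by $s\le n/p<n$ and should be flagged as the hypothesis being used. Your closing observation that $\ccap_{n/p,p}(B_{1/2})>0$ is itself delivered by your own lower bound at $r=1/2$, so the treatment of $r$ near $1$ is self-contained.
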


\subsection{Sets of uniqueness and \texorpdfstring{$(s,p)$}{(s,p)}-nullity} 
\label{subsec:SOU}

We end this section by exploring the relationship between the concept of $(s,p)$-nullity and the concept of sets of uniqueness considered in \cite{AdHe,Maz'ya}. Following \cite[Definition 11.3.1]{AdHe} and \cite[p.~692]{Maz'ya}, given $1<p<\infty$ and $s>0$ we say that $E\subset \R^n$ is a $(s,p)$-set of uniqueness (abbreviated to $(s,p)$-SOU) if 
\[ \{u\in H^{s,p}: \ocap_{s,p}(\supp{u}\cap E^c) =0 \} = \{0\}. \]
Note that if $E$ is Borel then by Theorem \ref{thm:NullitycapEquiv}\rf{b2} this definition can be restated as
\[ \{u\in H^{s,p}: (\supp{u}\cap E^c) \textrm{ is } (-s,p')\textrm{-null} \} = \{0\}. \]
\begin{prop}
\label{prop:SOU}
Let $1<p<\infty$ and $s>0$. 
\begin{enumerate}[(i)]
\item \label{oo} If $E$ is a $(s,p)$-SOU then $E$ is $(s,p)$-null.
\item \label{pp} If $E$ is closed, then $E$ is a $(s,p)$-SOU if and only if $E$ is $(s,p)$-null.
\end{enumerate}
\end{prop}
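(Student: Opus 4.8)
The plan is to unpack the two definitions and exploit the inclusion $\supp u\cap E^c\subset E^c$ together with the monotonicity of capacity and of $(s,p)$-nullity.

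\textbf{Part \rf{oo}.} Suppose $E$ is a $(s,p)$-SOU. We want to show $E$ is $(s,p)$-null, i.e.\ that $H^{s,p}_F=\{0\}$ for every closed $F\subset E$. So let $F\subset E$ be closed and take $u\in H^{s,p}$ with $\supp u\subset F$. Then $\supp u\subset F\subset E$, so $\supp u\cap E^c=\emptyset$, and hence $\ocap_{s,p}(\supp u\cap E^c)=\ocap_{s,p}(\emptyset)=0$. By the defining property of a $(s,p)$-SOU this forces $u=0$. Since $F$ was an arbitrary closed subset of $E$, $E$ is $(s,p)$-null. (This direction is essentially trivial: being a SOU is a strictly stronger condition because it constrains distributions that are merely ``mostly'' supported in $E$, not just those supported inside $E$.)

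\textbf{Part \rf{pp}.} Now assume $E$ is closed. One implication is already given by \rf{oo}. For the converse, suppose $E$ is $(s,p)$-null; we must show $E$ is a $(s,p)$-SOU. Take $u\in H^{s,p}$ with $\ocap_{s,p}(\supp u\cap E^c)=0$. The set $A:=\supp u\cap E^c=\supp u\setminus E$ is the intersection of a closed set and an open set, hence Borel, so Theorem \ref{thm:NullitycapEquiv}\rf{b2} applies and tells us that $A$ is $(-s,p')$-null (here we use $s>0$, so that the roles match up with Theorem \ref{thm:NullitycapEquiv}, whose index range is $s\le 0$; concretely apply that theorem with the pair $(-s,p')$ in place of $(s,p)$, noting $-s<0$). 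The idea is now to write $\supp u$ as the disjoint union $A\cup(\supp u\cap E)$, where $\supp u\cap E$ is a closed subset of $E$ and hence $(s,p)$-null by hypothesis, while $A$ is $(-s,p')$-null $=$ $(s,p)$-null by Theorem \ref{thm:NullitycapEquiv}\rf{a2}/\rf{b2} applied in the form that these two capacity-vanishing conditions coincide. The key technical input is Proposition \ref{thm:NullityOfUnions}\rf{ll}: since $\supp u\cap E$ is closed it has no limit points in $A\setminus(\supp u\cap E)=A$, so the union $A\cup(\supp u\cap E)=\supp u$ is $(s,p)$-null. But $u\in H^{s,p}$ is supported in its own (closed) support, which we have just shown to be $(s,p)$-null, so $u=0$. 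Hence $E$ is a $(s,p)$-SOU.

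\textbf{Main obstacle.} The one subtlety to get right is the index bookkeeping when invoking Theorem \ref{thm:NullitycapEquiv}: that theorem is phrased for $s\le 0$ and relates $\ocap_{-s,p'}$-nullity to $(s,p)$-nullity, whereas here the relevant capacity is $\ocap_{s,p}$ with $s>0$. Unwinding this, $\ocap_{s,p}(A)=0$ is exactly the statement that $A$ is $(-s,p')$-null (apply Theorem \ref{thm:NullitycapEquiv}\rf{a2} with the pair $(-s,p')$, for which $-s<0$). Then to re-express $(-s,p')$-nullity of $A$ back in terms of a closed subset argument we only need the trivial fact (Remark \ref{rem:Compact}, Lemma \ref{lem:nullity1}\rf{aa}) that any closed subset of a $(-s,p')$-null set supports no nonzero $H^{-s,p'}$ distribution — but in fact we do not even need $A$ itself to be $(s,p)$-null in the original indices; the cleaner route is to observe directly that $\supp u$ decomposes as the union of the $(s,p)$-null closed set $\supp u\cap E$ and the $\ocap_{s,p}$-null (hence $(-s,p')$-null, hence capacity-theoretically negligible) Borel set $A$, and to apply the union result. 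I would double-check that Proposition \ref{thm:NullityOfUnions}\rf{ll} is stated for the right value of the regularity index (it is, for all $s\in\R$), so the decomposition argument goes through without needing the $s\le 0$ restriction of Proposition \ref{thm:NullityOfUnions}\rf{mm}.
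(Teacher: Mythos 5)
Part \rf{oo} of your argument is correct and is essentially the paper's proof verbatim. The overall architecture of your part \rf{pp} also matches the paper's: decompose $\supp u$ as $(\supp u\cap E)\cup A$ with $A:=\supp u\cap E^c$, observe that $\supp u\cap E$ is a closed $(s,p)$-null set, and invoke Proposition \ref{thm:NullityOfUnions}\rf{ll} to conclude that $\supp u$ is $(s,p)$-null, whence $u=0$.

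However, there is a genuine gap in how you handle the set $A$. Proposition \ref{thm:NullityOfUnions}\rf{ll} requires \emph{both} sets in the union to be $(s,p)$-null \emph{for the same pair} $(s,p)$; you cannot feed it one $(s,p)$-null set and one ``$(-s,p')$-null, hence capacity-theoretically negligible'' set, so your proposed ``cleaner route'' does not close the argument --- you really do need $A$ to be $(s,p)$-null in the original indices. Your justification for that is the assertion that ``$(-s,p')$-null $=$ $(s,p)$-null by Theorem \ref{thm:NullitycapEquiv}'', but this is a misreading: Theorem \ref{thm:NullitycapEquiv}, applied with the pair $(-s,p')$, gives only the single equivalence $\ocap_{s,p}(A)=0\Leftrightarrow A$ is $(-s,p')$-null; it says nothing about $(s,p)$-nullity (the theorem's hypothesis $s\le 0$ excludes the pair $(s,p)$ itself when $s>0$). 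The two notions are genuinely different conditions, on $H^{-s,p'}$ and on $H^{s,p}$ respectively, and even the embedding route via Lemma \ref{lem:nullity1}\rf{bb} only yields $(-s,p')$-null $\Rightarrow(s,p)$-null when $2s\ge\max\{n(1/p'-1/p),0\}$, which fails for $p>2$ and small $s$. The correct (and simple) repair, which is what the paper does, is to deduce from $\ocap_{s,p}(A)=0$ that $m(A)=0$ (e.g.\ via Theorem \ref{thm:CapDimH}\rf{a1}, which gives $\dimH(A)\le n-ps<n$), hence that $A$ is $(0,p)$-null by Lemma \ref{lem:nullity1}\rf{qq}, and hence $(s,p)$-null by Lemma \ref{lem:nullity1}\rf{bb} since $s>0$. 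With that substitution your proof of \rf{pp} goes through.
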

\begin{proof}
\rf{oo} Suppose that $u\in H^{s,p}$ with $\supp{u}\subset E$. Then $ \ocap_{s,p}(\supp{u}\cap E^c)= \ocap_{s,p}(\emptyset)=0$, and since $E$ is a $(s,p)$-SOU it follows that $u=0$. Hence $E$ is $(s,p)$-null.

\rf{pp} Suppose that $u\in H^{s,p}$ with $\ocap_{s,p}(\supp{u}\cap E^c) =0$. Then $m(\supp{u}\cap E^c) =0$ (this holds e.g.\ by Theorem \ref{thm:CapDimH}\rf{a1}, which gives $\dimH(\supp{u}\cap E^c)<n$), hence $\supp{u}\cap E^c$ is $(s,p)$-null. Since $E$ is closed and $(s,p)$-null, $\supp{u}\cap E$ is also closed and $(s,p)$-null. 
Then Proposition \ref{thm:NullityOfUnions}\rf{ll} gives that $\supp{u}$ is $(s,p)$-null, which implies that $u=0$. Hence $E$ is a $(s,p)$-SOU.
\end{proof}

\section{Examples and counterexamples} 
\label{sec:Domains}
In this section we present examples and counterexamples to illustrate the results of \S\ref{sec:MainResults}. 
\subsection{Boundary regularity and Hausdorff dimension} \label{sec:domainsHausdorff}

The following lemma concerns the relationship between the analytical regularity of the boundary of a set and its Hausdorff dimension. 
Its proof shows how to construct examples of $C^0$ open sets whose boundaries have a given Hausdorff dimension, using the modified Weierstrass-type functions analysed in \cite[Theorem 16.2]{Triebel97FracSpec}. These results should be considered in the context of Theorem \ref{thm:Domains} above. 

\begin{lem}
\label{lem:domains-dimH}
Let $\Omega\subset\R^n$ be an open set such that $\Omega^c$ has non-empty interior. Then:
\begin{enumerate}[(i)]
\item\label{app.a} $n-1\le\dimH(\partial\Omega)\le n$.
\item\label{app.aa} If $\Omega$ is $C^0$ then $m(\partial\Omega)=0$.
\item\label{app.b}  If $\Omega$ is $C^{0,\alpha}$ with $0<\alpha<1$, then $n-1\le\dimH(\partial\Omega)\le n-\alpha$.
\item\label{app.c}  If $\Omega$ is Lipschitz, then $\dimH(\partial\Omega)=n-1$.
\item\label{app.d}  For $n\ge 2$ and $0<\alpha<1$, there exists $\Omega_{\alpha,n}\subset\R^n$ open, bounded and $C^{0,\alpha}$ such that $\dimH(\partial\Omega_{\alpha,n})=n-\alpha$.
\item\label{app.e}  For $n\ge2$, there exists $\Omega_{0,n}\subset\R^n$ open, bounded and $C^0$ such that $\dimH(\partial\Omega_{0,n})=n$.
\end{enumerate}
\end{lem}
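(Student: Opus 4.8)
The plan is to dispatch the six parts in an order that lets later parts reuse earlier ones, with parts \rf{app.d} and \rf{app.e} — the explicit constructions — being the heart of the matter. For \rf{app.a}, the lower bound $\dimH(\partial\Omega)\ge n-1$ follows from the fact that $\partial\Omega$ separates the non-empty open sets $\Omega$ and $\inter(\Omega^c)$: any continuum (or more simply, the image of a line segment joining a point of $\Omega$ to a point of $\inter(\Omega^c)$) must meet $\partial\Omega$, and a standard projection/Fubini argument then shows $\mathcal H^{n-1}(\partial\Omega)>0$, whence $\dimH(\partial\Omega)\ge n-1$; the upper bound $\dimH(\partial\Omega)\le n$ is automatic since $\partial\Omega\subset\R^n$. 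For \rf{app.aa}, when $\Omega$ is $C^0$ the boundary is locally the graph of a continuous function $\R^{n-1}\to\R$, and the graph of a continuous function has $n$-dimensional Lebesgue measure zero by Fubini (each vertical slice is a single point); a countable cover by such graph pieces gives $m(\partial\Omega)=0$.

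For \rf{app.b}, the lower bound is \rf{app.a}; the upper bound $\dimH(\partial\Omega)\le n-\alpha$ uses that locally $\partial\Omega$ is the graph of a $C^{0,\alpha}$ function $f:\R^{n-1}\to\R$, and the graph of a $C^{0,\alpha}$ function on a bounded cube in $\R^{n-1}$ has Hausdorff dimension at most $n-\alpha$ (a classical box-counting estimate: dividing the cube into $N^{n-1}$ subcubes of side $1/N$, each graph piece has oscillation $\lesssim N^{-\alpha}$, so is covered by $\lesssim N^{1-\alpha}$ cubes of side $1/N$, giving $\lesssim N^{n-1}\cdot N^{1-\alpha}=N^{n-\alpha}$ such cubes, hence $\dimB\le n-\alpha$ and so $\dimH\le n-\alpha$); again patch countably many local pieces. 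Part \rf{app.c} for Lipschitz $\Omega$ follows the same way with $\alpha$ effectively $1$: a Lipschitz graph has $\dimH=n-1$, the lower bound being \rf{app.a}.

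The substantive work is in \rf{app.d} and \rf{app.e}, and here I would invoke the modified Weierstrass-type functions of \cite[Theorem 16.2]{Triebel97FracSpec}: for each $0<\alpha<1$ one has a function $W_\alpha\in C^{0,\alpha}(\R)$ (indeed with $\alpha$ the sharp H\"older exponent) whose graph has Hausdorff dimension exactly $2-\alpha$ in $\R^2$, and for $n\ge3$ one takes $f(x_1,\dots,x_{n-1}):=W_\alpha(x_1)$, a $C^{0,\alpha}$ function of $n-1$ variables whose graph has Hausdorff dimension $n-\alpha$ (the extra $n-2$ flat directions add $n-2$ to the dimension of the $\R^2$ graph). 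One then localises: multiply by a smooth cutoff or glue the graph piece into a bounded region so that $\Omega_{\alpha,n}$ is the bounded open set lying below a surface that agrees with this graph on a cube and is smooth elsewhere; the resulting $\Omega_{\alpha,n}$ is bounded, open, $C^{0,\alpha}$, and has $\dimH(\partial\Omega_{\alpha,n})=n-\alpha$ since the graph piece is contained in $\partial\Omega_{\alpha,n}$ (forcing $\dimH\ge n-\alpha$) while the whole boundary, being locally $C^{0,\alpha}$, has $\dimH\le n-\alpha$ by \rf{app.b}. For \rf{app.e} one needs a single continuous (but not H\"older of any positive order) function whose graph has full dimension $n$; this is again available from Triebel's construction (or by a lacunary series $\sum_k a_k\cos(b_k x)$ tuned so that the modulus of continuity decays slower than any power, forcing the graph dimension up to $n$), and one builds $\Omega_{0,n}$ from it exactly as before, with the upper bound $\dimH(\partial\Omega_{0,n})\le n$ trivial. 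The main obstacle is bookkeeping the localisation cleanly — ensuring the glued-in graph piece is genuinely part of the topological boundary and that the H\"older/continuity class of the full boundary is not degraded by the gluing — but this is routine once one fixes a cutoff; the genuinely non-trivial input, namely the existence of H\"older functions with graphs of prescribed fractal dimension, is imported wholesale from \cite[Theorems 16.2 and 16.3]{Triebel97FracSpec}.
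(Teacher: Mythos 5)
Parts \rf{app.a}--\rf{app.d} of your proposal are essentially sound and close to the paper's argument: the lower bound in \rf{app.a} via projecting $\partial\Omega$ onto a hyperplane separating a ball in $\Omega$ from a ball in $\inter(\Omega^c)$, the Fubini argument for \rf{app.aa}, and graph-dimension bounds for \rf{app.b}--\rf{app.c} all match (the paper cites \cite[Theorem 16.2(i)]{Triebel97FracSpec} where you give a self-contained box-counting estimate, and obtains \rf{app.c} by letting $\alpha\to1$; both are fine). For \rf{app.d} your cylinder construction $f(x_1,\dots,x_{n-1})=W_\alpha(x_1)$ works, but the claim that the flat directions ``add $n-2$'' to the dimension needs the product inequalities for Hausdorff dimension (lower bound $\dimH(A\times B)\ge\dimH A+\dimH B$ and an upper bound involving box dimension, cf.\ \cite[Chapter 7]{Fal}); the paper avoids this by invoking the $(n-1)$-variable statement of \cite[Theorem 16.2(ii)]{Triebel97FracSpec} directly. (The paper also treats $n=1$ separately at the outset, since the graph arguments are vacuous there.)

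The genuine gap is in \rf{app.e}. You assert the existence of a single continuous function on a cube in $\R^{n-1}$ whose graph has Hausdorff dimension exactly $n$, attributing it to Triebel's construction or to a lacunary series with slowly decaying modulus of continuity. Neither source delivers this: \cite[Theorem 16.2]{Triebel97FracSpec} produces $C^{0,\alpha}$ functions with graph dimension $n-\alpha<n$ for $\alpha>0$ only, and tuning the modulus of continuity of a lacunary series controls the \emph{box} dimension of the graph, not its Hausdorff dimension --- for Weierstrass-type functions the Hausdorff dimension of the graph is notoriously delicate and is not forced up to $n$ merely by making the function fail every H\"older condition. Continuous functions with graphs of full Hausdorff dimension do exist, but constructing one is a separate nontrivial matter. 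The paper sidesteps this entirely: it glues the functions $f_{1/j,n}$ of part \rf{app.d}, rescaled onto the strips $2^{-j}<x_{n-1}\le 2^{-j+1}$ and arranged to vanish at the strip edges so that the glued function is globally continuous; the resulting boundary contains, for each $j$, a piece of dimension $n-1/j$, so by countable stability of Hausdorff dimension it has dimension $\sup_j(n-1/j)=n$, with no single piece of full dimension required. You should replace your argument for \rf{app.e} by such a gluing, or supply a genuine reference for a continuous function whose graph has Hausdorff dimension $n$.
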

\begin{proof}
If $n=1$, there is no distinction between $C^0$, H\"older and Lipschitz open sets: they all mean a countable union of open intervals with pairwise disjoint closures. Hence $\partial \Omega$ contains at most countably many points, so it always has dimension $0=n-1$.
So we assume henceforth that $n\ge2$.

\rf{app.a} The upper bound  $\dimH(\partial\Omega)\le n$ is trivial. For the lower bound, since $\Omega$ is open and its complement $\Omega^c$ has non-empty interior we can take two disjoint balls of some radius $\epsilon>0$ such that $B_\epsilon(\bx)\subset\Omega$ and $B_\epsilon(\by)\subset\Omega^c$.
After translation and rotation, without loss of generality we can assume $\bx=\boldsymbol0$ and $\by=(y,0,\ldots,0)$.
For all $\widetilde\bz\in\R^{n-1}$ with $|\widetilde\bz|<\epsilon$, the point $(0,\widetilde\bz)$ lies in $\Omega$ and the point $(y,\widetilde\bz)$ lies in the interior of $\Omega^c$, so the segment $[(0,\widetilde\bz),(y,\widetilde\bz)]$ contains at least one point in $\deO$.
The orthogonal projection $P_1:\R^n\to\R^{n-1}$ defined by $\bx\mapsto (x_2,\ldots,x_n)$ is Lipschitz and $P_1(\deO)$ contains the set $\{\widetilde\bz\in\R^{n-1}: |\widetilde\bz|<\epsilon\}$ which has Hausdorff dimension equal to $n-1$.
Thus by \cite[Corollary~2.4]{Fal} 
we have $\dimH(\deO)\ge \dimH (P_1(\deO)) \ge n-1$, as required.
\rf{app.aa} The graph of a continuous function has zero Lebesgue measure (this follows from the translation invariance of Lebesgue measure---just consider the measure of the union of infinitely many disjoint vertical translates of the graph). Since the boundary of a $C^0$ open set can be covered by a countable number of $C^0$ graphs, the assertion follows from the countable subadditivity of Lebesgue measure. 
\rf{app.b} If $\partial\Omega$ is the graph of a $C^{0,\alpha}$ function, this follows from \cite[Theorem 16.2(i)]{Triebel97FracSpec}, which states that, for $0<\alpha<1$, the Hausdorff dimension of the graph of a function in $C^{0,\alpha}([-1,1]^{n-1})$ is at most $n-\alpha$.
The general case follows from the ``countable stability'' of the Hausdorff dimension (see \cite[p.~49]{Fal}): for countably many subsets $\{A_j\}_{j\in\N}$ of $\R^n$ we have $\dimH \bigcup_j A_j=\sup_j\dimH F_j$ (note that the open cover of $\deO$ given by the definition of a $C^{0,\alpha}$ open set in \cite[1.2.1.1]{Gri} allows a countable subcover, this can be seen using the compactness of $\deO\cap \overline{B_R(\mathbf{0})}$ for $R>0$ and considering a countable number of balls, e.g.\ $\{\overline{B_\ell(\mathbf{0})}\}_{\ell\in\N}$).
\rf{app.c} Since a Lipschitz open set is $C^{0,\alpha}$ for every $0<\alpha<1$, the Hausdorff dimension of its boundary satisfies $n-1\le\dimH(\partial\Omega)\le n-\alpha$ for all these values, from which the assertion follows.
\rf{app.d} This follows from \cite[Theorem 16.2(ii)]{Triebel97FracSpec} which provides a function $f_{\alpha,n}:[-1,1]^{n-1}\to[-1,1]$ of class $C^{0,\alpha}$ (defined via a modification of the well-known Weierstrass function),
 whose graph has Hausdorff dimension exactly equal to $n-\alpha$ (that $f_{\alpha,n}$ can be taken with values in $[-1,1]$ follows because affine transformations do not affect the Hausdorff dimension).
We can immediately define $\Omega_{\alpha,n}:=\{\bx\in(-1,1)^{n-1}\times\R,\; -2 <x_n<f_{\alpha,n}(x_1,\ldots,x_{n-1})\}$.
\rf{app.e} We construct $\Omega_{0,n}$ by gluing together the functions from the previous step.
We define $f_{0,n}:[-1,1]^{n-1}\to[-1,1]$ as
$f_{0,n}(x_1,\ldots,x_{n-1}):=f_{1/j,n}(x_1,\ldots,2^jx_{n-1}-1)$ if $2^{-j}<x_{n-1}\le 2^{-j+1}$ and $j\in\N$. To ensure global continuity we assume that each $f_{1/j,n}$ vanishes at $x_{n-1}=\pm 1$ (if necessary we can ensure this by multiplying $f_{1/j,n}$ by a suitable element of $\scrD((-1,1))$). We then construct the corresponding open set as before.
Its boundary has Hausdorff dimension not smaller than $n-1/j$ for all $j\in\N$, from which the assertion follows.
\end{proof}

\subsection{Swiss-cheese and Cantor sets}
\label{subsec:SwissCheese}

Our remaining examples belong to two classes of compact sets with empty interior: ``Swiss-cheese'' sets and Cantor sets. 
When $n=1$, Cantor sets are special cases of Swiss-cheese sets.

\begin{defn}\label{def:Gruyere}
A Swiss-cheese set is a non-empty compact set $K$ with empty interior, constructed as
$K=\overline{\Omega}\setminus (\bigcup_{i=1}^\infty B_{r_i}(\bx_i))$,
where $\Omega\subset\R^n$ is a bounded open set, 
and $\bx_i\in \R^n$ and $r_i>0$ for each $i\in \N$.
\end{defn}
Given $1<p<\infty$ and $s\in[-n/p',n/p]$, one would like to engineer the $(s,p)$-nullity (or otherwise) of $K$ by choosing an appropriate sequence of ball centres $\{\bx_i\}_{i=1}^\infty$ and a sequence of radii $\{r_i\}_{i=1}^\infty$ which tends to zero at an appropriate rate as $i\to\infty$. 

In one dimension ($n=1$), a well-studied family of examples of this construction is the classical ternary Cantor set and its generalisations.
These are Swiss-cheese sets for which for every $i\in\N$ the centre $\bx_i$ of the $i$th subtracted ball is chosen as the centre of one of the largest connected components of $\overline\Omega\setminus (\bigcup_{i'=1}^{i-1} B_{r_{i'}}(\bx_{i'}))$. 
Following \cite[\S5.3]{AdHe}, we consider Cantor sets in $\R^n$ (sometimes known as ``Cantor dust'' when $n\geq 2$) defined in the following general way. 

\begin{defn}\label{def:Cantor}
Let $\{ l_j\}_{j=0}^\infty$ be a decreasing sequence of positive numbers such that $0<l_{j+1}<l_j/2$ for $j\geq 0$. 
Let $E_0=[0,l_0]$ and for $j\geq0$ let $E_{j+1}$ be constructed from $E_j$ by removing an open interval of length $l_j-2l_{j+1}$ from the middle of each of the $2^j$ subintervals making up $E_j$, each of which has length $l_j$. 
The Cantor set associated with the sequence $\{ l_j\}_{j=1}^\infty$ is then 
defined to be $E:=\bigcap_{j=0}^\infty E_j$.
For $n\geq 1$ we define the $n$-dimensional Cantor set $E^{(n)}\subset\R^n$ to be the Cartesian product of $n$ copies of~$E$. 
\end{defn}
The total measure of $E_j$ is $2^jl_j$, so the measure of $E$ is $\lim_{j\to\infty}2^jl_j\in[0,l_0)$.
Since each $E_j$ is compact, $E$ is also compact. 
Moreover, $E$ has empty interior and is uncountable. 
Without loss of generality we shall always take $l_0=1$, so that $E\subset[0,1]$ and $E^{(n)}\subset [0,1]^n$.

\subsubsection{The case \texorpdfstring{$s<0$}{s negative} (sets with zero measure)}
\label{subsec:Examplesslessthanzero}

For the case $-n/p'\leq s<0$ the $(s,p)$-nullity of a Cantor set $E^{(n)}$ can be characterised precisely in terms of the asymptotic behaviour of the sequence $\{l_j\}_{j=1}^\infty$.
Theorem~5.3.2 in \cite{AdHe}, which was first proved in \cite[Theorem~7.4]{MazyaHavin72} (see also \cite[\S10.4.3, Proposition~5]{Maz'ya}),
provides necessary and sufficient conditions for $\ccap_{s,p}(E^{(n)})$ to vanish, for a given $0<s\leq n/p$. 
Reinterpreting this result in terms of $(s,p)$-nullity, using Theorem \ref{thm:NullitycapEquiv}\rf{b2}, provides the following result.

\begin{thm}\label{thm:CantorCapacity}
Let $1<p<\infty$ and $-n/p'\leq s<0$. The Cantor set $E^{(n)}$ is $(s,p)$-null if and only if
\begin{align*}
\sum_{j=0}^\infty \left(2^{-jn}l_j^{-(sp'+n)}\right)^{p-1} &= \infty, \qquad \textrm{for }-n/p'<s<0,
\\
\text{and if and only if}\qquad
\sum_{j=0}^\infty 2^{-jn(p-1)}\log{1/l_j} &= \infty, \qquad \textrm{for }s=-n/p'.
\end{align*}
\end{thm}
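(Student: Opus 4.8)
The plan is to obtain the theorem as a transcription of two results already available: the capacity characterisation of nullity, and the classical formula for the capacity of Cantor sets. First I would note that each $E_j$ is compact, hence so is $E=\bigcap_j E_j$, and therefore so is $E^{(n)}\subset[0,1]^n$; in particular $E^{(n)}$ is Borel, so (since $s<0$) Theorem~\ref{thm:NullitycapEquiv}\rf{b2} applies and reduces the claim to deciding when $\ccap_{-s,p'}(E^{(n)})=0$. Writing $t:=-s>0$ and $q:=p'$, the hypothesis $-n/p'\le s<0$ becomes $0<tq\le n$, with the endpoint $tq=n$ corresponding exactly to $s=-n/p'$; thus the two cases in the statement are precisely the cases $0<tq<n$ and $tq=n$ for the capacity $\ccap_{t,q}$.

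Next I would invoke \cite[Theorem~5.3.2]{AdHe} (first proved in \cite[Theorem~7.4]{MazyaHavin72}; see also \cite[\S10.4.3, Proposition~5]{Maz'ya}), which gives necessary and sufficient conditions, in terms of $\{l_j\}$, for $\ccap_{t,q}(E^{(n)})$ to vanish --- the separation condition $0<l_{j+1}<l_j/2$ built into Definition~\ref{def:Cantor} being exactly what that theorem requires of the sequence. Transcribed into the present notation the criterion reads, for $0<tq<n$,
\begin{align*}
\ccap_{t,q}(E^{(n)})=0 \quad\Longleftrightarrow\quad \sum_{j=0}^\infty \big(2^{-jn}\,l_j^{\,tq-n}\big)^{1/(q-1)}=\infty,
\end{align*}
and, in the borderline case $tq=n$,
\begin{align*}
\ccap_{t,q}(E^{(n)})=0 \quad\Longleftrightarrow\quad \sum_{j=0}^\infty 2^{-jn/(q-1)}\log(1/l_j)=\infty.
\end{align*}
(Heuristically $2^{-jn}l_j^{\,tq-n}$ is the $\ccap_{t,q}$-capacity of a single cube of side $l_j$ divided by the number $2^{jn}$ of such cubes at generation $j$, so these are Wiener-type series; compatibility with Proposition~\ref{prop:CapBall} is a quick check.)

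It then remains only to substitute back $t=-s$, $q=p'$ and rearrange the exponents. Here I would use the elementary identity $(p-1)(p'-1)=1$, equivalently $1/(q-1)=1/(p'-1)=p-1$, together with $tq-n=-sp'-n=-(sp'+n)$ and $n/(q-1)=n(p-1)$; the two displayed criteria then become
\begin{align*}
\sum_{j=0}^\infty\big(2^{-jn}l_j^{-(sp'+n)}\big)^{p-1}=\infty \qquad\text{and}\qquad \sum_{j=0}^\infty 2^{-jn(p-1)}\log(1/l_j)=\infty,
\end{align*}
respectively, which are exactly the conditions in the statement, and the split $0<tq<n$ versus $tq=n$ is exactly $-n/p'<s<0$ versus $s=-n/p'$. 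That completes the argument.

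Since both substantive facts are quoted, I expect no genuine obstacle: the only point requiring care is the bookkeeping of exponents under the substitution $(s,p)\mapsto(-s,p')$, and the verification that the hypotheses of \cite[Theorem~5.3.2]{AdHe} and of Theorem~\ref{thm:NullitycapEquiv}\rf{b2} are met (in particular that $E^{(n)}$ is Borel and $s<0$). As a sanity check one may observe that if $m(E^{(n)})>0$ then $2^jl_j\to c>0$, so $l_j$ is comparable to $2^{-j}$, each term of the relevant series decays geometrically, the series converges, and the theorem correctly predicts that $E^{(n)}$ is not $(s,p)$-null for any $s<0$ --- consistent with the general fact (via Theorem~\ref{thm:CapDimH}\rf{a1}, which prevents $\ccap_{t,q}$ from vanishing on a set of full Hausdorff dimension) that a set of positive Lebesgue measure is never $(s,p)$-null for $s\le0$.
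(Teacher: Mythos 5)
Your proposal is correct and is essentially identical to the paper's own argument: the paper likewise obtains Theorem~\ref{thm:CantorCapacity} by combining Theorem~\ref{thm:NullitycapEquiv}\rf{b2} (valid since $E^{(n)}$ is compact, hence Borel, and $s<0$) with the vanishing criterion for $\ccap_{t,q}(E^{(n)})$ from \cite[Theorem~5.3.2]{AdHe}, and then performing the substitution $(t,q)=(-s,p')$. Your exponent bookkeeping, using $1/(p'-1)=p-1$ and $tq-n=-(sp'+n)$, matches the stated conditions exactly.
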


Using Theorem~\ref{thm:CantorCapacity} we can construct a zoo of Cantor sets, all with zero measure, which realise all of the possible nullity sets permitted by Corollary \ref{cor:NullitySets}.

\begin{thm}\label{thm:CantorZoo}
Let $n\in\N$, $0\le\dd<n$ and $1<\pp<\infty$.
Then there exists two Cantor sets $E^{(n)}_{\dd ,\pp},F^{(n)}_{\dd ,\pp}\subset\R^n$ such that $\dimH E^{(n)}_{\dd ,\pp}= \dim_H F^{(n)}_{\dd ,\pp}=\dd $ and 
\begin{align*}
&E^{(n)}_{\dd ,\pp} \text{ is $(s,p)$-null if and only if either }\;
s>(\dd -n)/p', \quad \text{or}\quad s=(\dd -n)/p' \text{ and } p\le\pp,\\
&F^{(n)}_{\dd ,\pp} \text{ is $(s,p)$-null if and only if either }\;
s>(\dd -n)/p', \quad \text{or}\quad s=(\dd -n)/p' \text{ and } p< \pp.
\end{align*}
Furthermore, there exist $F^{(n)}_{\dd ,\infty},F^{(n)}_{\dd ,1},F^{(n)}_{n ,\infty}\subset\R^n$ such that $\dimH F^{(n)}_{\dd,\infty}= \dim_H F^{(n)}_{\dd,1}=\dd$, $\dim_H F^{(n)}_{n,\infty}=n$, and 
\begin{align*}
&F^{(n)}_{\dd,\infty} \text{ is $(s,p)$-null if and only if }\; s\ge(\dd -n)/p',&&0\le\dd\le n,\\
&F^{(n)}_{\dd,1} \text{ is $(s,p)$-null if and only if }\; s>(\dd -n)/p',&&0\le\dd<n.
\end{align*}
Moreover, for any Borel set $E\subset\R^n$ with $m(E)=0$, 
(exactly) one of the $n$-dimensional Cantor sets above is $(s,p)$-null precisely for the same values $s,p$ for which $E$ is $(s,p)$-null.
\end{thm}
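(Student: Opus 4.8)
The plan is to prove Theorem \ref{thm:CantorZoo} by explicitly choosing, for each prescribed asymptotic behaviour of the ratio sequence $\{l_j\}$, a Cantor set $E^{(n)}$ (equivalently, a Cantor ``dust'' built from $n$ copies of a one-dimensional Cantor set) whose dimension equals the target value $\dd$ and whose $(s,p)$-nullity is governed by Theorem \ref{thm:CantorCapacity}. The first step is to recall the standard dimension formula for these Cantor sets: for $E$ built from the sequence $\{l_j\}$ with $l_0=1$, one has $\dimH E^{(n)} = n\, \dimH E$ and $\dimH E = \liminf_{j\to\infty} \frac{j\log 2}{-\log l_j}$ (this follows from \cite[\S5.3]{AdHe} or may be quoted directly; the product formula for Hausdorff dimension of self-similar-type Cantor products is also in \cite{Fal}). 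Thus to get $\dimH E^{(n)} = \dd$ we need $l_j \approx 2^{-jn/\dd}$ up to subexponential corrections when $0<\dd<n$, and $l_j = 2^{-j}\cdot(\text{something decaying subexponentially, e.g.\ }j^{-a})$-type choices when $\dd = n$ (since then $l_j/l_{j+1}\to 2$).

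The second step is the heart of the matter: tuning the subexponential corrections so that the two series in Theorem \ref{thm:CantorCapacity} diverge or converge exactly at the prescribed threshold. For $0\le\dd<n$ and a target ``critical'' exponent $\pp$: along the line $s = (\dd-n)/p'$ one has $sp' + n = \dd$, so the first series becomes $\sum_j \big(2^{-jn} l_j^{-\dd}\big)^{p-1}$. Writing $l_j = 2^{-jn/\dd} c_j$ with $c_j$ subexponential (so the dimension is unaffected), this is $\sum_j \big(c_j^{-\dd}\big)^{p-1} = \sum_j c_j^{-\dd(p-1)}$. Choosing $c_j = j^{-1/(\dd(\pp-1))}$ makes the series $\sum_j j^{-(p-1)/(\pp-1)}$, which diverges for $p\le\pp$ and converges for $p>\pp$ — this gives $E^{(n)}_{\dd,\pp}$. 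Replacing $c_j$ by $j^{-1/(\dd(\pp-1))}(\log j)^{-\beta}$ with $\beta$ chosen so that at $p=\pp$ the series $\sum_j j^{-1}(\log j)^{-\beta\dd(\pp-1)}$ converges (take $\beta$ with $\beta\dd(\pp-1)>1$) shifts the behaviour to ``diverges iff $p<\pp$'', giving $F^{(n)}_{\dd,\pp}$. To get $F^{(n)}_{\dd,\infty}$ (null iff $s\ge(\dd-n)/p'$, i.e.\ the threshold is attained for \emph{all} $p$) we need the first series to diverge for every $p\in(1,\infty)$ at the critical line; choosing $c_j = (\log(j+2))^{-1/\dd}$, so the series is $\sum_j (\log j)^{-(p-1)}$, works for all $p$; for $\dd = 0$ one uses the $s = -n/p'$ criterion with $\log(1/l_j)$, handled analogously, and this also covers the remark about $F^{(n)}_{0,\infty}$ being an uncountable $(-n/p',p)$-null set. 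For $F^{(n)}_{\dd,1}$ (null iff $s>(\dd-n)/p'$, threshold never attained) we want the first series to converge for every $p$ at the critical line: choose $c_j = j^{-2/\dd}$ (or any $c_j$ with $\sum_j c_j^{\dd(p-1)/\dd}$... more carefully $\sum_j c_j^{-\dd(p-1)}$ convergent for all $p>1$ — but as $p\to 1^+$ this forces summability issues, so one instead takes $c_j$ decaying faster, e.g.\ $c_j = 2^{-j^{1/2}/\dd}$, which is still subexponential in $j$ hence preserves the dimension, and makes $c_j^{-\dd(p-1)} = 2^{(p-1)j^{1/2}}$ — that diverges, so one must take $c_j$ decaying \emph{slowly}: revisit: we want $c_j^{-\dd(p-1)}$ summable for all $p>1$, i.e.\ $c_j \to 0$ rapidly enough; $c_j = 2^{-\sqrt j}$ gives $c_j^{-\dd(p-1)} = 2^{\dd(p-1)\sqrt j}$ which blows up — wrong sign, we need $c_j^{-\dd(p-1)}$ to be \emph{small}, so $c_j$ should be \emph{bounded below}, not small; the correct move is $c_j = j^{+2/\dd}$, i.e.\ $l_j = 2^{-jn/\dd} j^{2/\dd}$, still $\dimH = \dd$ since $\log c_j = o(j)$, and the series is $\sum_j (j^2)^{-(p-1)} = \sum_j j^{-2(p-1)}$, which converges iff $p > 3/2$ — not all $p$, so instead take $c_j = 2^{j/\log j \cdot (1/\dd)}$-type, i.e.\ $l_j$ decaying slightly slower than $2^{-jn/\dd}$ but with $\log(1/l_j) = jn/\dd - o(j)$... this still gives $\dimH = \dd$ and can be tuned; I would state the needed lemma abstractly — ``for any prescribed convergence/divergence pattern of $\sum_j a_j^{p-1}$ in $p$, with $a_j$ prescribed by $l_j$, there is an admissible $\{l_j\}$'' — and defer the arithmetic).

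The third step is the final ``moreover'' assertion: given an arbitrary Borel $E\subset\R^n$ with $m(E)=0$, one of the listed Cantor sets has exactly the same nullity set. By Corollary \ref{cor:NullitySets}, $\mathcal N_E$ is determined by the two data $d=\dimH E\in[0,n]$ (via the defining line $s=(d-n)/p'$) and $\mathcal T_E$, which can only be $\emptyset$, $(1,\infty)$, $(1,\pp)$, or $(1,\pp]$. So there are exactly the countably-many-plus-continuum possibilities enumerated by the sets $E^{(n)}_{d,\pp}$, $F^{(n)}_{d,\pp}$, $F^{(n)}_{d,\infty}$, $F^{(n)}_{d,1}$ (with $d=n$ handled by $F^{(n)}_{n,\infty}$ and, trivially, any positive-dimension set is covered), and by the first two parts of the theorem each possibility is realised. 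Hence I would argue: enumerate the cases for $(\,\dimH E, \mathcal T_E\,)$ allowed by Corollary \ref{cor:NullitySets}; in each case point to the Cantor set constructed above whose $d$-value and $\mathcal T$-value match; conclude $\mathcal N_{E} = \mathcal N_{(\cdot)}$ by Corollary \ref{cor:NullitySets} since the nullity set is a function of that pair of invariants.

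The main obstacle I expect is the bookkeeping in the second step: one must simultaneously (a) keep $\log(1/l_j)/j$ converging to the right constant $n(\log 2)/\dd$ (or, for $\dd=n$, keep $l_j/l_{j+1}\to 2$) so that the Hausdorff dimension is exactly $\dd$, (b) maintain the admissibility constraint $0 < l_{j+1} < l_j/2$ from Definition \ref{def:Cantor}, and (c) make the relevant series in Theorem \ref{thm:CantorCapacity} diverge or converge at \emph{precisely} the intended value of $p$ and no other (including the delicate boundary behaviour as $p\to 1^+$ for the $F^{(n)}_{\dd,1}$ case, and uniformly in $p$ for the $F^{(n)}_{\dd,\infty}$ case). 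The cleanest route is to isolate, once and for all, a single combinatorial lemma: given $\dd\in(0,n)$ and a target function $j\mapsto a_j$ of the form $a_j = 2^{-jn}l_j^{-\dd}$ with prescribed polynomial/logarithmic growth, there is an admissible ratio sequence realising it with $\dimH = \dd$; then each of the five Cantor sets is an instance obtained by plugging in the appropriate $a_j$. This reduces the proof to verifying, case by case, a handful of elementary series comparisons, which are routine once the lemma is in place. I would present the lemma and the five instantiations, and relegate the series estimates to brief remarks.
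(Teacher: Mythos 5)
Your overall strategy is exactly the paper's: reduce everything to the divergence criteria of Theorem \ref{thm:CantorCapacity}, note that along the critical line $s=(\dd-n)/p'$ the first series collapses to $\sum_j c_j^{-\dd(p-1)}$ when $l_j=2^{-j n/\dd}c_j$ with $c_j$ subexponential, tune $c_j$ to prescribe the set of $p$ for which this diverges, and deduce the final ``moreover'' clause from Corollary \ref{cor:NullitySets}. Your choices for $E^{(n)}_{\dd,\pp}$ and $F^{(n)}_{\dd,\pp}$ (power and power-times-log corrections) are essentially the ones in the paper's table. But the proof of this theorem \emph{is} the explicit construction, and several of the required constructions are left unresolved in your write-up. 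Most visibly, for $F^{(n)}_{\dd,1}$ you cycle through several candidates, twice reverse yourself on whether $c_j$ should be large or small, and end by deferring to an unstated ``combinatorial lemma''. The resolution you are circling is that $c_j$ must tend to infinity subexponentially but faster than any polynomial: the paper takes $l_j=2^{-jn/\dd}\,2^{(n/\dd-1)\sqrt j/2}$, so that $c_j^{-\dd(p-1)}=2^{-(p-1)(n-\dd)\sqrt j/2}$ is summable for every $p>1$ while $\log(1/l_j)\sim (n/\dd)j\log 2$ keeps the dimension at $\dd$. Similarly, the $\dd=0$ cases cannot be handled by your ansatz $l_j=2^{-jn/\dd}c_j$ at all; there one must work with the second series $\sum_j 2^{-jn(p-1)}\log(1/l_j)$ and take $l_j$ decaying \emph{doubly} exponentially (e.g.\ $l_j=2^{-2^{2^j}}$ for $F^{(n)}_{0,\infty}$), which your ``handled analogously'' does not capture. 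You also never verify the admissibility constraint $0<l_{j+1}<l_j/2$, which for the increasing-$c_j$ and logarithmic corrections fails for small $j$ and is why the paper inserts index offsets $j_0$ into several entries of its table.

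Two smaller points. First, the product formula $\dimH E^{(n)}=n\,\dimH E$ that you quote is not valid for arbitrary Cartesian products (only $\ge$ holds in general), and it is unnecessary: once the nullity pattern of $E^{(n)}$ is established via Theorem \ref{thm:CantorCapacity}, the identity \rf{eq:DimHCharac} of Theorem \ref{thm:NullityHausdorff} forces $\dimH E^{(n)}=\dd$ (after checking $m(E^{(n)})=0$, which for $\dd<n$ follows from $2^jl_j\to0$ and for $F^{(n)}_{n,\infty}$ from the explicit computation $2^{nj}l_j^n\to0$). Second, your treatment of the final ``moreover'' clause via Corollary \ref{cor:NullitySets} is correct and matches the paper.
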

\begin{proof}
To construct each Cantor set, setting $l_0=1$ as usual, we just need to find a sequence $\{l_j\}_{j=1}^\infty$ with $0<l_1<1/2$ and $0<l_{j+1}<l_j/2$ for $j\ge1$, such that the series in Theorem~\ref{thm:CantorCapacity} diverge for the desired set of values of $s$ and $p$.
The 9 different examples we require are defined in Table \ref{tab:CantorProof}.

\begin{table}[ht!]
\caption{\label{tab:CantorProof} Cantor sets used in the proof of Theorem \ref{thm:CantorZoo}.}
\begin{center}
\begin{tabular}{|l|l|l|l|}\hline
For & and & choosing $l_0=1$ and, for $j\geq1$, & gives\\
\hline
\hline
$\dd=0$
& $1<\pp<\infty$ 
& $l_j=2^{-2(2^{jn(\pp-1)}-1)/(2^{n(\pp-1)}-1)}$
&  $E^{(n)}_{0,\pp}$
\\[1.5mm]
\hline
$ 0<\dd<n$
& $1<\pp<\infty$ 
& $l_j=2^{-jn/\dd} \big(1+\frac{(j-1)}{2}(2^{(n-\dd)(\pp-1)}-1)\big)^{\frac1{\dd(\pp-1)}}$,
&  $E^{(n)}_{\dd,\pp}$ 
\\
\hline\hline
$\dd=0$
& $\pp=1$ 
& $2^{-j^2-1}$
&  $F^{(n)}_{0,1}$
\\[1.5mm]
\hline
$\dd=0$
& $1<\pp<\infty$ 
&  $l_j=2^{-2^{(j+j_0)n(\pp-1)}/(j+j_0)^2}$
&  $F^{(n)}_{0,\pp}$
\\
&& $j_0=\min\{j\in \N$ s.t.\ $\frac{2^{(j+1)n(\pp-1)}}{(j+1)^2}-\frac{2^{jn(\pp-1)}}{j^2}
>1\}$
&\\
\hline
$\dd=0$
& $\pp=\infty$ 
& $l_j=2^{-2^{2^j}}$
&  $F^{(n)}_{0,\infty}$
\\[1.5mm]
\hline
$ 0<\dd<n$
& $\pp=1$ 
& $l_j=2^{-jn/\dd}\,2^{(n/\dd-1)\sqrt j/2}$ 
& $F^{(n)}_{\dd,1}$ 
\\[1.5mm]
\hline
$ 0<\dd<n$
& $1<\pp<\infty$ 
& $l_j
=2^{-jn/\dd}\left((j+j_0)\log^2(j+j_0)\right)^{\frac{1}{\dd(\pp-1)}}$
&  $F^{(n)}_{\dd,\pp}$ \\
&&
$j_0=\min\big\lbrace 2\le j\in \N$ s.t.\ 
$\frac{(j+1)\log^2(j+1)}{j\log^2j}<2^{(n-\dd)(\pp-1)} $
&\\
&&\qquad\qquad and 
$2^{-jn(\pp-1)}(j+1)\log^2(j+1)<2^{(n-\dd)(\pp-1)} \big\rbrace$
&\\
\hline
$ 0<\dd<n$
& $\pp=\infty$ 
& $l_j=2^{-jn/\dd}$
&  $F^{(n)}_{\dd,\infty}$
\\[1.5mm]
\hline
$ \dd=n$
& $\pp=\infty$ 
& $l_j=2^{-j}/(j+1)$
&  $F^{(n)}_{n,\infty}$
\\[1.5mm]
\hline
\end{tabular}
\end{center}
\end{table}

The convergence of the corresponding series can be verified using the fact that, for $x,y>0$,
the geometric series $\sum_{j\ge 0} x^j<\infty\Leftrightarrow x<1$;
its generalisation $\sum_{j\ge 0} x^jy^{\sqrt j}<\infty\Leftrightarrow x<1$ or $x=1$ and $y<0$;
the generalised harmonic series $\sum_{j\ge 1} j^{-x}<\infty\Leftrightarrow x>1$; 
and 
$\sum_{j\ge 2} j^{-x}\log^{-y} j<\infty\Leftrightarrow x>1$ or $x=1$ and $y>1$.
To prove that $F^{(n)}_{n,\infty}$ is $(0,p)$-null, we compute the measure $m(F^{(n)}_{n,\infty})=\lim_{j\to\infty}2^{nj}l_j^n=0$. 
The final statement in the assertion follows from Corollary \ref{cor:NullitySets}.
\end{proof}

A few remarks are in order here. First, the standard one-third Cantor set with $l_j=1/3^j$ corresponds to $F^{(1)}_{\log2/\log3, \infty}$ in the table in the proof of Theorem~\ref{thm:CantorZoo}.
Second, the set $F^{(n)}_{0,\infty}$ demonstrates that Corollary \ref{cor:Countable} is not sharp: there exist uncountable sets which are $(-n/p',p)$-null for all $1<p<\infty$. 
Third, Theorem~\ref{thm:dSet} implies that none of the Cantor sets constructed in Theorem~\ref{thm:CantorZoo} are $d$-sets, except for $F^{(n)}_{\dd,\infty}$, $0\le \dd\le n$.

\subsubsection{The case \texorpdfstring{$s>0$}{s positive} (sets with non-zero measure)}
\label{subsec:Examplessgreaterthanzero}

For a general Swiss-cheese set defined as in Definition~\ref{def:Gruyere} we can give sufficient conditions for non-$(s,p)$-nullity, for $0<s\leq n/p$, by combining Theorem \ref{thm:SOU} with capacity estimates from \S\ref{sec:Capacity}.
\begin{thm}
\label{thm:SwissCheeseNullity}
Let $K=\overline{\Omega}\setminus (\bigcup_{i=1}^\infty B_{r_i}(\bx_i))$ be a Swiss-cheese set defined as in Definition~\ref{def:Gruyere}. Suppose that $0<r_i\leq 1$ for each $i\in \N$, and let $0<r\leq 1$ be such that $B_r(\bx)\subset \Omega$ for some $\bx\in \Omega$. Given $1<p<\infty$ and $0<s\leq n/p$, let $0<A_{s,p,n}\leq B_{s,p,n}$, $c>1$ and $C_{c,p,n}>1$ denote the constants from Proposition \ref{prop:CapBall}. Then $K$ is not $(s,p)$-null provided that
\begin{align}
\label{eqn:sumcond1}
\sum_{i=1}^\infty r_i^{n-sp} &< \frac{A_{s,p,n}}{B_{s,p,n}}r^{n-sp},  &&\textrm{for } 0<s<n/p,
\\
\label{eqn:sumcond2}
\text{and}\quad
\sum_{i=1}^\infty\big(\log (c/r_i)\big)^{1-p} &< \frac{\big(\log (c/r)\big)^{1-p} }{C_{c,p,n}^2},&& \textrm{for } s=n/p.
\end{align}
\end{thm}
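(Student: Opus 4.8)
The plan is to use the capacity-theoretic characterisation of $(s,p)$-nullity given in Theorem \ref{thm:SOU}: since $K$ is closed with empty interior, it suffices to exhibit a single open ball $B_r(\bx)\subset\Omega$ for which $\ccap_{s,p}(B_r(\bx)\setminus K)<\ccap_{s,p}(B_r(\bx))$, because this contradicts condition \rf{c3} of Theorem \ref{thm:SOU} and hence shows $K$ is not $(s,p)$-null. First I would fix $\bx\in\Omega$ and $0<r\le1$ with $B_r(\bx)\subset\Omega$ as in the hypothesis. The key geometric observation is that $B_r(\bx)\setminus K\subset \bigcup_{i=1}^\infty B_{r_i}(\bx_i)$: indeed, every point of $\overline{\Omega}$ not in $K$ lies in one of the subtracted balls by the definition of a Swiss-cheese set (Definition \ref{def:Gruyere}), and $B_r(\bx)\subset\Omega\subset\overline{\Omega}$.

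Next I would estimate the two capacities. For the upper bound on $\ccap_{s,p}(B_r(\bx)\setminus K)$, I would pass to the outer capacity (the two agree on Borel sets for $s\ge0$ by Proposition \ref{thm:BorelCap}, or one simply uses monotonicity $\ucap\le\ocap$ together with the fact that open balls are capacitable), apply countable subadditivity (Proposition \ref{thm:CountableSubadd}) to the covering $B_r(\bx)\setminus K\subset\bigcup_i B_{r_i}(\bx_i)$, and then bound each $\ocap_{s,p}(B_{r_i}(\bx_i))=\ccap_{s,p}(B_{r_i}(\bx_i))$ from above using Proposition \ref{prop:CapBall}: by $B_{s,p,n}r_i^{n-sp}$ when $0<s<n/p$, and by $C_{c,p,n}\big(\log(c/r_i)\big)^{1-p}$ when $s=n/p$ (this is where we use $0<r_i\le1$). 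For the lower bound on $\ccap_{s,p}(B_r(\bx))$, I would again apply Proposition \ref{prop:CapBall}, obtaining $\ccap_{s,p}(B_r(\bx))\ge A_{s,p,n}r^{n-sp}$ when $0<s<n/p$, and $\ccap_{s,p}(B_r(\bx))\ge \frac{1}{C_{c,p,n}}\big(\log(c/r)\big)^{1-p}$ when $s=n/p$ (using $0<r\le1$).

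Combining these, in the case $0<s<n/p$ the hypothesis \rf{eqn:sumcond1} gives
\[
\ccap_{s,p}(B_r(\bx)\setminus K)\le B_{s,p,n}\sum_{i=1}^\infty r_i^{n-sp}< B_{s,p,n}\cdot\frac{A_{s,p,n}}{B_{s,p,n}}r^{n-sp}=A_{s,p,n}r^{n-sp}\le\ccap_{s,p}(B_r(\bx)),
\]
and in the case $s=n/p$ the hypothesis \rf{eqn:sumcond2} gives
\[
\ccap_{n/p,p}(B_r(\bx)\setminus K)\le C_{c,p,n}\sum_{i=1}^\infty\big(\log(c/r_i)\big)^{1-p}< C_{c,p,n}\cdot\frac{\big(\log(c/r)\big)^{1-p}}{C_{c,p,n}^2}=\frac{1}{C_{c,p,n}}\big(\log(c/r)\big)^{1-p}\le\ccap_{n/p,p}(B_r(\bx)).
\]
In both cases $\ccap_{s,p}(B_r(\bx)\setminus K)<\ccap_{s,p}(B_r(\bx))$, so condition \rf{c3} of Theorem \ref{thm:SOU} fails, and therefore $K$ is not $(s,p)$-null.

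I expect the main obstacle to be bookkeeping around capacitability rather than anything deep: one must be slightly careful that Proposition \ref{thm:CountableSubadd} is stated for the outer capacity $\ocap_{s,p}$, and that Proposition \ref{prop:CapBall} is stated for $\ccap_{s,p}$ of balls, so the chain of inequalities should be run consistently — e.g. via $\ccap_{s,p}(B_r(\bx)\setminus K)\le\ocap_{s,p}(B_r(\bx)\setminus K)\le\sum_i\ocap_{s,p}(B_{r_i}(\bx_i))=\sum_i\ccap_{s,p}(B_{r_i}(\bx_i))$, using that open balls are capacitable (so $\ocap=\ccap$ on them) and that $B_r(\bx)\setminus K$ is open. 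Everything else is an immediate substitution of the ball estimates into the two hypothesised series bounds.
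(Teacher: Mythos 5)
Your proposal is correct and follows essentially the same route as the paper's proof: bound $\ccap_{s,p}(B_r(\bx)\setminus K)$ from above via the inclusion $B_r(\bx)\setminus K\subset\bigcup_i B_{r_i}(\bx_i)$, countable subadditivity (Proposition \ref{thm:CountableSubadd}) and the upper ball estimates of Proposition \ref{prop:CapBall}, then compare with the lower ball estimate for $\ccap_{s,p}(B_r(\bx))$ to violate condition \rf{c3} of Theorem \ref{thm:SOU}. Your extra care about where $\ocap_{s,p}$ versus $\ccap_{s,p}$ is being used is a sound (and slightly more explicit) treatment of a point the paper glosses over.
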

\begin{proof}
By countable subadditivity (Proposition \ref{thm:CountableSubadd}), we have that 
\begin{align*}
\ccap_{s,p}(B_r(\bx)\setminus K) \leq \ccap_{s,p}\left(\bigcup_{i=1}^\infty B_{r_i}(\bx_i)\right) \leq \sum_{i=1}^\infty \ccap_{s,p}\big(B_{r_i}(\bx_i)\big).
\end{align*}
The statement of the theorem then follows from Theorem \ref{thm:SOU} (in particular the equivalence \rf{a3}$\Leftrightarrow$\rf{c3}) and the estimates in Proposition \ref{prop:CapBall}.
\end{proof}

As was alluded to in \S\ref{subsec:ResultsSG0}, given $n\in\N$, $1<p<\infty$ and $0<s\leq n/p$, Theorem \ref{thm:SwissCheeseNullity} allows us to construct non-empty compact Swiss-cheese sets $K\subset \R^n$ with empty interior, which are not $(s,p)$-null. To ensure that $K$ has empty interior, one can take the ball centres $\{\bx_i\}_{i=1}^\infty$ to be any countable dense subset of $\Omega$. To ensure that $K$ is non-empty, one just needs to make sure that $r_i\to 0$ sufficiently fast as $i\to\infty$ so that $\sum_{i=1}^\infty m\big(B_{r_i}(\bx_i)\big) < m(\overline{\Omega})$, from which it follows that $m(K)>0$.

As further concrete examples we consider two families of Cantor sets in $\R$, for which we prove non-$(s,p)$-nullity for certain values of $s>0$ in Proposition~\ref{prop:FatCantorCapacity}. 
Note that since Theorem~\ref{thm:SwissCheeseNullity} concerns Swiss-cheese sets, it applies to Cantor sets only for $n=1$.

\begin{example}[``Fat'' Cantor set]
\label{ex:FatCantor}
Given $0<\alpha<1/2$ and $0<\beta<1-2\alpha$, denote by $G_{\alpha,\beta}^{(n)}\subset \R^n$, $n\in\N$, the Cantor set constructed as in Definition~\ref{def:Cantor} with $l_{j+1}=1/2(l_j-\beta \alpha^j)$ for $j\geq 0$.
For $n=1$, the resulting set $G_{\alpha,\beta}$ is called a ``generalised Smith--Volterra--Cantor'' set in \cite[\S2.4-2.5]{DiMartinoUrbina}, 
where it is denoted $SVC(\alpha,\beta)$. The classical Smith--Volterra--Cantor set 
is obtained by the choice $\alpha=\beta=1/4$. (We remark also that the ``$\epsilon$-Cantor'' sets of \cite[p.~140]{AliBur} are obtained by the choice $\beta=(1-\epsilon)/2$, $\alpha=1/4$, for a given $0<\epsilon<1$.)
Then
\begin{align*}
l_j=\frac{1}{2^j}\left(1-\beta \left(\frac{1-(2\alpha)^j}{1-2\alpha} \right)\right),
\quad\text{so that}\quad
2^jl_j = 1-\beta \left(\frac{1-(2\alpha)^j}{1-2\alpha} \right) \to 1-\frac{\beta}{1-2\alpha}, \qquad j\to\infty,
\end{align*}
from which we conclude that $m(G_{\alpha,\beta}^{(n)})=(1-\beta/(1-2\alpha))^n>0$.
(In particular for the classical Smith--Volterra--Cantor set 
we have $m(G_{1/4,1/4})=1/2$.)
Hence $G_{\alpha,\beta}^{(n)}$ is not $(s,p)$-null for any $s\leq 0$ and any $1<p<\infty$;
on the other hand, since it has empty interior, $G_{\alpha,\beta}^{(n)}$ is $(s,p)$-null for $s>n/p$.
\end{example}

\begin{example}[``Super-fat'' Cantor set]
\label{ex:SuperFatCantor}
To construct a Cantor set which is even fatter (in the sense of nullity), choose
$l_j=2^{-j}(1-\gamma+\gamma^{\delta^j})$
with $0<\gamma<1$ and $\delta>1$. 
Then $l_{j+1}<l_j/2$ for all $j\ge0$, and when constructing level $j+1$ from level $j\in\N_0$ we subtract $2^j$ intervals with radii $\frac12 l_{j}-l_{j+1}=2^{-j-1}(\gamma^{\delta^j}-\gamma^{\delta^{j+1}})<2^{-j-1}\gamma^{\delta^j}$, which decrease faster as $j\to\infty$ than those in Example~\ref{ex:FatCantor}.
We denote the resulting Cantor set, which has Lebesgue measure $0<(1-\gamma)^n<1$, by $K_{\gamma,\delta}^{(n)}$.
\end{example}

\begin{prop}
\label{prop:FatCantorCapacity}
Given $1<p<\infty$ and $0<s<1/p$, there exist $0<\alpha<1/2$ and $0<\beta<1-2\alpha$ such that the fat Cantor set $G_{\alpha,\beta}\subset\R$ defined in Example~\ref{ex:FatCantor} is not $(s,p)$-null.

Given $1<p<\infty$, there are parameters $0<\gamma<1$ and $\delta>2^{\frac1{p-1}}$ such that the corresponding super-fat Cantor set $K_{\gamma,\delta}\subset\R$ of Example~\ref{ex:SuperFatCantor} is not $(1/p,p)$-null.
\end{prop}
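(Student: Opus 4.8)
The plan is to realise both Cantor sets as Swiss-cheese sets in the sense of Definition~\ref{def:Gruyere}---which is legitimate since $n=1$: take $\overline\Omega=[0,1]$, $\Omega=(0,1)$, and let the balls $B_{r_i}(\bx_i)$ be the open intervals deleted in the construction of Definition~\ref{def:Cantor}---and then apply Theorem~\ref{thm:SwissCheeseNullity}. The argument then reduces to estimating the series over the deleted intervals and arranging, by a suitable choice of parameters, that it falls below the fixed right-hand side of \eqref{eqn:sumcond1} (resp.\ \eqref{eqn:sumcond2}). Throughout I would fix $\bx=1/2$ and $r=1/4$, so that $B_r(\bx)\subset\Omega$, and (for the critical case) $c=2$; with these choices the right-hand sides of \eqref{eqn:sumcond1}--\eqref{eqn:sumcond2} are fixed positive constants depending only on $p$ and $s$ via the constants of Proposition~\ref{prop:CapBall}. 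Non-emptiness, compactness and empty interior of $G_{\alpha,\beta}$ and $K_{\gamma,\delta}$ are already recorded in Definition~\ref{def:Cantor} and Examples~\ref{ex:FatCantor}--\ref{ex:SuperFatCantor}, so the Swiss-cheese hypotheses hold.

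For the fat Cantor set $G_{\alpha,\beta}$, the construction deletes at the $j$-th stage exactly $2^j$ intervals, each of length $l_j-2l_{j+1}=\beta\alpha^j$ and hence of radius $\beta\alpha^j/2\le\beta/2<1$; thus
\[
\sum_{i=1}^\infty r_i^{1-sp}=\Big(\frac\beta2\Big)^{1-sp}\sum_{j=0}^\infty\big(2\alpha^{1-sp}\big)^j .
\]
Since $s<1/p$ we have $1-sp\in(0,1)$, so $2/(1-sp)>2$ and the choice $\alpha:=2^{-2/(1-sp)}\in(0,1/2)$ makes $2\alpha^{1-sp}=1/2$; the series then equals $2^{sp}\beta^{1-sp}$, which (as $1-sp>0$) tends to $0$ as $\beta\to0^+$. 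Hence one can pick $\beta\in(0,1-2\alpha)$ small enough that $2^{sp}\beta^{1-sp}<(A_{s,p,1}/B_{s,p,1})\,r^{1-sp}$, and Theorem~\ref{thm:SwissCheeseNullity} via \eqref{eqn:sumcond1} then shows $G_{\alpha,\beta}$ is not $(s,p)$-null.

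For the super-fat Cantor set $K_{\gamma,\delta}$ in the critical case $s=1/p=n/p$, the $j$-th stage deletes $2^j$ intervals of radius $r_j=2^{-j-1}(\gamma^{\delta^j}-\gamma^{\delta^{j+1}})$, with $0<r_j<2^{-j-1}\gamma^{\delta^j}\le 1/2$. Using $\delta>1$ and $0<\gamma<1$, the uniform lower bound $\log(c/r_j)>\delta^j\log(1/\gamma)>0$ holds, so since $1-p<0$,
\[
\sum_{i=1}^\infty\big(\log(c/r_i)\big)^{1-p}\le\big(\log(1/\gamma)\big)^{1-p}\sum_{j=0}^\infty\big(2\delta^{1-p}\big)^j=\frac{\big(\log(1/\gamma)\big)^{1-p}}{1-2\delta^{1-p}},
\]
the geometric series converging precisely because the hypothesis $\delta>2^{1/(p-1)}$ is equivalent to $2\delta^{1-p}<1$. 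Fixing any such $\delta$, the factor $(\log(1/\gamma))^{1-p}\to0$ as $\gamma\to0^+$ (again because $1-p<0$), so one can choose $0<\gamma<1$ small enough that the displayed sum lies below $(\log(c/r))^{1-p}/C_{c,p,1}^2$, and Theorem~\ref{thm:SwissCheeseNullity} via \eqref{eqn:sumcond2} then shows $K_{\gamma,\delta}$ is not $(1/p,p)$-null.

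The only genuinely delicate point is the critical case: one must keep careful track of the sign of the exponent $1-p<0$, so that the pointwise lower bound $\log(c/r_j)>\delta^j\log(1/\gamma)$ indeed yields an \emph{upper} bound on the sum, and recognise that the stated lower bound on $\delta$ is exactly the convergence threshold $2\delta^{1-p}<1$ for the resulting geometric series; the freedom to then shrink $\gamma$ does the rest. Everything else is routine bookkeeping of the Cantor construction combined with the ball-capacity estimates of Proposition~\ref{prop:CapBall}.
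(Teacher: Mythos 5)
Your proposal is correct and follows essentially the same route as the paper: both realise $G_{\alpha,\beta}$ and $K_{\gamma,\delta}$ as Swiss-cheese sets, apply Theorem~\ref{thm:SwissCheeseNullity}, reduce \eqref{eqn:sumcond1}--\eqref{eqn:sumcond2} to geometric series over the deleted intervals, and tune $(\alpha,\beta)$ resp.\ $(\gamma,\delta)$ so the series falls below the fixed right-hand side. The only differences are cosmetic choices (fixing $\alpha=2^{-2/(1-sp)}$ rather than allowing the whole range $\alpha<2^{-1/(1-sp)}$, and taking $r=1/4$, $c=2$ instead of $r=1/2$), which do not affect the argument.
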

\begin{proof}
To analyse the fat Cantor set using Theorem \ref{thm:SwissCheeseNullity} we note that in constructing $E_j$ from $E_{j-1}$ we remove $2^{j-1}$ intervals of radius $(\beta/2)\alpha^{j-1}$. The intervals we remove are all disjoint, so 
\begin{align*}
\sum_{i=1}^\infty r_i^{n-sp} = \sum_{j=0}^\infty 2^j \left(\frac{\beta \alpha^j}{2}\right)^{1-sp}=\left(\frac{\beta}{2}\right)^{1-sp}\sum_{j=0}^\infty(2\alpha^{1-sp})^j 
= \frac{\left(\beta/2\right)^{1-sp}}{1-2\alpha^{1-sp}},
\end{align*}
where for the sum to converge we need $2\alpha^{1-sp}<1$, i.e.\ 
\begin{align*}
0<\alpha<2^{-1/(1-sp)}, \quad \textrm{or, equivalently,} \quad 
s<s_{\alpha,p}:=\frac{1}{p}\left( 1+\frac{\log{2}}{\log{\alpha}} \right).
\end{align*}
Since $l_0=1$ we can choose $r=1/2$ in Theorem~\ref{thm:SwissCheeseNullity},
then \rf{eqn:sumcond1} will be satisfied (and hence $G_{\alpha,\beta}$ is not $(s,p)$-null) provided that $\beta$ lies in the range
\begin{align*}
 0<\beta< \left(\frac{A_{s,p,1}}{B_{s,p,1}}\right)^{1/(1-sp)}(1-2\alpha^{1-sp})^{1/(1-sp)}.
\end{align*} 

To analyse the super-fat Cantor set one proceeds in a similar way using \eqref{eqn:sumcond2} instead of \eqref{eqn:sumcond1}.
Possible parameters ensuring that $K_{\gamma,\delta}$ is not $(1/p,p)$-null are
$$ 
\delta> 2^{\frac1{p-1}} \text{ and }
0<\gamma<(2c)^{{\big(-C_{c,p,1}^{-2}(1-2\delta^{1-p})\big)}^{\frac1{1-p}}}<1.
$$
\end{proof}
By mimicking the approach of Theorem~\ref{thm:CantorZoo} (in particular the construction of the set $F^{(n)}_{0,\infty}$), one might try to construct an ``ultra-fat'' Cantor set that is not $(s,p)$-null for all $s\le 1/p$ and $1<p<\infty$.
We conjecture that such a set can be constructed by subtracting at every level $j\in\N_0$ an interval with radius proportional to $1/2^{2^{2^j}}$.
However, to use Theorem \ref{thm:SwissCheeseNullity} to prove non-nullity, one needs to know the dependence on $p$ of the constant $C_{c,p,1}$ from Proposition~\ref{prop:CapBall}, which corresponds to the constant $A$ in \cite[Proposition~5.1.3]{AdHe}.

The proof of Proposition \ref{prop:FatCantorCapacity} implies that if $0<\alpha<1/2$ is fixed, then the fat Cantor set defined in Example \ref{ex:FatCantor} is not $(s,p)$-null provided that 
$s< s_{\alpha,p}$ and that $\beta$ is  sufficiently small. 
But the permitted range of $\beta$ appears to depend on $s$ and $p$ in a nontrivial way. The following proposition provides a complementary result for the special case of $(s,2)$-nullity, which removes the restriction on $\beta$. 
Our proof, which is independent of the capacity-based approach of Theorem~\ref{thm:SwissCheeseNullity}, is inspired by the method used to prove similar results in \cite[Lemma 2.5 and Theorem 3.1]{La:11}, and involves estimating the local $L^2$ norm of the Fourier transform of the characteristic function of $E$ 
in terms of the $L^2$ norm of the difference between the characteristic function and a slightly shifted version of it.

\begin{prop}
\label{prop:FatCantor}
Let $E\subset\R$ be a Cantor set as in Definition \ref{def:Cantor}.
Let $\gap_j:=l_{j-1}-2l_j$, $j\geq 1$, denote the length of the $2^{j-1}$ gaps introduced in $E_{j-1}$ to construct $E_j$.
Assume that $\gap_j<l_j$ (equivalently $3l_j>l_{j-1}$) and that $\gap_j<\gap_{j-1}$ for all $j\ge1$. 
If 
\begin{align}\label{eq:GapSum}
\sum_{j\ge2} \frac{\gap_{j-1}^{2-2s}}{\gap_j^{2}}\Big(\sum_{k\ge j}2^{k}\gap_k\Big)<\infty
\end{align}
for $s>0$, then the characteristic function $\chi_E$ of $E$ belongs to $H^{s,2}$ (and hence $E$ is not $(s,2)$-null).

In particular, for the fat Cantor set $G_{\alpha,\beta}$ with $0<\alpha<1/2$ and $0<\beta<1-2\alpha$, $\chi_{G_{\alpha,\beta}}\in H^{s,2}$ for all $s<{s}_{\alpha,2}$, 
where
\[{s}_{\alpha,2}:=\frac12\Big(1+\frac{\log 2}{\log \alpha}\Big)\in(0,1/2).\]
\end{prop}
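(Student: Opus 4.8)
The plan is to estimate $\|\chi_E\|_{H^{s,2}}^2 = \int_{\R}(1+|\xi|^2)^s|\widehat{\chi_E}(\xi)|^2\,\rd\xi$ directly, splitting the frequency axis into dyadic-type blocks adapted to the gap scales $\ggap_j$. First I would recall the elementary Fourier identity that for any $f\in L^2(\R)$ and $h\in\R$, $\|f(\cdot+h)-f\|_{L^2}^2 = \int_{\R}4\sin^2(\xi h/2)|\hat f(\xi)|^2\,\rd\xi$, so that on the frequency range where $|\xi h|$ is of order one (say $\pi/(2h)\le|\xi|\le\pi/h$, where $\sin^2(\xi h/2)$ is bounded below by a positive constant) one gets the lower-bound-free upper estimate $\int_{\pi/(2h)\le|\xi|\le\pi/h}|\widehat{\chi_E}(\xi)|^2\,\rd\xi \le C\|\chi_E(\cdot+h)-\chi_E\|_{L^2}^2$. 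Choosing $h=h_j$ comparable to $\ggap_j$ for each $j$ tiles the region $|\xi|\gtrsim 1$ (using the hypothesis $\ggap_j<\ggap_{j-1}$ to ensure the blocks overlap and cover), while the bounded region $|\xi|\lesssim 1$ contributes a finite amount since $\chi_E\in L^2$. On the $j$-th block $(1+|\xi|^2)^s \le C\ggap_j^{-2s}$, so
\begin{align*}
\|\chi_E\|_{H^{s,2}}^2 \le C + C\sum_{j\ge 2}\ggap_j^{-2s}\,\|\chi_E(\cdot+h_j)-\chi_E\|_{L^2}^2,
\end{align*}
and the task reduces to bounding the shifted $L^2$-differences.

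The key geometric step is to estimate $\|\chi_E(\cdot+h)-\chi_E\|_{L^2}^2 = m\big((E+h)\ominus E\big)$ when $h$ is comparable to $\ggap_j$. Here I would use the self-similar-like structure of $E$: translating by roughly $\ggap_j$ moves each of the $2^{j-1}$ level-$j$ intervals of length $l_{j-1}$ — actually the $2^j$ intervals $E_j$ is built from, each of length $l_j$ — by an amount comparable to the gap just introduced, so the symmetric difference is controlled by (i) the portion of $E$ lying within distance $h$ of the $2^j$ endpoints of the level-$j$ construction intervals, plus (ii) the total measure of the part of $E$ sitting in the $2^k$ construction intervals of levels $k\ge j$ that can be "shifted off" themselves. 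Using $E\subset E_k$ and $m(E_k)=2^k l_k$ together with $\ggap_j<l_j$ (so the shift $h\sim\ggap_j$ is smaller than an interval), one obtains
\begin{align*}
m\big((E+h)\ominus E\big) \le C\,\ggap_{j-1}\!\!\sum_{k\ge j}2^k\ggap_k,
\end{align*}
roughly because each of the order-$2^{j-1}$ relevant interval boundaries at scale $j$ contributes, through the finer structure at levels $k\ge j$, a measure at most $\sum_{k\ge j}2^{k-j}\ggap_k$ per parent interval, and there are $2^{j-1}$ parents; the extra factor $\ggap_{j-1}$ versus $\ggap_j$ comes from comparing the available shift room to the gap size. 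Substituting this bound and $h_j\sim\ggap_j$ into the displayed series gives precisely
\begin{align*}
\|\chi_E\|_{H^{s,2}}^2 \le C + C\sum_{j\ge 2}\frac{\ggap_{j-1}^{2-2s}}{\ggap_j^{2}}\Big(\sum_{k\ge j}2^k\ggap_k\Big) < \infty
\end{align*}
under hypothesis \eqref{eq:GapSum}. That $\chi_E\ne 0$ with $\supp\chi_E\subset E$ then shows $E$ is not $(s,2)$-null.

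For the concluding statement about $G_{\alpha,\beta}$, I would substitute the explicit gap sizes: from Example~\ref{ex:FatCantor}, $\ggap_j = l_{j-1}-2l_j = \beta\alpha^{j-1}$, and $l_j = 2^{-j}(1-\beta(1-(2\alpha)^j)/(1-2\alpha))$, so $l_j$ is comparable to $2^{-j}$ (bounded above and below by positive constants times $2^{-j}$, using $0<\beta<1-2\alpha$). Then $\ggap_{j-1}^{2-2s}/\ggap_j^2 = \beta^{-2s}\alpha^{-2s-2+2s\cdot 0}\cdots$ — more cleanly, $\ggap_{j-1}^{2-2s}/\ggap_j^2$ is a constant times $\alpha^{(j-2)(2-2s)-2(j-1)} = \alpha^{j(-2s)}$ up to constants, i.e.\ grows like $\alpha^{-2sj}$, while $\sum_{k\ge j}2^k\ggap_k = \beta\sum_{k\ge j}2^k\alpha^{k-1}$ is a convergent geometric tail (since $2\alpha<1$) comparable to $(2\alpha)^j$. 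Hence the general term is, up to constants, $\alpha^{-2sj}(2\alpha)^j = (2\alpha^{1-2s})^j$, and the series converges iff $2\alpha^{1-2s}<1$, i.e.\ iff $1-2s > \log 2/(-\log\alpha)$, i.e.\ iff $s < s_{\alpha,2}=\tfrac12(1+\log 2/\log\alpha)$; one checks $s_{\alpha,2}\in(0,1/2)$ from $0<\alpha<1/2$. Also $\ggap_j<l_j$ and $\ggap_j<\ggap_{j-1}$ hold for $G_{\alpha,\beta}$ (the former is the condition $3l_j>l_{j-1}$, which follows from $2^jl_j\to 1-\beta/(1-2\alpha)>0$ for $j$ large, and can be arranged for small $j$ by the constraint on $\beta$, or absorbed by noting the tail of the series is what matters; the latter is immediate since $\alpha<1$). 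The main obstacle I anticipate is step two — getting the geometric bound $m\big((E+h)\ominus E\big)\le C\ggap_{j-1}\sum_{k\ge j}2^k\ggap_k$ with the correct powers; this requires carefully tracking which construction intervals at which levels actually overlap their translates, and the hypotheses $\ggap_j<l_j$ and $\ggap_j<\ggap_{j-1}$ are exactly what make this combinatorial bookkeeping clean (the first keeps the shift within a single interval, the second keeps the scales properly nested so the frequency blocks cover without gaps).
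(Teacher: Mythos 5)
Your overall strategy --- controlling $|\hat\chi_E|^2$ on frequency blocks adapted to the gap scales via the $L^2$ norm of $\chi_E-\chi_{\{E+h\}}$ --- is the same as the paper's, but two of the steps as you describe them do not work. First, the octave blocks $\pi/(2h_j)\le|\xi|\le\pi/h_j$ with $h_j\sim\gap_j$ do not cover a neighbourhood of infinity unless $\gap_{j-1}/\gap_j\le 2$; this is not assumed, and it already fails for the target example $G_{\alpha,\beta}$, where $\gap_{j-1}/\gap_j=1/\alpha>2$, so your decomposition leaves uncontrolled frequency ranges between consecutive blocks. The paper instead tiles with $I_j:=(\gap_{j-1}^{-1},\gap_j^{-1}]$, which partition $(\gap_1,\infty)$ by the monotonicity of $\gap_j$, and on $I_j$ one no longer has a uniform lower bound on $\sin^2(\gap_j\xi/2)$; instead one uses $1-\cos(\gap_j\xi)\ge(1-\cos 1)(\gap_j\xi)^2$ (valid since $\gap_j\xi\le1$ on $I_j$) to obtain
\begin{align*}
\sup_{\xi\in I_j}\frac{\xi^{2s}}{1-\cos(\gap_j\xi)}\le c_1\,\frac{\gap_{j-1}^{2-2s}}{\gap_j^{2}},
\end{align*}
and it is precisely this step that produces the factor $\gap_{j-1}^{2-2s}/\gap_j^{2}$ in \eqref{eq:GapSum}. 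Your per-block constant $\gap_j^{-2s}$ is a different quantity, so the final display you write down does not actually follow from the estimates preceding it.

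Second, your geometric bound $m\big((E+h)\ominus E\big)\le C\,\gap_{j-1}\sum_{k\ge j}2^k\gap_k$ cannot be right: the left-hand side is a length and the right-hand side a length squared, and it would not combine with your block constants to give \eqref{eq:GapSum} in any case. The clean route, which you gesture at but do not carry out, is the triangle inequality through $E_j$: the translate of $E_j$ by $\gap_j$ differs from $E_j$ in exactly $2^{j+1}$ intervals of length $\gap_j$ (this is where $\gap_j=\min_{1\le k\le j}\gap_k$ and $\gap_j<l_j$ are used), so $\|\chi_{E_j}-\chi_{\{E_j+\gap_j\}}\|_{L^2}^2=2^{j+1}\gap_j$, while $\|\chi_E-\chi_{E_j}\|_{L^2}^2=m(E_j)-m(E)=\sum_{k>j}2^{k-1}\gap_k$ and likewise for the translates; combining gives $\|\chi_E-\chi_{\{E+\gap_j\}}\|_{L^2}^2\le\sum_{k\ge j}2^{k+2}\gap_k$, with no extraneous factor of $\gap_{j-1}$. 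With these two corrections your computation for $G_{\alpha,\beta}$ (where $\gap_j=\beta\alpha^{j-1}$ and the general term of the series is comparable to $(2\alpha^{1-2s})^j$) goes through exactly as you state and recovers the threshold $s_{\alpha,2}$.
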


\begin{proof}
To prove that $\chi_E\in H^{s,2}$ for the claimed range of $s$ our strategy is to prove directly that $\|\chi_E\|_{H^{s,2}}^2=\int_{-\infty}^\infty(1+|\xi|^2)^s|\hat\chi_E(\xi)|^2 \rd \xi<\infty$ (cf.\ \rf{eqn:Hs2NormFourier}). Since the characteristic function $\chi_E$ is compactly supported, $\hat\chi_E$ is analytic, and so it suffices to control the behaviour of the integrand at infinity.

Since $\{\gap_j\}_{j=1}^\infty$ is a non-increasing sequence, the gaps between the subintervals in $E_j$ have minimal length $\min_{1\le k\le j}\gap_k=\gap_j$. 
For any such Cantor set $E$, 
the symmetric difference between $E_j$ and its translation $\{E_j+\gap_j\}$ is composed of $2^{j+1}$ (some open, some semi-open) intervals of length $\gap_j$ (in some cases touching each other at one extreme). 
So $\|\chi_{E_j}-\chi_{\{E_j+\gap_j\}}\|^2_{L^2(\R)} = 2^{j+1}\gap_j$.
Comparing $E$ and $E_j$ we have 
$\|\chi_{E_j}-\chi_E\|^2_{L^2(\R)} = |E_j|-|E|=\sum_{k>j}2^{k-1}\gap_k$ and similarly for the translates.
Using the triangle inequality we have
$$\|\chi_E-\chi_{\{E+\gap_j\}}\|_{L^2(\R)}^2
\le \sum_{k\ge j}2^{k+2}\gap_k.$$
The intervals
$I_j:=(\gap_{j-1}^{-1},\gap_j^{-1}]$, $j\geq 2$, 
are a partition of a neighbourhood of infinity; explicitly, $\bigcup_{j\ge2} I_j=(\gap_1,\infty)$ with empty mutual intersections. 
We define the coefficients
\begin{align*}
z_j:=\sup_{\xi\in I_j} \frac{\xi^{2s}}{1-\cos (\gap_j\xi)}
\le\sup_{\xi\in I_j}c_1 \frac{\xi^{2s}}{(\gap_j\xi)^2}
\le c_1\frac{\gap_{j-1}^{2-2s}}{\gap_j^2}, \qquad j\ge2,
\end{align*}
where $c_1:=1/(1-\cos1)\approx2.175$.
From Plancherel's Theorem and the above bounds we have
\begin{align*}
\int_{|\xi|>\gap_1}(1+|\xi|^2)^s|\hat\chi_E(\xi)|^2\rd\xi
&\le 2^s\sum_{j\ge2}\int_{|\xi|\in I_j}|\xi|^{2s}|\hat\chi_E(\xi)|^2\rd\xi\\
&\le 2^s\sum_{j\ge2} z_j 
\int_{|\xi|\in I_j} \big(1-\cos(\gap_j\xi)\big)|\hat\chi_E(\xi)|^2\rd\xi\\
&\le
2^{s-1}\sum_{j\ge2} z_j 
\int_\R |1-\re^{-\ri\xi\gap_j}|^2 \, |\hat\chi_E(\xi)|^2\rd\xi\\
&= 2^{s-1}\sum_{j\ge2} z_j \|\chi_E-\chi_{\{E+\gap_j\}}\|_{L^2(\R)}^2\\
&\le  2^{s+1}\sum_{j\ge2} z_j
\Big(\sum_{k\ge j}2^{k}\gap_k\Big)
\\
&\le  2^{s+1}c_1 \sum_{j\ge2} \frac{\gap_{j-1}^{2-2s}}{\gap_j^{2}}
\Big(\sum_{k\ge j}2^{k}\gap_k\Big).
\end{align*}
Thus, if condition \eqref{eq:GapSum} holds, $\|\chi_E\|_{H^{s,2}}^2=\int_{-\infty}^\infty(1+|\xi|^2)^s|\hat\chi_E(\xi)|^2 \rd \xi<\infty$ and the proof is complete.

In the fat Cantor case we have $\gap_j=\beta\alpha^{j-1}$, so $\{\gap_j\}_{j=1}^\infty$ is strictly decreasing and $\gap_j<l_j$ for each $j\geq 1$. 
Hence
\begin{align*}
\int_{|\xi|>\gap_1}(1+|\xi|^2)^s|\hat\chi_E(\xi)|^2\rd\xi  
&\le  \frac{2^{s+1}c_1 \beta^{1-2s}}{\alpha^{3-4s}}\sum_{j\ge2} \alpha^{-2sj}\Big(\sum_{k\ge j}(2\alpha)^{k}\Big) 
\le \frac{2^{s+1}c_1 \beta^{1-2s}}{\alpha^{3-4s}(1-2\alpha)}\sum_{j\ge2} (2\alpha^{1-2s})^j,
\end{align*}
which is bounded for $s<s_{\alpha,2}$.
\end{proof}

Proposition \ref{prop:FatCantor} demonstrates the non-$(s,2)$-nullity of the set $G_{\alpha,\beta}$, for $s< s_{\alpha,2}$, by showing that a specific function supported inside $G_{\alpha,\beta}$ (namely $\chi_{G_{\alpha,\beta}}$) belongs to $H^{s,2}$. We note that the proof relies on Plancherel's Theorem, and hence does not generalise easily to the case $p\neq 2$. However, we can immediately deduce some simple consequences for the case $p\neq 2$. 
Since $G_{\alpha,\beta}$ is bounded, $\chi_{G_{\alpha,\beta}}$ belongs to $L^p$ for all $1<p<\infty$ and, by interpolation~\eqref{eq:Interpolation}, it belongs to $\Hsp$ for $2\le p<\infty$ and $s< s_{\alpha,p}=2 s_{\alpha,2}/p$.
Hence the nullity threshold of $G_{\alpha,\beta}$ satisfies $s_{G_{\alpha,\beta}}(p)\geq {s}_{\alpha,p}$ for all $2\le p<\infty$, and all $0<\beta<1-2\alpha$. 
In the light of Proposition \ref{prop:FatCantorCapacity} we expect that this result also extends to $1<p<2$, but we do not have a proof of this.  (The only (
weaker) $\beta$-independent result we have for $1<p<2$ is that $s_{G_{\alpha,\beta}}(p)\geq {s}_{\alpha,2}$ for all $1<p<2$, which follows from the embedding \eqref{eq:EmbeddingCompact}.)
One might speculate that these inequalities are actually equalities, i.e.\ that $s_{G_{\alpha,\beta}}(p)= {s}_{\alpha,p}$ for all $n\in\N$ and $1< p<\infty$. But we do not know of any techniques for obtaining an upper bound on $s_{G_{\alpha,\beta}}(p)$ that would verify this.
As far as we are aware, calculating the exact value of $s_{G_{\alpha,\beta}}(p)$ is an \textbf{open problem}.

Regarding the higher-dimensional fat Cantor sets $G_{\alpha,\beta}^{(n)}$, $n\geq 2$, it follows from the above results, combined with Proposition~\ref{cor:TensorProduct}, 
that $s_{G_{\alpha,\beta}^{(n)}}(p)\geq {s}_{\alpha,p}$ for all $2\le p<\infty$, and $s_{G_{\alpha,\beta}^{(n)}}(p)\geq {s}_{\alpha,2}$ for all $1<p<2$. 
The characteristic function of the one-dimensional fat Cantor set $G_{\alpha,\beta}$ does not belong to $\Hsp(\R)$ for any $s>1/p$ by the Sobolev embedding $\Hsp(\R)\subset C^0(\R)$ (or equivalently by the $(s,p)$-nullity of $G_{\alpha,\beta}$). 
Hence Proposition~\ref{prop:TensorNegative} in the Appendix implies that for all $n\ge2$ the characteristic function $\chi_{G_{\alpha,\beta}^{(n)}}$ of the corresponding set $G_{\alpha,\beta}^{(n)}$ does not belong to $\Hsp(\R^n)$ either, for $s>1/p$, since it can be written as the tensor product $\chi_{G_{\alpha,\beta}^{(n)}}=\chi_{G_{\alpha,\beta}^{(1)}}\otimes\chi_{G_{\alpha,\beta}^{(n-1)}}$ due to the Cartesian-product structure of Cantor sets.
Thus we cannot hope to extend the proof of Proposition~\ref{prop:FatCantor} to show that 
$G_{\alpha,\beta}^{(n)}$ is not $(s,p)$-null for some $1/p<s\le n/p$.
Indeed, for any Cantor set $E^{(n)}\subset\R^n$, if $1/p<s\le n/p$ then non-zero functions in $\Hsp_{E^{(n)}}$ (if any exist) cannot be of tensor-product form.

\section{Conclusion}\label{sec:conclusion}
In Definition \ref{def:nullity} and Proposition \ref{prop:NullityThreshold} we introduced the concepts of ``$(s,p)$-nullity'' and the ``nullity threshold'' $s_E(p)$ of a subset $E\subset\R^n$, to describe the Sobolev regularity of the distributions supported by $E$. 
We now summarise our contributions to the questions \textbf{Q1}--\textbf{Q3} posed in \S\ref{subsec:nullity}. 
(An even more concise summary is given in tabular form in Table \ref{tab:Summary}).
We assume throughout this section that $E$ is Borel with empty interior. 
Our first observation is that the case $m(E)=0$ is significantly better understood than the case $m(E)>0$.

\paragraph{\textbf{Q1:} Given $1<p<\infty$ and $E\subset \R^n$ with empty interior, can we determine $s_E(p)$?}

If $m(E)=0$ then $s_E(p)$ is immediately computed in terms of Hausdorff dimension as $s_E(p)=(\dimH E-n)/p'$, see Theorem~\ref{thm:NullityHausdorff}.
If $m(E)>0$ then we know $0\le s_E(p)\le n/p$, but the only general result we know of that allows a more precise characterisation of $s_E(p)$ is Theorem~\ref{thm:SOU}, which appears useful only for proving lower bounds on $s_E(p)$. 
Using a result from \cite{Po:72a} we described two types of set with maximum nullity threshold $s_E(p)=n/p$ for all $1<p<\infty$, see Corollary~\ref{cor:Compact} and Theorem~\ref{thm:OpenMinusDense}.
We also derived lower bounds on $s_E(p)$ for some ``fat'' and ``super-fat'' Cantor sets and Swiss cheese sets in \S\ref{subsec:Examplessgreaterthanzero} (the lower bound obtained for the fat Cantor set is represented by the dotted line in Figure~\ref{fig:Parallelogram}). 
But we are unaware of any viable techniques for obtaining nontrivial upper bounds on $s_E(p)$ when $m(E)>0$. In fact, we have no evidence whatsoever of the existence of sets with $s_E(p)\in(0,n/p)$. 

\textbf{Open question:} Given $1<p<\infty$, do there exist sets $E$ for which $s_E(p)\in(0,n/p)$? 

\textbf{Open question:} What is the nullity threshold of the fat Cantor sets of Example \ref{ex:FatCantor}? 

\paragraph{\textbf{Q2:} For which functions $f:(1,\infty)\to[-n,n]$ does there exist $E\subset\R^n$ such that $f(p)=s_E(p)$ for all $p\in(1,\infty)$?}

To describe the possible nullity-threshold functions, it is convenient to make the change of variable $p\mapsto r=1/p$, and define, for $E\subset\R^n$ non-empty with empty interior,
$$S_E(r):=s_E(1/r)=\inf\big\{s\in\R, \text{ such that }E \text{ is } (s,1/r)\text{-null}\big\},\qquad 0<r<1.$$
The function $S_E$ satisfies the following conditions:
\begin{itemize}
\item $n(r-1)\le S_E(r)\le nr$ for $0<r<1$, so the graph of $S_E(r)$ in the $rs$-plane lies in the shaded parallelogram in Figure~\ref{fig:Parallelogram} (cf.\ Lemma \ref{lem:nullity1}\rf{dd}--\rf{jj}).
\item $S_E$ is non-decreasing and Lipschitz-continuous with $0\le S'_E(r)\le n$ for a.e.\ $0<r<1$ (cf.\ Lemma \ref{lem:nullity1}\rf{bb}).
\item The graph of $S_E$ in the $rs$-plane cannot cross the line $s=0$. Precisely, if $S_E(r^*)\ge0$ (or $S_E(r^*)<0$) for some $r^*\in(0,1)$ then $S_E(r)\ge0$  (or $S_E(r)<0$, respectively) for all $r\in(0,1)$ (cf.\ Lemma \ref{lem:nullity1}\rf{qq}).
\item If $m(E)=0$ then $S_E(r)=(n-\dim_H E)(r-1)$ (cf.\ Theorem~\ref{thm:NullityHausdorff}). 
So if the graph of $S_E$ lies in the lower half $rs$-plane in Figure~\ref{fig:Parallelogram}, it is necessarily a straight line through $(1,0)$ with slope equal to $n-\dimH E$.
\end{itemize}

Hence the only functions $F:(0,1)\to[-n,0]$ which can be realised as $F(r)=S_E(r)$ for some $E\subset\R^n$ are straight lines $F(r)=\lambda (r-1)$, $\lambda\in[0,n]$, 
which are realised by any set $E$ with $m(E)=0$ and $\dimH{E}=n-\lambda$ (for example the Cantor sets of Theorem~\ref{thm:CantorZoo}). By contrast, the only functions $F:(0,1)\to[0,n]$ that we can provably realise as $F(r)=S_E(r)$ for some $E\subset\R^n$ are the two straight lines $F(r)=0$ (already covered by the previous case), and the straight line $F(r)=nr$, with $E$ given either by the compact set of Corollary~\ref{cor:Compact} or the non-compact set of Theorem~\ref{thm:OpenMinusDense}.

\begin{figure}[htb!]  \centering
\begin{tikzpicture}[scale=1.5]
\fill [fill=gray!20](0,0) -- (0,-2) -- (1,0) -- (1,2) -- (0,0);
\draw [very thick](0,0) -- (0,-2) -- (1,0) -- (1,2) -- (0,0);
\draw [->](-0.5,0) -- (1.5,0); \draw (2,0) node {$r=1/p$};
\draw [->](0,-0.5) -- (0,2.2);   \draw (-0.15,2.2) node {$s$};
\draw [thick,dashed](1,0) -- (0,-1.2619);
\draw [thick, dotted](0,0) -- (0.5,0.4) -- (1,0.4);
\draw (1.3,0.4) node {$s_{\alpha,2}$};
\draw (1.2,2) node {$n$};
\draw (-0.3,-2) node {$-n$};
\draw (-0.4,-1.3) node {$d-n$};
\draw (-0.2,-0.2) node {$0$};
\draw (0.5,-0.2) node {$\frac12$};
\draw (1.1,-0.2) node {$1$};
\end{tikzpicture}
\caption{Schematic showing the region in the $rs$-plane in which the graph of $S_E(r)=s_E(1/r)$ must lie.
The dashed line in the lower triangle is the graph of $S_E$ for a set $E$ with Hausdorff dimension $0<d<n$, for instance a Cantor set from Theorem~\ref{thm:CantorZoo}. 
The dotted line in the upper triangle represents a lower bound $F(r)=\min\{2r s_{\alpha,2}, s_{\alpha,2}\}$ for $S_E$ for a fat Cantor set $E=G^{(n)}_{\alpha,\beta}$ from Example~\ref{ex:FatCantor} and Proposition~\ref{prop:FatCantor}.
}\label{fig:Parallelogram}\end{figure}
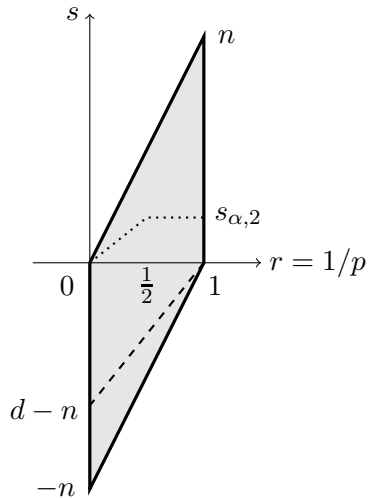

\paragraph{\textbf{Q3:} Under what conditions on $E$ and $p$ is $E$ ``threshold null'' (i.e.\ $(s_E(p),p)$-null)?}
For the case $m(E)=0$ we gave a complete classification of the possible threshold nullity behaviours in Corollary \ref{cor:NullitySets}, which we demonstrated was sharp by providing a zoo of examples in Theorem~\ref{thm:CantorZoo}.
The only other general results we know of concerning threshold nullity are the following:
\begin{itemize}
\item If $E$ is countable then it is $(s_E(p),p)$-null for $1<p<\infty$, with $s_E(p)=-n/p'$ (cf.\ Corollary~\ref{cor:Countable}).
\item If $E=A\setminus Q$, where $A$ has non-empty interior in which the countable set $Q$ is dense, then $E$ is not $(s_E(p),p)$-null for $1<p<\infty$, with $s_E(p)=n/p$ (cf.\ Theorem~\ref{thm:OpenMinusDense}).
\item If $E$ is a compact $d$-set or a $d$-dimensional hyperplane for $0<d<n$, then it is $(s_E(p),p)$-null for $1<p<\infty$, with $s_E(p)=(d-n)/p'$ (cf.\ Theorem \ref{thm:dSet}).
\item If $m(E)=0$ and $\dim_H E=n$, then $E$ is $(s_E(p),p)$-null for $1<p<\infty$, with $s_E(p)=0$.
\item If $s_E(p)=n/p$, as for the sets in Corollary~\ref{cor:Compact} and Theorem~\ref{thm:OpenMinusDense}, then if $E$ is $(s_E(p_0),p_0)$-null for some $1<p_0<\infty$ then $E$ is also $(s_E(p_1),p_1)$-null for every $1<p_1<p_0$.
\end{itemize}

\begin{table}[htb]
\caption{A summary of our results.
The first part of the table reviews the dependence of $s_E(p)$ on general properties of $E$, and the second part catalogues the examples introduced in \S\ref{subsec:SwissCheese} and \S\ref{subsec:ResultsSG0}.}
\begin{center}
\begin{tabular}{|l|l|l|l|}
\hline
Conditions on $E\subset\R^n$ ($E\ne\emptyset$, Borel)
& Nullity threshold $s_E(p)$ & Threshold-null?&\\
\hline
$E$ with non-empty interior & $s_E(p)=+\infty$&&\\
$E$ with empty interior & $-n/p'\le s_E(p)\le n/p$&&\\
Countable $E$  & $s_E(p)=-n/p'$& Yes&Cor.~\ref{cor:Countable}\\
$0\leq \dimH E<n$ & $s_E(p)=(\dimH E-n)/p'$ && Th.\ \ref{thm:NullityHausdorff}\\
$d$-set, Lipschitz $d$-dim.\ manifold &$s_E(p)=(d-n)/p'$ &Yes & Th.\ \ref{thm:dSet}\\
$E=\deO$, with $\Omega$ open, int$(\Omega^c)\ne\emptyset$ & $-1/p'\le s_E(p)\le 0$ && 
Th.\ \ref{thm:Domains}\\
$E=\deO$, with $\Omega$ open $C^{0,\alpha}$ & $-1/p'\le s_E(p)\le -\alpha/p'$ && 
Th.\ \ref{thm:Domains}\\
$E=\deO$, with $\Omega$ open Lipschitz & $s_E(p)= -1/p'$ &Yes &Th.\ \ref{thm:Domains}\\
$\dimH(E)=n$, $m(E)=0$ & $s_E(p)=0$ & Yes & 
Th.\ \ref{thm:NullityHausdorff}\\
$m(E)>0$&$s_E(p)\ge0$&&Lem.\ \ref{lem:nullity1}\\
$E=$ open $\setminus$ countable dense 
& $s_E(p)=n/p$ & No & Th.~\ref{thm:OpenMinusDense}\\
\hline\hline
Examples &&&\\
\hline
Cantor set $E^{(n)}_{d,\pp}$, $1<\pp<\infty$ &$s_E(p)=(d-n)/p'$ &Only if $p\le\pp$&Th.\ \ref{thm:CantorZoo}
\\
Cantor set $F^{(n)}_{d,\pp}$, $1\le\pp\le\infty$ &$s_E(p)=(d-n)/p'$ &Only if $p<\pp$&Th.\ \ref{thm:CantorZoo}
\\
Fat Cantor set $G_{\alpha,\beta}^{(n)}$ & $s_E(p)\ge \min\{s_{\alpha,p},s_{\alpha,2}\}$&&
Prop.\ \ref{prop:FatCantor}
\\
Super-fat Cantor set $K_{\gamma,\delta}^{(n)}$ & $s_E(p)\ge \min\{1/p,1/\pp\}$& No for $p\ge\pp$&Prop.\ \ref{prop:FatCantorCapacity}
\\
Polking set $K_{\pp}$ & $s_E(p)\ge\min\{n/p, n/\pp\}$ & No for $p\ge \pp$
&Th.\ \ref{thm:PolkingCheese}\\
Union of Polking sets & $s_E(p)=n/p$ & No & Cor.\ \ref{cor:Compact}\\
$\R^n\setminus\Q^n$ & $s_E(p)=n/p$ & No & Th.\ \ref{thm:OpenMinusDense}\\
\hline
\end{tabular}
\end{center}
\label{tab:Summary}
\end{table}

\section*{Acknowledgements}
The authors thank Simon Chandler-Wilde and Markus Hansen for helpful discussions.

\appendix
\section{Tensor products and traces}
\label{s:TensorProduct}

In this section we collect some results on the Sobolev regularity of tensor-product distributions.
Propositions~\ref{prop:TensorPositive} and \ref{prop:TensorNegative} give lower and a upper bounds, respectively, for the regularity of a tensor-product distribution in terms of the regularity of the factors.
They are simple consequences of the results proved in \cite{HansenPhD} in the more general setting of Triebel--Lizorkin spaces and the so-called spaces of dominating mixed smoothness.
We improve them slightly in the Hilbert space case $p=2$ by exploiting Plancherel's theorem.
The consequences of these results for $(s,p)$-nullity are summarised in Proposition~\ref{cor:TensorProduct} and in the subsequent paragraph.
In studying $(s,p)$-nullity of Cartesian-product sets, we also make use of classical trace operators and standard bounds on the Hausdorff dimensions of products: see Proposition~\ref{prop:Trace}. 
Note that we do not consider ``mixed'' tensor products of Sobolev distributions with different integrability parameter $p$, so we fix $1<p<\infty$ for the whole section.

Given $n_1,n_2\in\N$ and two distributions $u_1\in \scrD^*(\R^{n_1})$, $u_2\in \scrD^*(\R^{n_2})$, we define the tensor product $u_1\otimes u_2\in\scrD^*(\R^{n_1+n_2})$ as in 
\cite[Chapter V]{HormanderI90} or \cite[Proposition~1.3.1]{HansenPhD}.
This definition immediately extends to tensor products of finitely many distributions $u_1\otimes\cdots\otimes u_N\in\cD^*(\R^{n_1+\cdots n_N})$.

\begin{prop}\label{prop:TensorPositive}
Let $N\in\N$, $s_j\in\R$,  $n_j\in\N$, 
and $u_j\in H^{s_j,p}(\R^{n_j})$ for $j=1,\ldots,N$.
Then
\begin{align}\label{eq:MarkusPositive}
u_1\otimes\cdots\otimes u_N\in \Hsp(\R^{n_1+\cdots+n_N}),
\quad\text{for}\; s<s_*,
\end{align}
where
\begin{align*}
s_*:=\max\big\{0,\min_{j=1,\ldots,N}s_j\big\}+\sum_{j=1}^N \min\{0,s_j\}
=\min_{\emptyset\ne \mathfrak J\subset\{1,\ldots,N\}}\sum_{j\in \mathfrak J} s_j
=\begin{cases}
\displaystyle
\min_{j=1,\ldots,N} s_j &\text{if }s_j\ge 0 \; \forall j,\\[3mm]
\displaystyle
\sum_{j \text{ s.t.\ } s_j<0} s_j &\text{otherwise.}
\end{cases}
\nonumber
\end{align*}
If either $p=2$ or $s_j\in\N_0$ for all $1\le j\le N$, then \eqref{eq:MarkusPositive} holds also for $s=s_*$.
\end{prop}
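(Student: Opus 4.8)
The plan is to treat the endpoint $s=s_*$ directly, by two separate arguments according to which of the two hypotheses holds (the case $s<s_*$ being the one inherited from \cite{HansenPhD}). Both endpoint arguments rest on the elementary pointwise inequality
\[
\Big(1+\sum_{j=1}^N|\bxi_j|^2\Big)^{s_*}\;\le\;C\prod_{j=1}^N\big(1+|\bxi_j|^2\big)^{s_j},\qquad \bxi_j\in\R^{n_j},
\]
with $C$ depending only on $N$ and $s_1,\dots,s_N$. First I would prove this inequality by splitting into the two cases appearing in the formula for $s_*$. If $s_j\ge0$ for all $j$, then $s_*=\min_j s_j=:m\ge 0$, and the bound follows from $1+\sum_j|\bxi_j|^2\le (N+1)\max_j(1+|\bxi_j|^2)$ on noting that the index $k$ attaining the maximum satisfies $(1+|\bxi_k|^2)^{m}\le(1+|\bxi_k|^2)^{s_k}$ while every other factor $(1+|\bxi_j|^2)^{s_j}$ is $\ge1$; this gives $C=(N+1)^m$. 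If instead $s_j<0$ for some $j$, then $s_*=\sum_{j\,:\,s_j<0}s_j<0$; discarding from the right-hand side the factors with $s_j\ge0$ (each $\ge1$), it remains to check $\prod_{j\,:\,s_j<0}(1+|\bxi_j|^2)^{|s_j|}\le\big(1+\sum_k|\bxi_k|^2\big)^{|s_*|}$, which is immediate since $1+|\bxi_j|^2\le1+\sum_k|\bxi_k|^2$ for each $j$ and $\sum_{j\,:\,s_j<0}|s_j|=|s_*|$; so here $C=1$.

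For the case $p=2$ I would then combine the Fourier-side expression \eqref{eqn:Hs2NormFourier} for the $H^{s,2}$-norm with the standard facts that the tensor product of tempered distributions is tempered and $\widehat{u_1\otimes\cdots\otimes u_N}=\hat u_1\otimes\cdots\otimes\hat u_N$ (cf.\ \cite[Chapter V]{HormanderI90}). Applying Tonelli's theorem to the non-negative integrand and then the pointwise inequality,
\[
\|u_1\otimes\cdots\otimes u_N\|_{H^{s_*,2}}^2=\int\Big(1+\sum_j|\bxi_j|^2\Big)^{s_*}\prod_j|\hat u_j(\bxi_j)|^2\,\rd\bxi\;\le\;C\prod_j\|u_j\|_{H^{s_j,2}}^2<\infty .
\]
For the case $s_j\in\N_0$ for all $j$, note $s_*=\min_j s_j=:m\in\N_0$, and use the identification $H^{k,p}=W^{k,p}$ with equivalent norms for $k\in\N_0$ recalled in \S\ref{sec:MainResults}. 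Each $u_j$ is then (represented by) an $L^p(\R^{n_j})$ function, so $u_1\otimes\cdots\otimes u_N\in L^p$ by Fubini; and for any multi-index $\alpha=(\alpha_1,\dots,\alpha_N)$ with $|\alpha|\le m$ one has $\partial^\alpha(u_1\otimes\cdots\otimes u_N)=(\partial^{\alpha_1}u_1)\otimes\cdots\otimes(\partial^{\alpha_N}u_N)$ as distributions, with $|\alpha_j|\le m\le s_j$, so each $\partial^{\alpha_j}u_j\in L^p(\R^{n_j})$ and hence the product is in $L^p$ by Fubini. Thus $u_1\otimes\cdots\otimes u_N\in W^{m,p}=H^{m,p}$.

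I do not anticipate a genuine obstacle: once the pointwise inequality is in hand both endpoint arguments are short, and the auxiliary facts on Fourier transforms and on distributional derivatives of tensor products are entirely standard. The only points needing a little care are the case bookkeeping in the proof of the pointwise inequality, and the observation that for $p=2$ the integrand is non-negative, so that no integrability assumption is needed to factor the integral as a product. It is perhaps worth remarking explicitly why the endpoint is claimed only under these two hypotheses: for $p\neq2$ and some $s_j\notin\N_0$ there is neither a Plancherel identity nor a weak-derivative characterisation of $H^{s_j,p}(\R^{n_j})$ to exploit, and whether $s=s_*$ is attained in that generality remains open.
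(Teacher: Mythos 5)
Your proposal is correct and follows essentially the same route as the paper: the $s<s_*$ case is delegated to \cite{HansenPhD}, the $p=2$ endpoint is handled by the same Fourier-side pointwise bound $(1+\sum_j|\bxi_j|^2)^{s_*}\le C\prod_j(1+|\bxi_j|^2)^{s_j}$ combined with Plancherel and Tonelli (the paper reduces to $N=2$ and iterates, while you treat general $N$ at once with a harmless constant $C$), and the $s_j\in\N_0$ endpoint rests on the $W^{k,p}$ characterisation via weak derivatives of tensor products, which you spell out where the paper cites \cite[Proposition~2.3.8(i)]{HansenPhD}.
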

\begin{proof}
Assertion \eqref{eq:MarkusPositive} is proved in \cite[Propositions~2.3.8(ii) and 4.4.1]{HansenPhD}.
The stronger result when $s_j\in\N_0$ for all $1\leq j\leq N$ follows from the equivalence in this case between the norm in $H^{s_j,p}(\R^{n_j})$ and norm in $W^{s_j,p}(\R^{n_j})$ (involving $L^p$ norms of weak derivatives), see  \cite[Proposition~2.3.8(i)]{HansenPhD}.

To prove the stronger result for $p=2$ it suffices to consider the case with $N=2$ components.
In this case, $s_*=\min\{s_1,s_2,s_1+s_2\}$ and the squared norm $\|u_1\otimes u_2\|_{H^{s_*,2}(\R^{n_1+n_2})}^2$ can be controlled using the representation \eqref{eqn:Hs2NormFourier}, the relation $\widehat{u_1\otimes u_2}=\hat u_1\otimes\hat u_2$, Fubini's theorem, and by bounding the function
$(1+|\bxi_1|^2+|\bxi_2|^2)^{s_*}$ for all $\bxi_1\in\R^{n_1},\bxi_2\in\R^{n_2}$, using the inequalities
\begin{align*}
(1+|\bxi_1|^2+|\bxi_2|^2)^{s_1}
\le(1+|\bxi_1|^2)^{s_1}(1+|\bxi_2|^2)^{s_1}
&\le(1+|\bxi_1|^2)^{s_1}(1+|\bxi_2|^2)^{s_2}
\qquad\text{for } 0\le s_1\le s_2,\\ 
(1+|\bxi_1|^2+|\bxi_2|^2)^{s_1}
\le(1+|\bxi_1|^2)^{s_1}
&\le(1+|\bxi_1|^2)^{s_1}(1+|\bxi_2|^2)^{s_2}
\qquad\text{for }s_1<0\le s_2, \\ 
(1+|\bxi_1|^2+|\bxi_2|^2)^{s_1+s_2}
&=(1+|\bxi_1|^2+|\bxi_2|^2)^{s_1}(1+|\bxi_1|^2+|\bxi_2|^2)^{s_2}\\
&\le(1+|\bxi_1|^2)^{s_1}(1+|\bxi_2|^2)^{s_2} 
\qquad\text{for }s_1,s_2<0.
\end{align*}
This gives $\|u_1\otimes u_2\|_{H^{s_*,2}(\R^{n_1+n_1})} \le \|u_1\|_{H^{s_1,2}(\R^{n_1})}\, \|u_2\|_{H^{s_2,2}(\R^{n_2})}$,  from which the assertion follows.
\end{proof}

\begin{prop}\label{prop:TensorNegative}
Let $N\in\N$, $s\in\R$, 
$n_j\in\N$, and $0\ne u_j\in \cS^*(\R^{n_j})$ for $j=1,\ldots,N$.
If $u_1\notin \Hsp(\R^{n_1})$, then  $u_1\otimes \cdots\otimes u_N\notin H^{t,p}(\R^{n_1+\cdots+n_N})$ for any $t>s$.
If either $p=2$ or $s\in\N_0$, then  $u_1\otimes \cdots\otimes u_N\notin \Hsp(\R^{n_1+\cdots+n_N})$.
\end{prop}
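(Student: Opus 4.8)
The plan is to argue by contraposition, reducing everything to the problem of recovering the first factor $u_1$ from the tensor product. First I would group the last $N-1$ factors: put $m:=n_2+\cdots+n_N$ and $w:=u_2\otimes\cdots\otimes u_N\in\cS^*(\R^m)$, which is non-zero (a tensor product of non-zero distributions is non-zero, as one sees by pairing against a tensor product of test functions), and note that $u_1\otimes\cdots\otimes u_N=u_1\otimes w$ by associativity of the tensor product. Hence it suffices to prove: \emph{if $0\ne w\in\cS^*(\R^m)$ and $u_1\otimes w\in H^{\sigma,p}(\R^{n_1+m})$, then $u_1\in H^{\sigma',p}(\R^{n_1})$ for every $\sigma'<\sigma$, and moreover $u_1\in H^{\sigma,p}(\R^{n_1})$ whenever $p=2$ or $\sigma\in\N_0$.} Applying this with $\sigma=t$, $\sigma'=s$ (possible since $s<t$) gives the first assertion, and applying it with $\sigma=s$ gives the second.

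\textbf{The case $p=2$.} Here the implication can be read off on the Fourier side. If $u_1\otimes w\in H^{\sigma,2}(\R^{n_1+m})$, then by \eqref{eqn:Hs2NormFourier} and $\widehat{u_1\otimes w}=\hat u_1\otimes\hat w$ the distribution $\hat u_1\otimes\hat w$ is locally an $L^2$ function (so in particular $\hat u_1$ and $\hat w$ are functions, not merely distributions) and $\int_{\R^{n_1}}\int_{\R^m}(1+|\bxi_1|^2+|\bxi_2|^2)^\sigma|\hat u_1(\bxi_1)|^2|\hat w(\bxi_2)|^2\,\rd\bxi_2\,\rd\bxi_1<\infty$. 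Since $w\ne0$ there exist $\eps>0$, $R>0$ and a set $A\subset B_R(\mathbf{0})\subset\R^m$ of positive measure on which $|\hat w|\ge\eps$; restricting the integral to $\R^{n_1}\times A$ and using $(1+|\bxi_1|^2+|\bxi_2|^2)^\sigma\ge c_{\sigma,R}(1+|\bxi_1|^2)^\sigma$ for $\bxi_2\in B_R(\mathbf{0})$, with $c_{\sigma,R}=\min\{1,(1+R^2)^\sigma\}>0$, shows $\|u_1\|_{H^{\sigma,2}}<\infty$.

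\textbf{General $p$.} I would fix $\phi\in\scrD(\R^m)$ with $w(\phi)\ne0$ (such $\phi$ exists because $w\ne0$ and $\scrD$ is dense in the Schwartz space) and introduce the partial-contraction operator on $\cS^*(\R^{n_1+m})$ defined by $(T_\phi v)(\psi):=v(\psi\otimes\phi)$. One checks $T_\phi(u_1\otimes w)=w(\phi)\,u_1$, so $T_\phi$ recovers $u_1$ up to the non-zero constant $w(\phi)$, and for $v$ a function $T_\phi v=\int_{\R^m}v(\cdot,y)\overline{\phi(y)}\,\rd y$. Minkowski's integral inequality together with H\"older's inequality give $\|T_\phi v\|_{L^p(\R^{n_1})}\le\|\phi\|_{L^{p'}(\R^m)}\|v\|_{L^p(\R^{n_1+m})}$; since $T_\phi$ commutes with differentiation in the first group of variables, it then maps $W^{k,p}(\R^{n_1+m})$ boundedly into $W^{k,p}(\R^{n_1})$, i.e.\ $H^{k,p}(\R^{n_1+m})$ boundedly into $H^{k,p}(\R^{n_1})$, for every $k\in\N_0$ (using $H^{k,p}=W^{k,p}$), which settles the case $\sigma\in\N_0$. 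For the remaining fractional orders with $p\ne2$ there are two options: either invoke directly the tensor-product estimates of Hansen \cite[Propositions~2.3.8 and 4.4.1]{HansenPhD} in the Triebel--Lizorkin scale, which yield $u_1\in H^{\sigma',p}(\R^{n_1})$ for $\sigma'<\sigma$; or extend the boundedness of $T_\phi$ to negative integer orders by duality (its adjoint being a tensoring-with-$\phi$ map, bounded $H^{k,p'}(\R^{n_1})\to H^{k,p'}(\R^{n_1+m})$ for $k\in\N_0$ by Proposition~\ref{prop:TensorPositive}) and then interpolate over integer orders by means of \eqref{eq:Interpolation}.

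\textbf{Main obstacle.} The hard part is this last point: controlling the partial contraction $T_\phi$ on the fractional-order spaces $H^{\sigma,p}$ for $p\ne2$, where no Plancherel identity is available, so that one must rely either on the dominating-mixed-smoothness machinery of \cite{HansenPhD} or on the interpolation argument above. Reaching the sharp endpoint $\sigma'=\sigma$, rather than merely $\sigma'<\sigma$, appears to genuinely require the extra structure present when $p=2$ (Plancherel) or $\sigma\in\N_0$ (the description of $H^{k,p}$ by weak derivatives, which lets $T_\phi$ be differentiated under the integral sign). A smaller but necessary chore is to verify, within the anti-linear-functional conventions the paper uses for distributions and their tensor products, that $w\ne0$ and that the contraction identity $T_\phi(u_1\otimes w)=w(\phi)\,u_1$ holds with $w(\phi)\ne0$.
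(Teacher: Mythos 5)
Your proposal is correct, and it overlaps substantially with the paper's proof while adding one genuinely different ingredient. The reduction to two factors, the use of Hansen's tensor-product embeddings \cite{HansenPhD} for the first (non-endpoint) assertion, and the $p=2$ argument via Plancherel restricted to a bounded positive-measure set where $|\hat w|\geq\eps$ are all essentially identical to what the paper does. (One small point in your $p=2$ step: the paper first deduces $u_2\in H^{s_2,2}$ from the general-$p$ argument, so that $\hat u_2\in L^2_{\rm loc}$ is automatic; you instead assert that local square-integrability of $\hat u_1\otimes\hat w$ forces $\hat w$ to be a function, which is true but needs the one-line argument of pairing against $\psi\otimes(\cdot)$ for a fixed $\psi$ with $\hat u_1(\psi)\neq0$ and applying Cauchy--Schwarz and Fubini.) Where you genuinely depart from the paper is the case $s\in\N_0$: the paper simply cites the $W^{k,p}$-norm characterisation from \cite{HansenPhD}, whereas your partial-contraction operator $T_\phi$, with the Minkowski/H\"older bound $\|T_\phi v\|_{L^p(\R^{n_1})}\le\|\phi\|_{L^{p'}}\|v\|_{L^p(\R^{n_1+m})}$ and commutation with $\partial_x^\alpha$, gives a self-contained elementary proof that recovering a factor from a tensor product cannot increase integer-order Sobolev regularity. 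This buys independence from the dominating-mixed-smoothness machinery at the cost of having to verify the contraction identity $T_\phi(u_1\otimes w)=w(\phi)\,u_1$ in the paper's anti-linear conventions, which you rightly flag.

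One remark on your sketched duality-plus-interpolation route: you should notice that it is in tension with your closing claim that the endpoint ``genuinely requires'' $p=2$ or $\sigma\in\N_0$. If $T_\phi$ is bounded on $H^{k,p}$ for all $k\in\N_0$ (your Minkowski argument) and on $H^{-k,p}$ by duality (using Proposition~\ref{prop:TensorPositive} at integer orders for the adjoint $u\mapsto u\otimes\phi$), then complex interpolation via \eqref{eq:Interpolation} between consecutive integers would make $T_\phi$ bounded on $H^{\sigma,p}$ for \emph{every} $\sigma\in\R$ and $1<p<\infty$, and the endpoint conclusion $u_1\otimes\cdots\otimes u_N\notin\Hsp$ would follow for all $p$ and $s$ --- strictly more than the proposition asserts. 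So either this route contains a flaw you have not identified, or you have proved a stronger statement than the one you set out to prove; as written you cannot have it both ways. This does not affect the correctness of your proof of the proposition as stated, since the first assertion is already covered by the Hansen route and the endpoint cases by the Plancherel and Minkowski arguments, but the interpolation sketch should either be carried out carefully (and the strengthening claimed) or dropped.
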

\begin{proof}
Propositions 2.3.8(ii) and 4.4.1 in \cite{HansenPhD} show that, given $s_j\in\N$, $j=1,\ldots,N$, 
$H^{t,p}(\R^{n_1+\cdots+n_N})$ $\subset H^{s_1,p}(\R^{n_1})\otimes_{\alpha_p}\cdots \otimes_{\alpha_p} H^{s_N,p}(\R^{n_N})$ 
(defined as the completion of the algebraic tensor product space under the norm $\alpha_p$ of \cite[Definition~1.3.1]{HansenPhD})
for $t>s^*$, where
\begin{align*}
s^*:=\min\big\{0,\max_{j=1,\ldots,N}s_j \big\}+\sum_{j=1}^N \max\{0,s_j\}
=\max_{\emptyset\ne \mathfrak J\subset\{1,\ldots,N\}}\sum_{j\in \mathfrak J} s_j
=\begin{cases}
\displaystyle
\max_{j=1,\ldots,N} s_j &\text{if }s_j\le 0 \; \forall j,\\[3mm]
\displaystyle
\sum_{j \text{ s.t.\ } s_j>0} s_j &\text{otherwise.}
\end{cases}
\end{align*}
To prove the assertion it suffices to consider the case $N=2$, in which case $s^*=\max\{s_1,s_2,s_1+s_2\}$.
Setting $s_1=s$, it follows that if $u_1\otimes u_2\in H^{t,p}(\R^{n_1+n_2})$ then $u_1\otimes u_2\in H^{s,p}(\R^{n_1})\otimes_{\alpha_p} H^{s_2,p}(\R^{n_2})$ for sufficiently small $s_2$ (i.e. $s_2\le\min\{0,s\}$, so that $s_*=s$). 
Since $\alpha_p$ is a so-called crossnorm \cite[equation~(1.3.2)]{HansenPhD}, this in turn implies 
that $u_2\in H^{s_2,p}(\R^{n_2})$, and more importantly $u_1\in \Hsp(\R^{n_1})$, which proves the assertion by contrapositive. 
The stronger result for the case $s\in\N_0$ comes from \cite[Proposition~2.3.8(i)]{HansenPhD}.

To prove the stronger assertion for the case $p=2$, we note that since $u_2\in H^{s_2,2}(\R^{n_2})$ it holds that $\hat u_2\in L^2_{loc}(\R^{n_2})$. 
Since $\hat u_2\neq 0$, there exist $c_0>0$ and a bounded measurable set $A\subset\R^{n_2}$ with positive measure $m(A)$ such that $|\hat u_2(\bxi_2)|^2\ge c_0>0$ for a.e.\ $\bxi_2\in A$.
Then, if $u_1\otimes u_2\in H^{s,2}(\R^{n_1+n_2})$, the Plancherel and Fubini theorems give the following contradiction:
\begin{align*}
\infty>\|u_1\otimes u_2\|^2_{H^{s,2}(\R^{n_1+n_2})}
&=\int_{\R^{n_1}}\int_{\R^{n_2}}|\hat u_1(\bxi_1)|^2|\hat u_2(\bxi_2)|^2(1+|\bxi_1|^2+|\bxi_2|^2)^s 
\rd\bxi_2\rd\bxi_1\\
&\ge
c_0\int_{\R^{n_1}}\int_{A}|\hat u_1(\bxi_1)|^2(1+|\bxi_1|^2+|\bxi_2|^2)^s \rd\bxi_2\rd\bxi_1\\
&\ge\left\{
\begin{aligned}
&c_0\int_{\R^{n_1}}\int_{A}|\hat u_1(\bxi_1)|^2(1+|\bxi_1|^2)^s \rd\bxi_2\rd\bxi_1,
&& s\ge0,\\
&c_0\int_{\R^{n_1}}|\hat u_1(\bxi_1)|^2(1+|\bxi_1|^2)^s \rd\bxi_1\int_{A}(1+|\bxi_2|^2)^s\rd\bxi_2,
\qquad&&s<0,
\end{aligned}\right.\\
&\ge\left\{
\begin{aligned}
&c_0\;m(A)\;\|u_1\|^2_{H^{s,2}(\R^{n_1})}
&\qquad=\infty,
\qquad&\; s\ge0,\\
&c_0 \;m(A)\; \Big(1+\sup_{\bxi_2\in A}|\bxi_2|^2\Big)^s 
\|u_1\|^2_{H^{s,2}(\R^{n_1})}
&\qquad=\infty,
\qquad& \;s<0.
\end{aligned}\right.
\end{align*}
\end{proof}
To better interpret these results, we define the ``maximal Sobolev regularity'' of a distribution:
$$ m_{p,n}(u):=\sup\big\{s\in \R,\text{ such that } u\in \Hsp\Rn\big\}\in \R\cup\{\pm\infty\}, \quad u\in\cS^*\Rn,\; 1<p<\infty,\;n\in\N.$$
Then, Propositions~\ref{prop:TensorPositive} and \ref{prop:TensorNegative} combine to give a precise characterisation of the maximal Sobolev regularity of a tensor-product distribution if the maximal Sobolev regularity of at least one of the two factors is non-negative:
\begin{align*}
&n_j\in\N, \quad 
\max\big\{m_{p,n_1}(u_1),m_{p,n_2}(u_2)\big\}\ge 0
\quad\Rightarrow
\quad
m_{p,n_1+n_2}(u_1\otimes u_2) = \min\big\{m_{p,n_1}(u_1),m_{p,n_2}(u_2)\big\}.
\end{align*}
Moreover, if $p=2$ or $m_{p,n_1+n_2}(u_1\otimes u_2)\in \N_0$, then $u_1\otimes u_2$ belongs to $H^{m_{p,n_1+n_2}(u_1\otimes u_2),p}(\R^{n_1+n_2})$ if and only if  
$u_1 \in H^{m_{p,n_1+n_2}(u_1\otimes u_2),p}(\R^{n_1})$ and $u_2 \in H^{m_{p,n_1+n_2}(u_1\otimes u_2),p}(\R^{n_2})$. 
A similar statement holds for the tensor product of $N$ distributions, if all except at most one have non-negative maximal Sobolev regularity.
If both $u_1$ and $u_2$ have negative maximal Sobolev regularity, the result is less sharp:
\begin{align*}
&n_j\in\N, \quad 
\max\big\{m_{p,n_1}(u_1),m_{p,n_2}(u_2)\big\}\le 0\quad\\
&\Rightarrow\quad
m_{p,n_1}(u_1)+m_{p,n_2}(u_2)\;\le\;
m_{p,n_1+n_2}(u_1\otimes u_2)\;\le\; \min\big\{m_{p,n_1}(u_1),m_{p,n_2}(u_2)\big\}.
\end{align*}
We point out that the lower bound here can be achieved. 
Indeed, if $\bx_j\in\R^{n_j}$, $j=1,2$, then by \eqref{eq:delta} $m_{p,n_j}(\delta_{\bx_j})=-n_j/p'$, $j=1,2$, and $m_{p,n_1+n_2}(\delta_\bx\otimes\delta_\by)=-(n_1+n_2)/p'=m_{p,n_1}(\delta_\bx)+m_{p,n_2}(\delta_\by)$.

So far we have related distributions defined on Euclidean spaces with different dimensions using tensor products. 
To relate functions (with positive regularity exponent $s$) defined on an ambient Euclidean space and on affine subspaces one can use \emph{traces}.
Using classical results on traces in Triebel--Lizorkin spaces we can prove the following result. 
\begin{prop}\label{prop:Trace}
Let $n_1,n_2\in\N$,  $1<p<\infty$, $s\in\R$, $E_1\subset\R^{n_1}$ and $E_2\subset \R^{n_2}$. If $E_1$ is $(s,p)$-null then
$E_1\times E_2$ is $(t,p)$-null for 
$$\begin{cases}
t\ge s+\frac{n_2}p & \text{ if } s>0,\; 1<p\le2,\\
t>s+\frac{n_2}p & \text{ if } s>0,\;2<p<\infty,\\
t\ge s & \text{ if } s=0,\\
t>s & \text{ if } s<0 \text{ and $E_1,E_2$ Borel}.
\end{cases}$$
\end{prop}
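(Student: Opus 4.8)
The plan is to prove the contrapositive, so suppose $E_1$ is $(s,p)$-null but $E_1\times E_2$ is not $(t,p)$-null; the goal is a contradiction. By Remark~\ref{rem:Compact} I may fix $0\ne u\in H^{t,p}(\R^{n_1+n_2})$ with $F:=\supp u$ compact and $F\subset E_1\times E_2$, and pass to the compact coordinate projections $K_1:=\pi_1(F)\subset E_1$ and $K_2:=\pi_2(F)\subset E_2$, so that $F\subset K_1\times K_2$. Since $K_1$ is a closed subset of $E_1$ it is itself $(s,p)$-null (Lemma~\ref{lem:nullity1}\rf{aa}), and $K_1\ne\emptyset$ because $u\ne0$, so by Lemma~\ref{lem:nullity1}\rf{jj} we have $s\ge-n_1/p'$. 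The four cases of the statement then split naturally into $s\le0$ (elementary measure- and dimension-theoretic arguments) and $s>0$ (a trace/slicing argument).

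For $s\le0$ and $t\ge0$ (the only possibility when $s=0$), I would use Lemma~\ref{lem:nullity1}\rf{bb} to see that $E_1$, hence $K_1$, is $(0,p)$-null, so $m(K_1)=0$ by Lemma~\ref{lem:nullity1}\rf{qq}; then $m(K_1\times K_2)=0$, so $K_1\times K_2$ is $(0,p)$-null and hence $(t,p)$-null by Lemma~\ref{lem:nullity1}\rf{bb}, contradicting $0\ne u\in H^{t,p}_{K_1\times K_2}$. For $s<0$ and $s<t<0$ I would instead invoke Hausdorff dimension: $K_1$ is compact (hence Borel) and $(s,p)$-null with $-n_1/p'\le s<0$, so $\dimH K_1\le n_1+p's$ by Theorem~\ref{thm:NullityHausdorff}\rf{gg}, and combining with the standard product bound $\dimH(K_1\times K_2)\le\dimH K_1+\overline{\dimB}(K_2)\le\dimH K_1+n_2$ (see \cite[Chapter~7]{Fal}) gives
\[
\dimH(K_1\times K_2)\le n_1+n_2+p's<n_1+n_2+p't .
\]
Since $-(n_1+n_2)/p'<s<t\le0$, Theorem~\ref{thm:NullityHausdorff}\rf{hh} then shows $K_1\times K_2$ is $(t,p)$-null, again contradicting the existence of $u$.

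For $s>0$ the starting point is that $t\ge s+n_2/p>n_2/p$, so the classical trace theorem for Triebel--Lizorkin spaces (e.g.\ \cite[\S2.7.2]{Triebel83ThFS}) applies: for each $\by_0\in\R^{n_2}$ the map $\Tr_{\by_0}\colon u\mapsto u(\cdot,\by_0)$ extends to a bounded operator from $H^{t,p}(\R^{n_1+n_2})=F^t_{p2}$ into $B^{t-n_2/p}_{pp}(\R^{n_1})$, with norm independent of $\by_0$ by translation invariance. I would then use two facts. First, $\Tr_{\by_0}$ is local, so $\supp(\Tr_{\by_0}u)\subset\pi_1(F)=K_1\subset E_1$ for every $\by_0$. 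Second, $u$ is determined by its traces: approximating $u$ by $u_j\in\scrD$ in $H^{t,p}$ and passing to the limit in $\langle u_j,\phi\otimes\psi\rangle=\int_{\R^{n_2}}\langle\Tr_{\by}u_j,\phi\rangle\,\overline{\psi(\by)}\,\rd\by$ (using the uniform trace bound) yields the same identity for $u$, so that $u\ne0$ forces $v:=\Tr_{\by_0}u\ne0$ for some $\by_0$. Then $0\ne v\in B^{t-n_2/p}_{pp}(\R^{n_1})$ with $\supp v\subset E_1$, and a Besov--Bessel embedding (see \cite[\S2.3.2, \S2.7.1]{Triebel83ThFS}) finishes the argument: for $1<p\le2$ and $t\ge s+n_2/p$, $B^{t-n_2/p}_{pp}\subset B^s_{pp}=F^s_{pp}\subset F^s_{p2}=H^{s,p}$, while for $2<p<\infty$ and $t>s+n_2/p$, picking $s<\sigma<t-n_2/p$ gives $B^{t-n_2/p}_{pp}\subset B^\sigma_{pp}\subset H^{s,p}$. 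In both cases $v$ is a nonzero element of $H^{s,p}(\R^{n_1})$ supported in a closed subset of $E_1$, contradicting the $(s,p)$-nullity of $E_1$.

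The main obstacle will be the case $s>0$: I need the \emph{sharp} trace space $B^{t-n_2/p}_{pp}$ (a cruder trace estimate would not deliver the endpoint), together with the precise inclusions between $B^\sigma_{pp}$ and $H^{s,p}=F^s_{p2}$ --- it is exactly the failure of $B^s_{pp}\subset F^s_{p2}$ for $p>2$ that forces the strict inequality $t>s+n_2/p$ there. A secondary point requiring care is the claim that some slice $\Tr_{\by_0}u$ is genuinely nonzero, which does not follow from boundedness of the trace alone but from the density of tensor products in $\scrD(\R^{n_1+n_2})$ combined with the uniform trace bound.
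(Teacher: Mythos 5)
Your proof is correct and follows essentially the same route as the paper's: Lebesgue measure for $s=0$, Hausdorff dimension of products for $s<0$, and the Triebel--Lizorkin trace theorem for $s>0$ (the paper cites \S2.7.2 and \S2.3.2 of Triebel for exactly the trace-into-$B^{t-n_2/p}_{pp}$ plus Besov--Bessel embedding step you spell out, which is what produces the $p\le 2$ versus $p>2$ dichotomy). The only differences are elaborations rather than a different argument: you justify that some slice is nonzero via tensor-product test functions and uniform trace bounds where the paper invokes Fubini on $u\in L^p$, and you establish locality of the trace directly where the paper uses mollifiers.
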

\begin{proof}
The case $s=0$ follows from Lemma~\ref{lem:nullity1}\rf{qq}, while the case $s<0$ can easily be deduced from the relation between nullity and Hausdorff dimension 
in \eqref{eq:DimHCharac} and the inequality $\dimH(E_1\times E_2)\le \dimH(E_1)+n_2$ \cite[equation~(7.7)]{Fal}.

For the case $s>0$, for any $\by\in\R^{n_2}$, \S2.7.2 and \S2.3.2 of \cite{Triebel83ThFS} give continuity of the trace operator 
$$\Tr_\by: H^{t,p} (\Rnn)\to H^{s,p}(\R^{n_1}), \qquad s>0,\; 
\begin{cases}
t\ge s+\frac{n_2}p & \text{ if } 1<p\le2,\\ 
t> s+\frac{n_2}p & \text{ if } 2<p<\infty,
\end{cases}
$$
defined on $\cD(\Rnn)$ as the pointwise trace onto the subspace $\{(\bx_1,\bx_2)\in\Rnn,\bx_2=\by\}$ (which is canonically identified with $\R^{n_1}$), and then extended to $H^{t,p}(\Rnn)$ by density. 
If $0\ne u\in H^{t,p}(\Rnn)\subset L^p(\Rnn)$, by Fubini's theorem there exists at least one $\by\in\R^{n_2}$ such that $\Tr_\by (u)\ne 0$.
If moreover $\supp u\subset E_1\times E_2$, by convolution with a sequence of mollifiers in $\cD(\Rnn)$ with decreasing support it is straightforward to show that $\supp(\Tr_\by(u))\subset E_1$.
This proves the assertion by contrapositive.
\end{proof}

\section{A result on the non-equality of capacities}
\label{app:CapacityEquivalence}
In this appendix 
(due to Simon Chandler-Wilde) we give a concrete example of an open set $\Omega\subset \R$ for which $\ccap_{2,2}(\Omega)< \cCap_{2,2}(\Omega)$. 
Specifically, we consider an open interval $\Omega=(-a,a)\subset \R$, where $a>0$. 
We recall that 
the space $H^{2,2}(\Omega)\subset \scrD^*(\Omega)$ is defined by 
\begin{align*}
H^{2,2}(\Omega) &:=\big\{u\in \scrD^*(\Omega): u=U|_\Omega \textrm{ for some } U\in H^{2,2}\big\},\\
\|u\|_{H^{2,2}(\Omega)} &:= \inf_{\substack{U\in H^{2,2}\\ U|_\Omega=u}} \|U\|_{H^{2,2}}.
\end{align*}
It is straightforward to show that 
\begin{align}
\label{capHsA}
\ccap_{2,2}(\Omega) &= \inf\{\|u\|_{H^{2,2}(\Omega)}^2:u\in H^{2,2}(\Omega), u\geq 1 \textrm{ a.e.\ on } \Omega\},\\
\label{CapHsA}
\cCap_{2,2}(\Omega) &= \inf\{\|u\|_{H^{2,2}(\Omega)}^2:u\in H^{2,2}(\Omega), u=1 \textrm{ a.e.\ on } \Omega\} =\|1\|_{H^{2,2}(\Omega)}^2.
\end{align}

An explicit formula for $\|u\|_{H^{2,2}(\Omega)}$ in the case where $\Omega$ is an open interval has been given recently in \cite[Equation (26)]{InterpolationCWHM}. For even functions $u\in H^{2,2}(\Omega)$ this formula gives (note that we correct a typographical error in \cite[Equation (26)]{InterpolationCWHM}, replacing $-\phi'(a)$ in that formula by $+\phi'(a)$)
\begin{align}
\label{eqn:H2Explicit}
\|u\|_{H^{2,2}(\Omega)}^2 = 2\left( |u(a)|^2 + |u'(a)|^2 + |u(a)+u'(a)|^2 + \int_0^a (|u(t)|^2 + 2|u'(t)|^2+ |u''(t)|^2) \, \rd t\right).
\end{align}
From \rf{eqn:H2Explicit} and \rf{CapHsA} it follows that $\cCap_{2,2}(\Omega)=\|1\|_{H^{2,2}(\Omega)}^2 = 4+2a$. But using \rf{eqn:H2Explicit} we can for any $a>0$ construct a function $u\in H^{2,2}(\Omega)$ which satisfies $u\geq 1$ on $\Omega$ and has $\|u\|_{H^{2,2}(\Omega)}^2 < 4+2a$. This, in the light of \rf{capHsA}, demonstrates that $\ccap_{2,2}(\Omega)<\cCap_{2,2}(\Omega)$ for this particular $\Omega$. 

When $a<\sqrt{3}$ we consider the quadratic function $u(t)=1+\eps(a^2-t^2)$, for some $\eps>0$ to be specified. By \rf{eqn:H2Explicit} this function satisfies
\begin{align*}
\|u\|_{H^{2,2}(\Omega)}^2 = 4+ 2a - 8a\left(1-\frac{a^2}{3}\right)\eps + \ord{\eps^2}, \qquad \eps\to 0,
\end{align*}
so that $\|u\|_{H^{2,2}(\Omega)}^2 < 4+2a$ for sufficiently small $\eps$, provided $a<\sqrt{3}$.

When $a>1$ we consider the function (again with $\eps>0$ to be specified)
\begin{align*}
u(t)=
\begin{cases}
1, & |t|\leq a-1,\\
1+\eps (a-|t|)(a-1-|t|)^2, & a-1\leq |t|<a,
\end{cases}
\end{align*}
which by \rf{eqn:H2Explicit} satisfies
\[ \|u\|_{H^{2,2}(\Omega)}^2=4+2a-\frac{11}3 
\eps + \ord{\eps^2}, \qquad \eps\to 0, \]
and again we have $\|u\|_{H^{2,2}(\Omega)}^2 < 4+2a$ for sufficiently small $\eps$.


\begin{thebibliography}{10}

\bibitem{Ad:74}
{\sc D.~R. Adams}, {\em On the exceptional sets for spaces of potentials},
  Pacific J. Math., 52 (1974), pp.~1--5.

\bibitem{AdHe}
{\sc D.~R. Adams and L.~I. Hedberg}, {\em Function Spaces and Potential
  Theory}, Springer, 1999.
\newblock corrected 2nd printing.

\bibitem{AdPo:73}
{\sc D.~R. Adams and J.~C. Polking}, {\em The equivalence of two definitions of
  capacity}, Proc. Am. Math. Soc., 37 (1973), pp.~529--534.

\bibitem{AliBur}
{\sc C.~D. Aliprantis and O.~Burkinshaw}, {\em Principles of {R}eal
  {A}nalysis}, Academic Press, Inc., San Diego, CA, third~ed., 1998.

\bibitem{BBT97}
{\sc A.~M. Bruckner, J.~B. Bruckner, and B.~S. Thomson}, {\em Real Analysis},
  Prentice Hall (Pearson), 1997.

\bibitem{Ca:67}
{\sc L.~Carleson}, {\em Selected Problems on Exceptional Sets}, D. van Nostrand
  Company, Princeton, 1967.

\bibitem{Ch:13}
{\sc S.~N. Chandler-Wilde}, {\em Scattering by arbitrary planar screens}, in
  Computational Electromagnetism and Acoustics, Oberwolfach Report No. 03/2013,
  DOI: 10.4171/OWR/2013/03, 2013, pp.~154--157.

\bibitem{ScreenPaper}
{\sc S.~N. Chandler-Wilde and D.~P. Hewett}, {\em Well-posed PDE and integral equation formulations for scattering by fractal screens}, arXiv preprint (2016), arXiv:1611.09539. 

\bibitem{CoercScreen}
\leavevmode\vrule height 2pt depth -1.6pt width 23pt, {\em Acoustic scattering
  by fractal screens: mathematical formulations and wavenumber-explicit
  continuity and coercivity estimates}.
\newblock Technical report (2013). University of Reading preprint MPS-2013-17.

\bibitem{CoercScreen2}
\leavevmode\vrule height 2pt depth -1.6pt width 23pt, {\em Wavenumber-explicit
  continuity and coercivity estimates in acoustic scattering by planar
  screens}, Integr. Equat. Operat. Th., 82 (2015), pp.~423--449.

\bibitem{InterpolationCWHM}
{\sc S.~N. Chandler-Wilde, D.~P. Hewett, and A.~Moiola}, {\em Interpolation of
  {H}ilbert and {S}obolev spaces: quantitative estimates and counterexamples},
  Mathematika, 61 (2015), pp.~414--443.

\bibitem{ChaHewMoi:13}
\leavevmode\vrule height 2pt depth -1.6pt width 23pt, {\em Sobolev spaces on
  non-{L}ipschitz subsets of $\mathbb{R}^n$ with application to boundary
  integral equations on fractal screens}, arXiv preprint (2016), arXiv:1607.01994.

\bibitem{ChPi}
{\sc J.~Chazarain and A.~Piriou}, {\em Introduction to the Theory of Linear
  Partial Differential Equations}, North-Holland, 1982.

\bibitem{Ch:54}
{\sc G.~Choquet}, {\em Theory of capacities}, Annales de l’institut Fourier,
  5 (1954), pp.~131--295.

\bibitem{ClHi:13}
{\sc X.~Claeys and R.~Hiptmair}, {\em Integral equations on multi-screens},
  Integr. Equat. Oper. Th., 77 (2013), pp.~167--197.

\bibitem{De:50}
{\sc J.~Deny}, {\em Les potentiels d'\'energie finie}, Acta Math., 82 (1950),
  pp.~107--183.

\bibitem{DiMartinoUrbina}
{\sc R.~{DiMartino} and W.~{Urbina}}, {\em {On Cantor-like sets and
  Cantor-Lebesgue singular functions}}, arXiv preprint (2014), arXiv:1403.6554.

\bibitem{Fal}
{\sc K.~Falconer}, {\em Fractal Geometry: Mathematical Foundations and
  Applications}, Wiley, 3rd ed., 2014.

\bibitem{Fr:79}
{\sc L.~E. Fraenkel}, {\em On regularity of the boundary in the theory of
  {S}obolev spaces}, Proc. London Math. Soc., 3 (1979), pp.~385--427.

\bibitem{Gri}
{\sc P.~Grisvard}, {\em Elliptic Problems in Nonsmooth Domains}, SIAM Classics in Applied Mathematics, 2011.

\bibitem{Gr:62}
{\sc V.~V. Grusin}, {\em A problem in the entire space for a certain class of
  partial differential equations}, Dokl. Akad. Nauk SSSR, 146 (1962),
  pp.~1251--1254.

\bibitem{HansenPhD}
{\sc M.~Hansen}, {\em Nonlinear Approximation and Function Spaces of Dominating
  Mixed Smoothness}, PhD thesis, Fakult\"at f\"ur Mathematik und Informatik der
  Friedrich-Schiller-Universit\"at Jena, 2010.
\newblock Available at
  \url{http://suche.thulb.uni-jena.de/vufind/Record/644443235}.

\bibitem{Hs0paper}
{\sc D.~P. Hewett and A.~Moiola}, {\em A note on properties of the restriction
  operator on {S}obolev spaces}, arXiv preprint (2016), arXiv:1607.01741.

\bibitem{HormanderI90}
{\sc L.~H{\"o}rmander}, {\em The Analysis of Linear Partial Differential
  Operators {I: Distribution Theory and Fourier Analysis}}, Springer-Verlag,
  Berlin, second~ed., 1990.

\bibitem{HoLi:56}
{\sc L.~H\"ormander and J.~L. Lions}, {\em Sur la compl\'etion par rapport \`a
  une integrate de {D}irichlet}, Math. Scand., 4 (1956), pp.~259--270.

\bibitem{La:11}
{\sc W.~Lawton}, {\em The {F}eichtinger conjecture for exponentials}, J.
  Nonlinear Anal. Optimiz., 2 (2011), pp.~123--132.

\bibitem{LiMaI}
{\sc J.-L. Lions and E.~Magenes}, {\em Non-Homogeneous Boundary Value Problems
  and Applications I}, Springer-Verlag, 1972.

\bibitem{Li:67b}
{\sc W.~Littman}, {\em A connection between $\alpha$-capacity and $m-p$
  polarity}, Bull. Am. Math. Soc., 73 (1967), pp.~862--866.

\bibitem{Li:67a}
\leavevmode\vrule height 2pt depth -1.6pt width 23pt, {\em Polar sets and
  removable singularities of partial differential equations}, Ark. Mat., 7
  (1967), pp.~1--9.

\bibitem{Maz'ya}
{\sc V.~G. Maz'ya}, {\em Sobolev Spaces with Applications to Elliptic Partial
  Differential Equations}, Springer, 2nd~ed., 2011.

\bibitem{MazyaHavin72}
{\sc V.~G. Maz'ya and V.~P. Havin}, {\em Nonlinear potential theory}, Uspehi
  Mat. Nauk, 27 (1972), pp.~67--138.
\newblock English translation: Russian Math.\ Surveys 27 (1972), pp.\ 71--148.

\bibitem{McLean}
{\sc W.~McLean}, {\em Strongly Elliptic Systems and Boundary Integral
  Equations}, CUP, 2000.

\bibitem{Me:70}
{\sc N.~G. Meyers}, {\em A theory of capacities for potentials of functions in
  {L}ebesgue classes.}, Math. Scand., 26 (1970), pp.~255--292.

\bibitem{Po:72}
{\sc J.~C. Polking}, {\em Approximation in {$L^p$} by solutions of elliptic
  partial differential equations}, Am. J. Math., 94 (1972), pp.~1231--1244.

\bibitem{Po:72a}
\leavevmode\vrule height 2pt depth -1.6pt width 23pt, {\em Leibniz formula for
  some differentiation operators of fractional order}, Indiana U. Math. J., 21
  (1972), pp.~1019--1029.

\bibitem{Stein}
{\sc E.~M. Stein}, {\em Singular Integrals and Differentiability Properties of
  Functions}, Princeton University Press, 1970.

\bibitem{Triebel78ITFSDO}
{\sc H.~Triebel}, {\em Interpolation Theory, Function Spaces, Differential
  Operators}, North Holland, 1978.

\bibitem{Triebel83ThFS}
\leavevmode\vrule height 2pt depth -1.6pt width 23pt, {\em Theory of Function
  Spaces}, Birkh{\"a}user Verlag, 1983.

\bibitem{Triebel97FracSpec}
\leavevmode\vrule height 2pt depth -1.6pt width 23pt, {\em Fractals and
  Spectra}, Birkh{\"a}user Verlag, 1997.

\bibitem{Tri:08}
\leavevmode\vrule height 2pt depth -1.6pt width 23pt, {\em The dichotomy
  between traces on $d$-sets ${\Gamma}$ in $\mathbb{R}^n$ and the density of
  ${D}(\mathbb{R}^n\setminus{\Gamma}$) in function spaces}, Acta Math. Sin., 24
  (2008), pp.~539--554.

\bibitem{Ziemer}
{\sc W.~P. Ziemer}, {\em Weakly Differentiable Functions: Sobolev Spaces and
  Functions of Bounded Variation}, vol.~120 of Graduate Texts in Mathematics,
  Springer, 1989.

\end{thebibliography}

\end{document}